\documentclass[12pt]{article}
\usepackage{amsmath,amssymb,amsfonts,amsthm}
%\thispagestyle{empty}
%\pagestyle{empty}
%\pagenumbering{}
\setlength{\parindent}{18pt}
\textwidth14cm
\textheight21.6cm
\newcounter{item}[section]
\newcounter{kirshr}
\newcounter{kirsha}
\newcounter{kirshb}
\newenvironment{enumroman}{\setcounter{kirshr}{1}
\begin{list}{(\roman{kirshr})}{\usecounter{kirshr}} }{\end{list}}
\newenvironment{enumarab}{\setcounter{kirshb}{1}
\begin{list}{(\arabic{kirshb})}{\usecounter{kirshb}} }{\end{list}}
\newtheorem{theorem}{Theorem}[section]

\newtheorem{lemma}[theorem]{Lemma}
\newtheorem{corollary}[theorem]{Corollary}
\newenvironment{demo}[1]{\noindent{\bf #1.}\upshape\mdseries}
{\nopagebreak{\hfill\rule{2mm}{2mm}\nopagebreak}\par\normalfont}
\theoremstyle{definition}

\newtheorem{example}[theorem]{Example}
\newtheorem{definition}[theorem]{Definition}
\def\R{\mathbb{R}}

\def\C{{\mathfrak{C}}}
\def\Fm{{\mathfrak{Fm}}}

\def\Nr{{\mathfrak{Nr}}}

\def\Sg{{\mathfrak{Sg}}}
\def\Fm{{\mathfrak{Fm}}}

\def\RCA{{\sf RCA}}
\def\RPEA{{\sf RPEA}}
\def\A{{\mathfrak{A}}}
\def\B{{\mathfrak{B}}}
\def\C{{\mathfrak{C}}}
\def\D{{\mathfrak{D}}}
\def\M{{\mathfrak{M}}}

\def\Sn{{\mathfrak{Sn}}}
\def\Bl{{\mathfrak{Bl}}}
\def\CA{{\sf CA}}

\def\SC{{\bf SC}}
\def\QEA{{\bf QEA}}

\def\Df{{\sf Df}}

\def\Dc{{\bf Dc}}

\def\K{{\bf K}}
\def\K{{\bf K}}

\def\RCA{{\sf RCA}}

\def\Rd{{\ Rd}}
\def\(R)RA{{\bf (R)RA}}
\def\RA{{\bf RA}}
\def\RRA{{\bf RRA}}
\def\Dc{{\bf Dc}}
\def\R{\mathbb{R}}
\def\F{{\sf F}}
\def\Dc{{\bf Dc}}

\def\Rl{{\sf Rl}}
\def\c #1{{\cal #1}}
 \def\CA{{\sf CA}}
\def\B{{\sf B}}
\def\G{{\sf G}}

\def\K{{\sf K}}
\def\tp{{\sf tp}}
 \def\Cm{{\mathfrak{Cm}}}
\def\Nr{{\mathfrak{Nr}}}

\def\restr #1{{\restriction_{#1}}}
\def\cyl#1{{\sf c}_{#1}}
\def\diag#1#2{{\sf d}_{#1#2}}

\def\Ra{{\mathfrak{Ra}}}
\def\Ca{{\mathfrak{Ca}}}
\def\set#1{\{#1\} }
\def\Ra{{\mathfrak{Ra}}}
\def\Nr{{\mathfrak{Nr}}}
\def\Tm{{\mathfrak{Tm}}}
\def\A{{\mathfrak{A}}}
\def\B{{\mathfrak{B}}}
\def\C{{\mathfrak{C}}}
\def\D{{\mathfrak{D}}}
\def\E{{\mathfrak{E}}}
\def\A{{\mathfrak{A}}}
\def\B{{\mathfrak{B}}}
\def\C{{\mathfrak{C}}}
\def\D{{\mathfrak{D}}}
\def\E{{\mathfrak{E}}}

\def\P{{\mathfrak{P}}}

\def\Bb{{\mathfrak{Bb}}}
\def\L{{\mathfrak{L}}}
\def\PEA{{\mathbf{PEA}}}
\def\PA{{\mathbf{PA}}}
\def\Bb{{\mathfrak{Bb}}}
\def\L{{\mathfrak{L}}}
\def\CA{{\sf CA}}
\def\RA{{\sf RA}}
\def\RRA{{\sf RRA}}
\def\RCA{{\sf RCA}}

\def\G{{\bf G}}

\def\At{{\sf At}}

\def\Ra{{\sf Ra}}
\def\R{{\sf R}}
\def\Rd{{\sf Rd}}

\def\Sn{{\sf Sn}}
\def\pa{$\forall$}

\def\ws{winning strategy}

%not to be confused with ``bold''

\def\PA{{\sf PA}}
\def\PEA{{\sf PEA}}
 \def\RPEA{{\sf RPEA}}
\def\QEA{{\sf QEA}}
\def\SC{{\sf SC}}
\def\Cof{{\sf Cof}}
\def\pe{$\exists$}

\title{Strongly representable algebras}
\author{Tarek Sayed Ahmed}
\begin{document}
\maketitle

\begin{abstract} We give a simpler proof of a result of Hodkinson in the context of a blow and blur up construction
argueing that the idea at heart is similar to that adopted by Andr\'eka et all \cite{sayed}.
The idea is to blow up a finite structure, replacing each 'colour or atom' by infinitely many, using blurs
to  represent the resulting term algebra, but the blurs are not enough to blur the structure of the finite structure in the complex algebra. 
Then, the latter cannot be representable due to a {\it finite- infinite} contradiction. 
This structure can be a finite clique in a graph or a finite relation algebra or a finite 
cylindric algebra. This theme gives example of weakly representable atom structures tthat are not strongly
representable. This is the essence too of construction of Monk like-algebras, one constructs graphs with finite colouring (finitely many blurs),
converging to one with infinitely many, so that the original algebra is also blurred at the complex algebra level, 
and the term algebra is completey representable, yielding a representation of its completion the 
complex algebra. 

A reverse of this process exists in the literature, it builds algebras with infinite blurs converging to one with finite blurs. This idea due to 
Hirsch and Hodkinson, uses probabilistic methods of Erdos to construct a  sequence of graphs with infinite  chromatic 
number one that is $2$ colourable. This construction, which works for both relation and cylindric algebras,
further shows that the class of strongly representable atom structures
is not elementary. We will generalize such a result for any class of algebras between diagonal free algebras 
and polyadic algebras with and without equality,
then we further discuss possibilities for the infinite dimensional case.

Finally, we suggest a very plausible equivalence, and that is:
If $n>2$, is finite, and $\A\in \CA_n$ is countable and atomic, then $\Cm\At\A$ is representable if and only if $\A\in \Nr_n\CA_{\omega}$.
We could prove one side.
\end{abstract}

\section{Variations on a theme}

Unless otherwise specified $n$ will be a finite ordinal $>1$. 
\begin{definition}
\begin{enumarab} 
\item Let $1\leq k\leq \omega$. Call an atom structure $\alpha$ {\it weakly $k$ neat representable}, 
if the term algebra is in $\RCA_n\cap \Nr_n\CA_{n+k}$, but the complex algebra is not representable.
\item Call an atom structure {\it neat} if there is an atomic algebra $\A$, such that $\At\A=\alpha$, $\A\in \Nr_n\CA_{\omega}$ and for every algebra
$\B$ based on this atom structure there exists $k\in \omega$, $k\geq 1$, such that $\B\in \Nr_n\CA_{n+k}$.
\item Let $k\leq \omega$. Call an atom structure $\alpha$ {\it $k$ complete}, 
if there exists $\A$ such that $\At\A=\alpha$ and $\A\in S_c\Nr_n\CA_{n+k}$.
\item Let $k\leq \omega$. Call an atom structure $\alpha$ {\it $k$ neat} if there exists $\A$ such that $\At\A=\alpha$, 
and $\A\in \Nr_n\CA_{n+k}.$ 
\end{enumarab}
\end{definition}
\begin{definition} Let $\K\subseteq \CA_n$, and $\L$ be an extension of first order logic. 
$\K$ is {\it detectable} in $\L$, if for any $\A\in \K$, $\A$ atomic, and for any atom structure 
$\beta$ such that $\At\A\equiv_{\L}\beta$,
if $\B$ is an atomic algebra such that $\At\B=\beta$, then $\B\in \K.$
\end{definition}
We investigate the existence of such structures, and the interconnections. Note that if $\L_1$ is weaker than $\L_2$ and $\K$ is not detectable
in $\L_2$, then it is not detectable in $\L_1$.
We also present several $\K$s and $\L$s as in the second definition. 
All our results extend to Pinter's algebras and quasi polyadic algebras with and without equality.
But first another definition. 

\begin{theorem} 
An atomic algebra $\A\in \RCA_n$ is strongly representable if $\Cm\At\A\in \RCA_n$.
\end{theorem}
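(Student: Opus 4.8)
The plan is to derive the statement from two basic facts about atomic algebras: (a) an atomic algebra $\B\in\CA_n$ is, up to isomorphism, a subalgebra of the complex algebra $\Cm\At\B$ of its own atom structure, via the map $\iota_\B\colon b\mapsto\{x\in\At\B:x\leq b\}$; and (b) $\RCA_n$ is closed under subalgebras. Here ``$\A$ is strongly representable'' is read, as usual, as ``$\At\A$ is a strongly representable atom structure'', i.e.\ ``every atomic $\B\in\CA_n$ with $\At\B=\At\A$ lies in $\RCA_n$''; so the work is to pass from the single hypothesis $\Cm\At\A\in\RCA_n$ to this statement about all such $\B$.

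First I would check that $\iota_\B$ is a $\CA_n$-embedding: injectivity is exactly atomicity, the Boolean clauses are routine, and the one substantive point is that $\iota_\B$ commutes with the cylindrifications. This uses complete additivity of the $\cyl i$ in $\CA_n$: for an atom $x$ and $b\in\B$ one has $x\leq\cyl i^{\B}b$ iff $x\leq\cyl i^{\B}y$ for some atom $y\leq b$, and the latter is precisely the condition defining membership of $x$ in $\cyl i^{\Cm\At\B}\iota_\B(b)$; the diagonals are constants and require nothing. It follows that for any $\CA_n$ atom structure $\beta$, every atomic $\CA_n$ with atom structure $\beta$ is isomorphic to a subalgebra of $\Cm\beta$ lying above the term algebra $\Tm\beta$. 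Now suppose $\A$ is atomic with $\Cm\At\A\in\RCA_n$, and let $\B$ be an arbitrary atomic algebra with $\At\B=\At\A$. Then $\B$ embeds into $\Cm\At\B=\Cm\At\A\in\RCA_n$, and since $\RCA_n$, being the class of subalgebras of products of cylindric set algebras, is closed under subalgebras, $\B\in\RCA_n$. As $\B$ was arbitrary, $\At\A$ is strongly representable, hence so is $\A$. The converse is immediate, since $\Cm\At\A$ is itself an algebra based on $\At\A$, and the hypothesis $\A\in\RCA_n$ is in fact redundant, as $\iota_\A$ embeds $\A$ into $\Cm\At\A$; thus the ``if'' can be strengthened to an ``if and only if''.

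I do not expect a real obstacle here: the argument rests only on the complete-additivity embedding of an atomic algebra into the complex algebra of its atom structure, and on the evident closure of $\RCA_n$ under subalgebras. If anything deserves care it is the verification in the first step that $\iota_\B$ respects the $\cyl i$, since this is exactly where atomicity of $\B$ (each element being the join of the atoms below it) and complete additivity of the cylindric operations are used; drop either hypothesis and the embedding claim — and with it the theorem — breaks down.
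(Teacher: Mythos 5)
Your argument is correct, and it is the standard one: an atomic $\B\in\CA_n$ embeds into $\Cm\At\B$ via $b\mapsto\{x\in\At\B:x\leq b\}$ (complete additivity of the cylindrifications being the one point that needs checking), and $\RCA_n$ is closed under subalgebras, so $\Cm\At\A\in\RCA_n$ forces every atomic algebra sharing the atom structure of $\A$ into $\RCA_n$; the converse is trivial since $\Cm\At\A$ is itself such an algebra. There is nothing in the paper to compare this against: despite being placed in a theorem environment, the statement is given no proof and is used purely as the \emph{definition} of the class ${\sf SRCA}_n$ (the text immediately proceeds to name the class and never argues the implication). So what you have supplied is the verification, omitted by the paper, that the two usual readings of ``strongly representable'' --- ``$\Cm\At\A$ is representable'' and ``every atomic algebra on $\At\A$ is representable'' --- coincide, together with the observation that the hypothesis $\A\in\RCA_n$ is redundant since $\A$ itself embeds into $\Cm\At\A$. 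Both points are right and worth recording.
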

This class will be denoted by ${\sf SRCA_n}$; we know that for $n\geq 3$, ${\sf SRCA_n}\subset \RCA_n$. 
The class of completely representable $\CA_n$s, will be denoted by ${\sf CRCA}_n$.

We prove:
\begin{theorem} 
\begin{enumarab}
\item Let $n$ be finite $n\geq 3$. Then there exists a countable weakly $k$ neat atom structure of dimension $n$ if and only if $k<\omega.$
\item There exists an atom structure of a representable atom algebra that is not neat, this works for all dimensions $>1$.
\item For all finite  $n\geq 3,$ there exists an atom structure that is not $n+2$ complete, that is elementary equivalent to an atom structure
that is $m$ neat, for all $m\in \omega$.
\item The class of completely representable algebras, and strongly representable 
ones of dimension $>2$, is not detectable in $L_{\omega,\omega}$ while the class
$\Nr_n\CA_m$ for any ordinals $1<n<m<\omega$, is not detectable 
even in $L_{\infty,\omega}.$ For for infinite $n$, $\Nr_n\CA_m$
is not detectable in first order logic nor in the quantifier free reduct of $L_{\infty,\omega}$.
\end{enumarab}
\end{theorem}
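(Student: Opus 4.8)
The plan is to prove the four items by a combination of two standard "blow up and blur" constructions, exactly as advertised in the abstract: the Andréka–Németi–Sayed Ahmed type construction (for items 1--3) and the Hirsch–Hodkinson probabilistic construction (for item 4). For item 1, I would take the finite rainbow-like or Monk-like structure, blow up each atom into $\omega$-many copies, and verify that the resulting term algebra $\Tm\alpha$ embeds into $\Nr_n\CA_{n+k}$ by exhibiting $n+k$ "extra dimensions worth" of blurs while checking that $k+1$ blurs are \emph{not} available, so that $\Cm\alpha$ carries a finite-infinite contradiction and fails to be representable; conversely, if $k=\omega$ one recovers a neat reduct of an $\omega$-dimensional algebra, hence a strongly representable structure, so no weakly $\omega$-neat structure exists. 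The "only if" direction is thus the easy half; the "if" direction for each fixed finite $k$ is where the bookkeeping lives. For item 2 I would run the same blow-up but arrange that \emph{no} uniform $k$ works for all algebras on the atom structure — i.e. the atom structure is representable (indeed completely representable in the base case) but the family $\{\B : \At\B=\alpha\}$ has neat-reduct depth unbounded in $\omega$ — which is exactly the negation of the "neat" clause of Definition 1.1(2). This is a genuinely low-dimensional phenomenon, so I would check it works already for $n=2$ (and even the degenerate cases $n>1$) by a direct argument on $\Crs$-style representations rather than invoking rainbow machinery.

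For item 3 I would use the non-elementariness engine: produce a single atom structure $\alpha$ that is \emph{not} $n+2$ complete (no atomic $\A$ with $\At\A=\alpha$ lies in $S_c\Nr_n\CA_{n+2}$, via a pebble/back-and-forth obstruction showing $\exists$ loses the relevant game at level $n+2$) while simultaneously $\alpha \equiv \beta$ for an atom structure $\beta$ that \emph{is} $m$ neat for every finite $m$. The cleanest route is to take $\beta$ to be the atom structure of a neat reduct of an $\omega$-dimensional algebra coming from an infinite-chromatic-number graph, take $\alpha$ from a graph of chromatic number $2$ in the Hirsch–Hodkinson sequence, and use the fact (Erdős graphs) that chromatic number is not first-order expressible to get $\alpha \equiv \beta$; then $n+2$-incompleteness of $\alpha$ follows because $S_c\Nr_n\CA_{n+2}$-membership would force enough of a representation to $2$-colour the graph. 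Item 4 packages items 1--3 into the language of \emph{detectability}: since $\alpha \equiv_{L_{\omega,\omega}} \beta$ with $\Cm\beta$ representable but $\Cm\alpha$ not, the classes $\Cs\Rm{CRCA}_n$ and $\Rm{SRCA}_n$ fail detectability in $L_{\omega,\omega}$; and for $\Nr_n\CA_m$ one gets the stronger failure in $L_{\infty,\omega}$ because the two atom structures can be made \emph{back-and-forth equivalent} (the blur families are $\omega$-saturated enough that $\exists$ wins the infinite Ehrenfeucht–Fraïssé game), while one of $\Tm\alpha,\Tm\beta$ is a neat reduct and the other is not. The infinite-dimensional clause is handled by transferring the finite constructions along the standard $\Nr_n$ functor and noting the quantifier-free $L_{\infty,\omega}$ reduct cannot see the "missing" finitely many dimensions.

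The main obstacle, as in all arguments of this shape, is the \emph{representability of the term algebra} in the blow-up: one must actually build the representation of $\Tm\alpha$ from the blurs (a step-by-step or games argument giving a complete representation of $\Tm\alpha$, hence a representation of its \emph{completion} only in the good case, and a representation of $\Tm\alpha$ itself but provably not of $\Cm\alpha$ in the bad case), and separately show the sharp failure of representability of $\Cm\alpha$ via the finite-infinite contradiction at exactly the right dimension $n+k$ (resp. $n+2$). Getting these two thresholds to match — enough blurs for $\Nr_n\CA_{n+k}$ but a contradiction one step up — is the delicate part, and it is where I expect the bulk of the technical work (a careful hypergraph-colouring / Ramsey-style count) to concentrate; the equivalence $\alpha\equiv\beta$ and the packaging into detectability statements are then comparatively formal.
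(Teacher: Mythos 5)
Your item (1) is essentially the paper's argument: blow up and blur a finite Maddux algebra $\E_k(2,3)$, use the $n$-blur $(J,E)$ to represent the term algebra and place it in $\Nr_n\CA_{n+k}$, and kill $\Cm\At$ by the finite--infinite contradiction; the converse for $k=\omega$ via complete representability of countable atomic algebras in $\Nr_n\CA_{\omega}$ is also the paper's. But there are two genuine gaps. First, your route to item (3) is the wrong construction. You propose taking $\alpha$ and $\beta$ from graphs of chromatic number $2$ and $\infty$ respectively and using non-first-order-definability of chromatic number to get $\alpha\equiv\beta$. That machinery (Erd\H{o}s graphs, anti-Monk algebras) controls \emph{strong representability}, i.e.\ representability of $\Cm\alpha$; it gives no handle at all on membership in $S_c\Nr_n\CA_{n+2}$ or on $m$-neatness, and there is no reason why $S_c\Nr_n\CA_{n+2}$-membership would ``force a $2$-colouring.'' The paper instead runs a rainbow construction for $\CA_n$ with \emph{two} games: a hypernetwork game $H$ whose finite truncations $\exists$ wins (yielding, by a K\"onig-lemma argument, elementary equivalence to a full neat reduct, hence $m$-neatness for all finite $m$), and a pebble game $F^{k}$ with $k\geq n+2$ reusable nodes which $\forall$ wins on the red clique, excluding membership in $S_c\Nr_n\CA_{n+2}$. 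Without this two-game rainbow apparatus the simultaneous ``not $n+2$ complete'' and ``$m$ neat for all $m$'' cannot be extracted from a chromatic-number argument.

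Second, for item (2) the paper does not re-run a blow-up: it uses an entirely different, explicit construction --- the weak space $V\subseteq{}^{\alpha}\F$ over a field of characteristic $0$, the element $y=\{s: s_0+1=\sum_{i>0}s_i\}$ and its singletons $y_s$, and the algebra $\A=\Sg^{\C}\{y,y_s\}$; one shows a family $Y=\{\tau(y_r,y_s)\}$ has no supremum in $\A$ but must have one in any full neat reduct, so $\Rd_{sc}\A\notin\Nr_{\alpha}\SC_{\alpha+1}$, and this works uniformly for all dimensions $>1$, infinite included. Your ``direct argument on $\Crs$-style representations'' is not an argument, and a blow-up-and-blur will not obviously go through at $n=2$. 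Relatedly, the $L_{\infty,\omega}$ non-detectability of $\Nr_n\CA_m$ in item (4) is not obtained from saturation of blur families: it comes from a cardinality twist --- a product $\prod_u\A_u$ of uncountable components, one component replaced by a countable elementary Boolean subalgebra, with $L_{\infty,\omega}$-equivalence established by a back-and-forth/Boolean-extension argument and non-membership in $\Nr_n\CA_m$ witnessed by a term that would force the twisted component to be uncountable. Only your packaging of the $L_{\omega,\omega}$ failures for ${\sf CRCA}_n$ and ${\sf SRCA_n}$ out of items (1)--(3) is the formal step you take it to be.
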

\begin{proof} 
\begin{enumarab}
\item We prove (3) and the first part of (4) together. 
We will not give the details, because the construction we use is a rainbow construction for cylindric algebras,
is really involved and it will be submitted elsewhere,
however, we give the general idea.
We use essentially the techniques in \cite{r}, together with those in \cite{hh}, extending the rainbow construction
to cylindric algebra. But we mention a very important difference. 

In \cite{hh} one game is used to test complete representability.
In  \cite{r} {\it three} games were devised testing different $\Ra$ embeddability properties.
(An equivalence between complete representability and special neat embeddings is proved in \cite{sayed}, se also below)

Here we use only two games adapted to the $\CA$ case. This suffices for our purposes. 
The main result in \cite{hh}, namely, that the class of completely representable algebras of dimension
$n\geq 3$, is non elementary, follows from the fact that \pe\  cannot win the infinite length 
game, but he can win the finite ones. 

Indeed a very useful way of characterizing non-elementary classes, 
say $\K$, is a {\it Koning lemma} argument. The idea is  to devise a game $G(\B)$ on the atom structure of $\B$ such that for a given algebra atomic 
$\A$  \pe\  has a winning strategy on its atom structure for all games of finite length, 
but \pa\ wins the $\omega$ round game. It will follow that there a (countable) cylindric algebra $\A'$ such that $\A'\equiv\A$, 
and \pe\ has a \ws\ in $G(\A')$.
So $\A'\in K$.  But $\A\not\in K$
and $\A\preceq\A'$. Thus $K$ is not elementary.

To obtain our results we use {\it two} distinct games, 
both having $\omega$ rounds, played on a rainbow atom structure, 
the desired algebra is any algebra based on this atom structure; it can be the term algebra generated by the atoms or the 
full complex algebra, or any subalgebra in between.  
Of course the games are very much related.

In this new context \pe\ can also win a finite game with $k$ rounds for every $k$. Here the game
used  is more complicated than that used in Hirsch and Hodkinson \cite{hh}, 
because in the former case we have three kinds of moves which makes it harder for \pe\ 
to win. 

Another difference is that the second game, call it $H$,  is actually  played on pairs, 
the first component is an atomic network (or coloured graph)  defined in the new context of cylindric 
algebras, the second is a set of hyperlabels, the finite sequences of nodes are labelled, 
some special ones are called short, and {\it neat} hypernetworks or hypergraphs are those that label short hyperedges with the same label. 
And indeed a \ws\ for \pe\ 
in the infinite games played on an atom structure $\alpha$ forces that this is the atom structure of a neat reduct; 
that is there is an aatomic algebra $\A\in \Nr_n\CA_{\omega}$, such that $\alpha\cong \At\A$. 
However, unlike complete representability,
does not exclude the fact, in principal, there are other representable algebras 
having the same atom stucture can be only subneat reducts (see item 3 below).
%(cal an atom structure  weakly neat rep, if there exists $Tm\A\in \Nr_n\CA_{n+k}$). 

But \pe\ cannot win the infinite length game, it can only win the finite length games of length $k$ for every finite $k$.

On the other hand, \pa\  can win 
{\it another  pebble game}, also in $\omega$ rounds (like in \cite{hh} on a red clique), but
there is a  finiteness condition involved in the latter, namely  is the number of nodes 'pebbles 'used, which is $k\geq n+2$, 
and  \pa\ s \ws\ excludes the neat embeddablity of the algebra in $k$ extra dimensions. This game will be denoted by $F^k$. 

This implies that $\A$ is elementary equivalent to a full neat reduct but it is not in 
$S_c\Nr_n\CA_{n+2}$. This proves the desired. But we should mention that the construction is complicated, because that atom structures
are not like the $\RA$ case, based on simple atoms, namely the colours, but rather on coloured graphs.

The relation algebra used by Hirsch \cite{r}, 
is very similar to the rainbow relation algebra $\A_{\mathbb{N},\mathbb{Z}}$, based on the two graphs 
$\mathbb{N}, \mathbb{Z}$ in the sense of
\cite{HHbook2}
and that used by us is $\M(\mathbb{N})$, in the sense of \cite{HHbook}, and what coresponds to $\mathbb{Z}$ in our context, 
is the indexing set of greens that \pe\ never uses.  \pa\ wins can only win on the red clique $\mathbb{N}.$

A coloured graph is a graph, 
such that every  edge has a rainbow colour, white, black, green, red, but certain $n-1$ tuples, 
or hyperedges, are also coloured by shades of yellow to code the cylindric algebra part.

The forbidden triples are translated to the non-existence of certain 
triangles in the coloured graphs used, so the relation algebra construction is coded in this new rainbow construction.
Games then are played on coloured {\it hypergraphs}, which a graph together with a set of hyperlabels, labelling finite sequences of nodes.
The short hyperedges are constantly labelled, by $\lambda$ say, giving $\lambda$ neat hypegraphs.
The game is played on these.
Now let us concentrate on the graph part of the game $H$ truncated to finite rounds; the crucial move is the cylindrifier move.
\pa\ provides a coloured graph $\Gamma$,  a face $F$, $|F|\leq n-1$, together with a new node, and \pe\ has to respond, 
by extending the given graph to a coloured one with the new node $x$.

Very roughly, if $x\in \Gamma,$ and $k$ is the new node, are appexes of the same cone with base $F$,  
(a cone is a special coloured graph whose sides are coloured by 
greens), then she uses red. 
She will not run out of them, because the game is finite, so \pa\ cannot win this game, if she adopts her \ws\ in $F^m$ 
by forcing a red clique (this is the {\it only} way he can win).
If they are not, then, in labelling edges the usual rainbow strategy is adopted, she tries white, back and then red.
Never use green. Labelling $n-1$ hyperedges by shades of yellow is not very hard.
This works.  
There are two other amalgamation moves, the for one of them \pe\ has no choice, it is completely determined by \pa\ s move.
The third more consists of amalgamating two coloured graphs $\Gamma_1$ and $\Gamma_2$, if $i\in \Gamma_1\sim \Gamma_2$
and $j\in \Gamma_2\sim \Gamma_1$, then the edge $(i,j)$ is coloured like the cylindrfier move; again the usual ranbow strategy is used.
$n-1$ hyperdesges can also be coloured in the amalgamated graph by the appropriate  shades of yellow.
And in fact the Hirsch Hodkinson's main result in \cite{r}, 
can be seen as a special case, of our construction. The game $F^k$, without the restriction on number
of pebbles used and possibly reused, namely $k$ (they have to be reused when $k$ is finite), but relaxing the condition of finitness,
\pa\ does not have to resuse node, and then this game  is identical to the game $H$ when we delete the  hyperlabels from the latter, 
and forget about the second and third 
kinds of move. So to test only complete representability, we use only these latter games, which become one, namely the one used 
by Hirsch and Hodkinson in \cite{hh}. 

In particular, our algebra $\A$ constructed is not completely representable, but is elementary equivalent to one that is.
This also implies that the class of completely representable atom structures are not elementary, the atom structure of the 
former two structures are elementary equivalent, one is completely representable, the other is not.  Hence ${\sf CRCA}_n$ is not detectable
in $L_{\omega,\omega}$.
Since an atom structure of an algebra is first order interpretable in the algebra, hence, 
the latter also gives an example of an atom structure that is 
weakly representable but not strongly representable.

\item Now we prove 2, let $k$ be a cardinal. We give a concrete example of $\bold M$ above.
Let $\E_k=\E_k(2,3)$ denote the relation algebra
which has $k$ non-identity atoms, in which $a_i\leq a_j;a_l$ if $|\{i,j,l\}|\in \{2,3\}$
for all non-identity atoms $a_i, a_j, a_k$.(This means that all triangles are allowed except the monochromatic ones.) 
These algebras were defined by Maddux.
Let $k$ be finite, let $I$ be the set of non-identity atoms of $\E_k(2,3)$ and let $P_0, P_1\ldots P_{k-1}$ be an enumeration of the elements of $I$.
Let $l\in \omega$, $l\geq 2$ and let $J_l$ denote the set of all subsets 
of $I$ of cardinality $l$. Define the symmetric ternary relation on $\omega$ by $E(i,j,k)$ if and only if $i,j,k$ are evenly distributed, that is
$$(\exists p,q,r)\{p,q,r\}=\{i,j,k\}, r-q=q-p.$$
Now assume that $n>2$, $l\geq 2n-1$, $k\geq (2n-1)l$, $k\in \omega$. Let $\bold M=\E_k(2,3).$
Then $\bold M$ is a simple, symmetric finite atomic relation algebra.
Then as above, $\bold M$ is blown up and blurred. This is done by splitting each atom into infinitely countable many ones 
and using a finite set of blurs. So recall by the discussion above, that 
the underlying set of the new atom structure will be of the form
$\omega\times \At\M\times J$, $J$ is a set of finite blurs that corresponds to colours that in turn correpond to non principal 
ultarfilters, needed to represent the term algebra. This term algebra  which is blurred in the sense that $\M$ is not embeddable in it; 
but $\M$ will be embeddable in the full complex algebra the former can be only represented 
on finite sets, the later on infinite sets, if at all, hence
it cannot be represpentable. The idea, as indicated above is to define two partitions of the set $I\times \At\M\times J$, 
the first is used to embed $\M$ into the complex algebra, and the term algebra will be the second partition up to finite and 
cofinite deviations.

Now we get technical again. We have $$(\forall V_2\ldots, V_n, W_2\ldots W_n\in J_l)(\exists T\in J_l)(\forall 2\leq i\leq n)$$
$$(\forall a\in V_i)\forall b\in W_i)(\forall c\in T_i)(a\leq b;c).$$
That is $(J4)_n$ formulated in \cite{ANT} p. 72 is satisfied. Therefore, as proved in \cite{ANT} p. 77,
$B_n$ the set of all $n$ by $n$ basic matrices is a cylindric basis of dimension $n$.
But we also have $$(\forall P_2,\ldots ,P_n,Q_2\ldots Q_n\in I)(\forall W\in J_l)(W\cap P_2;Q_2\cap\ldots \cap P_n:Q_n\neq 0)$$
That is $(J5)_n$ formulated on p. 79 of \cite{ANT} holds. According to definition 3.1 (ii) $(J,E)$ is an $n$ blur for $\M$, 
and clearly $E$ is definable in $(\omega,<)$.
Let $\C$ be as defined in lemma 4.3 in \cite{ANT}.   
Then, by lemma 4.3, $\C$ is a subalgebra of $\Cm\B_n$, hence it contains the term algebra $\Tm\B_n$.
Denote $\C$ by $\Bb_n(\M, J, E)$. Then by theorem 4.6 in \cite{ANT} $\C$ is representable, and by theorem 4.4 in \cite{ANT} 
for $m<n$
$\Bb_m(\M,J,E)=\Nr_m\Bb_n(\M,J,E)$. However $\Cm\B_n$ is not representable.
In \cite{ANT} $\R=\Bb(\M,J,E)$ is proved to be generated by a single element. 
Let $\A_k\in \RCA_n\cap \Nr_n\CA_{n+k}$. 

For the other implication, if $k=\omega$, then algebra in $\Nr_n\CA_{\omega}$ will be completely representable. 
If the term algebra is completely representable, then the complex algebra will be 
representable, and we do not want that.

\item Concerning that the class of strongly representable algebras, one uses an ultraproduct of what we call {\it anti-Monk} algebras.
If one increases the number of blurs in the above construction, then one gets a 
a sequence of non representable algebras, namely the complex algebras
based on the atom structure as defined above, with an increasing number of blurs. 
This corresponds to algebras based on graphs with increasing finite chromatic number ; 
the limit will be an algebra based on a graph of infinite chromatic number, hence
will be representable, in fact, completely representable. This for example proves Monk's classical non finite axiomatizability result.
A graph which has a finite colouring is called a bad graph by Hirsch and Hodkinson. A good graph is one which gives representable algebras,
it has infinite chromatic number. So the Monk theme is to construct algebra based on bad graphs that converge to one that is based
on  a good graph.
This theme is reversed  by used by what we call anti Monk algebras, that are based on Erdos graphs.
Every  graph in this sequence has infinite chromatic number and the limit  algebra based on the ultraproduct of these
graphs will be only  two colourable. This shows that the class of strongly atom structures is not elementary, and 
since an atom structure of an algebra is first order interpretable in the algebra, then ${\sf SRCA_n}$ is not detectable 
in first order logic.

\end{enumarab}

\end{proof}

It is known, this is proved by Sayed Ahmed, and later (independently) by Robin Hirsch that for a countable algebra 
$\A\in \CA_n$ where $n\in \omega$, $\A$ is completely representable if and only if $\A$ is atomic and
$\A\in S_c\Nr_n\CA_{\omega}.$ 

(A form of this characterization exists in the infinite dimensional case, due also to Sayed Ahmed.)
A natural question, cannot help but to come to mind, and that is: 
Can we characterize the class of strongly representable algebras via neat embeddings.

At least we have one side:

\begin{theorem} If $\A$ is countable and atomic, and $\A \in {\sf SRCA_n}$, then 
$\A\in S_c\Nr_n\CA_{\omega}$.
\end{theorem}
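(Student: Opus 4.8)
The plan is to derive the stronger conclusion $\A\in S_c\Nr_n\CA_\omega$ from the hypothesis that $\A$ is countable, atomic, and $\Cm\At\A$ is representable. The key observation is that strong representability hands us a representable \emph{complete} algebra $\Cm\At\A$ into which $\A$ embeds as a dense (indeed atomic) subalgebra. Since $\Cm\At\A$ is representable and complete, and its atom structure is $\At\A$ which is countable, a result of the type already invoked in the excerpt (the characterization, due to Sayed Ahmed and independently Hirsch, that a countable atomic $\CA_n$ is completely representable iff it lies in $S_c\Nr_n\CA_\omega$) applies once we know $\Cm\At\A$ is \emph{completely} representable. So the first step is to argue that a complete, atomic, representable $\CA_n$ whose atom structure is countable is in fact completely representable: a representation of the complete algebra respects arbitrary joins, and because the algebra is atomic every element is the join of the atoms below it, so the representation sends $\sum\At\A$ to the whole base set, i.e. it is a complete representation. (If one prefers, invoke Baire category / the Rainbow-free classical argument that atomic + representable + complete $\Rightarrow$ completely representable for countable algebras.)

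Second, having established $\Cm\At\A \in {\sf CRCA}_n$, apply the neat-embedding characterization quoted just before the theorem: $\Cm\At\A \in S_c\Nr_n\CA_\omega$. Concretely there is $\D\in\CA_\omega$ with $\Cm\At\A \subseteq_c \Nr_n\D$, the embedding being complete (preserving all suprema that exist).

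Third, compose the two embeddings. We have $\A\subseteq_c \Cm\At\A$ because $\A$ is atomic with the same atom structure, hence the inclusion $\A\hookrightarrow\Cm\At\A$ preserves all existing suprema; and $\Cm\At\A\subseteq_c\Nr_n\D$. Composition of complete embeddings is a complete embedding, so $\A\subseteq_c\Nr_n\D$, whence $\A\in S_c\Nr_n\CA_\omega$, as required. One should double-check that "$\subseteq_c$" here is transitive in the relevant sense: if $\A\subseteq_c\B$ and $\B\subseteq_c\C$ then any sup in $\A$ is a sup in $\B$, which is a sup in $\C$; conversely a sup computed in $\C$ that happens to lie in $\A$ is, restricted, a sup in $\B$ and then in $\A$. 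This is routine.

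The main obstacle is the first step: moving from "representable" to "\emph{completely} representable" for the complete algebra $\Cm\At\A$. This is exactly where countability is used — for uncountable algebras atomicity plus representability does not force complete representability. The cleanest route is to quote the known theorem that a countable atomic representable $\CA_n$ whose canonical embedding algebra (its completion) is also representable admits a complete representation, or equivalently to run the omitting-types/Baire-category argument directly on $\Cm\At\A$: list the (countably many) atoms, and in a representation of $\Cm\At\A$ show the set of points not realizing the "$x$ is below some atom" condition is meager, hence the representation can be taken to be complete. Everything after that is bookkeeping about complete embeddings and neat reducts. I would flag that the converse inclusion — recovering strong representability from $\A\in S_c\Nr_n\CA_\omega$ — is precisely the side the paper does \emph{not} claim, consistent with the "We could prove one side" remark in the abstract.
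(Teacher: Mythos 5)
Your overall architecture is close to the paper's: the paper's entire proof is the single line ``Since ${\sf SRCA}_n\subseteq {\sf CRCA}_n$'', i.e.\ it reduces to complete representability and then silently invokes the Sayed Ahmed--Hirsch characterization (quoted just before the theorem) that a \emph{countable} atomic $\CA_n$ is completely representable iff it lies in $S_c\Nr_n\CA_{\omega}$. But your justification of the crucial first step is not correct, and this is a genuine gap rather than a stylistic difference. A representation of $\Cm\At\A$ is merely a homomorphism into a field of sets; it preserves finite joins only, and the fact that the \emph{algebra} is complete does not force the \emph{representation} to preserve infinite joins. If ``complete $+$ atomic $+$ representable $\Rightarrow$ completely representable'' held as you assert, then representable and completely representable would coincide for all complex algebras of atom structures and most of the theory this paper is about (atom-canonicity, weakly versus strongly representable atom structures) would collapse. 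Your fallback via Baire category also fails: $\Cm\At\A$ has cardinality $2^{|\At\A|}$, so the countability hypothesis, which lives on $\A$, is not inherited by its completion, and the characterization ${\sf CRCA}_n=S_c\Nr_n\CA_\omega\cap\{\text{atomic}\}$ is only available for countable algebras --- so step 2, applying it to $\Cm\At\A$, is not licensed either. (Your step 3 is fine: $\A\subseteq_c\Cm\At\A$ does hold for atomic $\A$, and complete embeddings compose.)

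The fix is to apply the neat-embedding characterization to $\A$ itself, which \emph{is} countable; what then remains is exactly the inclusion ${\sf SRCA}_n\subseteq{\sf CRCA}_n$, i.e.\ that a countable atomic algebra whose completion is representable admits a complete representation. That is the entire mathematical content of the theorem, the paper asserts it without argument, and your write-up does not supply an argument for it either, so the gap is not closed. Be warned that the \emph{easy} inclusion runs the other way: a complete representation of $\A$ maps the atoms to a partition of the unit and hence induces a representation of $\Cm\At\A$, giving ${\sf CRCA}_n\subseteq{\sf SRCA}_n$; the inclusion you and the paper both need is the nontrivial (and, given the paper's own later remark that there are completely representable algebras that are not strongly representable, internally contested) one. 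Any honest proof must confront it directly, e.g.\ by producing a complete embedding of $\A$ into a neat reduct from a representation of $\Cm\At\A$, not by assuming representations are join-continuous.
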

\begin{proof} Since ${\sf SRCA_n}\subseteq {\sf CRCA_n}.$
\end{proof}
However, there are completely representable algebras that are not strongly representable.
So the converse is not true. A natural candidate for a complete characterization, at least in the countable case,
is the class $\Nr_n\CA_{\omega}$. That is, for a countable atomic $\A\in \CA_n$, $\A \in {\sf SRCA}_n$ if and only if $\A\in \Nr_n\CA_{\omega}.$
For the remaining direction, assume that $\A\in \Nr_n\CA_{\omega}$ is atomic and countable. We need to 
prove that $\Cm\At\A$ is representable.
If true this gives an entirely new proof that the class is not elementary. In any event we know that any
$K$ such that $\Nr_n\CA_{\omega}\subseteq K\subseteq S_c\Nr_n\CA_{\omega}$ is not elementary
and this too can help, in giving a different proof.

However, the essence of showing that the class ${\sf CRCA_n}$ is not elementary depends on a rainbow construction, 
while the proof that ${\sf SCRA_n}$ is not elementary depends
on anti-Monk algebras, makes one wonder. 
(Though these two techniques are unified in \cite{1}, but the unification is only superficial, it misses on the core of the constructions involved, 
and how they compare, 
if at all).
On the other hand, the blow up and blur construction of Andr\'eka and N\'emeti, discussed in detail
in part one,  gives an example of an atomic algebra in $\RCA_n\cap \Nr_n\CA_{n+k}$ that is not strongly representable.
hence $\RCA_n\cap \Nr_n\CA_{n+k}$ is not closed {\it under completions}, even if this completion is sought in the bigger class $\RCA_n$.
This happens  for any finite $k.$ When $k=\omega$, then $\Nr_n\CA_{\omega}\subseteq \RCA_n$ anyway, 
which prompts:

{\bf Is the class $\Nr_n\CA_{\omega}$ closed under completions, with respect to $\RCA_n$?},

And perhaps an easier question:

{\bf If $\A\in \Nr_n\CA_{\omega}$ is countable and atomic, is it true that $\Cm\At\A\in \RCA_n$.?}

An affirmative answer will confirm our claimed characterization.
Summing up, that would be a very {\it neat} theorem:

\begin{theorem} Let $n\geq 3$ be finite. Let $\A\in \CA_n$ be countable and atomic.
Then
\begin{enumarab}
\item $\A\in {\sf CRCA_n}$ iff $\A\in S_c\Nr_n\CA_{\omega}$
\item $\A\in {\sf SRCA_n}$ iff $\A\in \Nr_n\CA_{\omega}$
\end{enumarab}
\end{theorem}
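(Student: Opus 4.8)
Item (1) is the characterisation of complete representability by special neat embeddings, due to Sayed Ahmed and, independently, to Hirsch, recalled just above; we take it as known, so the whole task lies in item (2). For the direction $(\Rightarrow)$ of (2) we strengthen the preceding Theorem (which yields only membership in $S_c\Nr_n\CA_\omega$). Let $\A$ be countable, atomic, with $\Cm\At\A\in\RCA_n$. The algebra $\Cm\At\A$ is atomic with atom structure $\At\A$, so it too is strongly representable; hence $\Cm\At\A\in{\sf SRCA_n}\subseteq{\sf CRCA_n}$, and $\Cm\At\A$ carries a complete representation $h$ over some base $M$. Take $\B\in\CA_\omega$ to be the subalgebra of the cylindric set algebra of dimension $\omega$ with base $M$ generated by the sets $\{\bar s\in{}^\omega M : \bar s\text{ agrees on its first }n\text{ coordinates with some tuple of }h(a)\}$, $a\in\At\A$. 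The point to verify --- and here one runs the bookkeeping of the game $H$ of \S1 --- is that \pe's constant labelling of short hyperedges forces the $n$-neat reduct of $\B$ to collapse back onto an isomorphic copy of $\A$, rather than onto a strictly larger subalgebra of $\Cm\At\A$; this is exactly where \emph{full}, as opposed to merely subneat, embeddability is gained, upgrading $S_c\Nr_n\CA_\omega$ to $\Nr_n\CA_\omega$.

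For the direction $(\Leftarrow)$ of (2), which is the heart of the matter, assume $\A\in\Nr_n\CA_\omega$ is countable and atomic, and fix $\B\in\CA_\omega$ with $\A=\Nr_n\B$. Replacing $\B$ by $\Sg^{\B}(\A)$ we may assume $\B$ countable, since then $\A\subseteq\Nr_n\Sg^{\B}(\A)\subseteq\Nr_n\B=\A$. We must show $\Cm\At\A\in\RCA_n$. The plan is to play on $\At\A$ the strong-representability game $G$ --- the $\omega$-round game on an atom structure $\alpha$ whose winning strategy for \pe\ yields a representation of $\Cm\alpha$ --- and to synthesise a winning strategy for \pe\ from the neat embedding $\A=\Nr_n\B$: the coordinates $\omega\setminus n$ of $\B$ supply the fresh witnesses demanded by \pa's cylindrifier moves and, crucially, coherent choices whenever \pa\ would otherwise force \pe\ to pick among the infinitely many atoms below some infinitary join that lives in $\Cm\At\A$ but not in $\A$. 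The hyperlabel machinery of \S1 is precisely what records those choices consistently along an $\omega$-length play, so that no finite--infinite contradiction of the sort that kills the blow-up-and-blur examples can arise.

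The main obstacle is exactly this synthesis. In general the completion of a representable algebra need not be representable: the blow-up-and-blur algebras of \S1 are atomic members of $\RCA_n\cap\Nr_n\CA_{n+k}$ for every finite $k$, yet their completions are not representable, so finitely many spare dimensions provably do not suffice. What must be established is that infinitely many genuine extra dimensions --- carrying the full diagonal and substitution structure --- \emph{do} suffice, i.e. that \pe's strategy in the $\omega$-dimensional neat game descends to a strategy for $G$ on $\At\A$. Equivalently, one must answer affirmatively the displayed question of whether $\A\in\Nr_n\CA_\omega$, countable and atomic, forces $\Cm\At\A\in\RCA_n$. Together with $(\Rightarrow)$, a positive answer would in addition give a proof, independent of the anti-Monk Erdos-graph construction, that ${\sf SRCA_n}$ is not elementary, since $\Nr_n\CA_\omega$ is itself not closed under elementary equivalence.
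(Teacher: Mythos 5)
You should know at the outset that the paper does not prove this theorem either: it is introduced with ``Summing up, that would be a very \emph{neat} theorem,'' and is immediately followed by ``We have four implications in the above theorem, and we know that three are right. What about the fourth?''; the abstract likewise admits ``We could prove one side.'' So there is no complete proof in the paper to match. What the paper actually establishes is item (1) (the Sayed Ahmed/Hirsch characterization of complete representability by $S_c\Nr_n\CA_\omega$, quoted as known) together with ${\sf SRCA_n}\subseteq S_c\Nr_n\CA_{\omega}$ for countable atomic algebras; the rest is open. Your plan correctly diagnoses the direction $\A\in\Nr_n\CA_\omega\Rightarrow\Cm\At\A\in\RCA_n$ as the open problem and does not pretend to close it, which matches the paper exactly.

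The genuine gap is in your treatment of $(\Rightarrow)$ of item (2), where you claim to ``strengthen'' the paper's result from $S_c\Nr_n\CA_\omega$ to $\Nr_n\CA_\omega$. Two steps there do not work as written. First, you apply ${\sf SRCA_n}\subseteq{\sf CRCA_n}$ to $\Cm\At\A$, which is uncountable ($2^{\aleph_0}$ many elements when $\A$ is countably infinite and atomic); the paper asserts that inclusion only in the countable atomic setting, and its section on canonical extensions explicitly constructs uncountable atomic algebras in $\Nr_n\CA_{\omega}$ with no complete representation, so cardinality is not a cosmetic issue here. Second, and more seriously, the assertion that the $n$-neat reduct of your generated $\B\in\CA_\omega$ ``collapses back onto an isomorphic copy of $\A$'' is precisely the hard point, and the appeal to ``the bookkeeping of the game $H$'' is not an argument: what a complete representation buys you directly is $\A\subseteq_c\Nr_n\B$, i.e.\ membership in $S_c\Nr_n\CA_{\omega}$ --- exactly the paper's preceding theorem, and provably weaker, since the paper itself exhibits algebras in $S_c\Nr_n\CA_{\omega}$ that are not in $\Nr_n\CA_{n+1}$. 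Unless you can actually manufacture $\B$ with $\Nr_n\B=\A$ on the nose, the forward implication of (2) remains as open as the backward one; as it stands your proposal, like the paper, proves item (1), one weak inclusion, and leaves the characterization of ${\sf SRCA_n}$ by full neat reducts as a conjecture.
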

We have four implications in the above theorem, and we know that three are right. What about the fourth?

\section{Generalizations}

\subsection{Omitting types for multi modal logics}

Constructing weakly representable atom structures that is not strongly representable, 
gives a representable algebra, namely, the term algebra, call it $\A$, that is not completey representable, 
for else the complex algebra will be representable, and so the atom structure will be not be strongly representable after all.
If the atom structure is countable, then $\A$ can be used to given an $L_n$ theory $T$ for which 
the omitting types theorem fails. If $\A$ is simple, then $T$ is complete.

Indeed, assume that $\A=\Fm_T$,  that is $\A$ is the Lindenbaum Tarski quotient algebra corresponding
to the $L_n$ theory $T$, and let $\Gamma$ be the following set of formulas$\{\phi: \phi/T  \text {is a co-atom} \}$. 
Then $\Gamma$ is a non principal-type, because $\sum \At\A=1$, so that $\prod\{-x: x\in \At\A\}=0$,  
but $\Gamma$ cannot be omitted in any model of $T$ , for any such model will be the base
of a complete representable of $\A$. This means that there is no $T$ 
witness for $\Gamma$ in $\L_n$, that is there no $n$ variable formula $\phi$ such that 
$T\models \phi\to \Gamma$.  If $T\models \phi\to \Gamma$, then of course every model of $\phi$ will realize $\Gamma$, $\phi$ witnesses
$\Gamma.$
The omitting types theorem, or rather the contrapositive thereof,  implies that there the converse also hold, namely, 
if $\Gamma$ is realized in every model of $T$, then it has to have a witness, using $k$ variables, possibly $>n$.
But if for every $k\in \omega$, there exists countable $\A\in \RCA_n\cap \Nr_n\CA{n+k}$, 
that is not completely representable, then each such $\A$ gives a countable $L_n$ theory $T$
and a set $\Gamma$ (constructed from the  the co atoms), such $\Gamma$ 
is realized in every model (any model omitting $\Gamma$ will give a complete
representation), but the witness has to use more than $n+k$ variables.  

Do we have an analogous result for fragments of $L_n$ without equality. The answer is yes.
This theorem holds for Pinter's algebras and polyadic algebras, let $\K$ denote either class.
It suffices to show that there exists $\B\in {\sf RK}_n\cap \Nr_n\K_{n+k}$ that is not completely completely representable. 
But this is not hard to show. Let $\A$ be the algebra provided by the main theorem in \cite{ANT}. 
Then first we can expand $\A$ to a polyadic equality
algebra because it is a subalgebra of the complex algebra based on the atom structure of basic matrices.
This new algebra will also be in $\RPEA_n\cap \Nr_n\PEA_{n+k}$. Its reduct, whether the polyadic or the Pinter, will be as desired.

Indeed consider the $\PA$ case, with the other case completely analagous,
this follows from the fact that $\Nr_n\K_n\subseteq \Rd\Nr_n\PEA_{n+k}=\Nr_n\Rd\PEA_{n+k}\subseteq \Nr_n\PA_{n+k}$,
and that $\A$ is completely representable if and only if its diagonal free reduct is, as will be shown below.

Now what if we only have cylindrifiers, that is dealing with $\Df_n$. Let $\A$ be the cylindric algebra constructed above. Assume that 
there a type $\Gamma$,
that is realized in every representation of $\A$ but has no witness using extra $k$ variables. Let $\B=\Rd_{df}\A$. 
Let $f:\B\to \C$ be a diagonal free representation of $\B$. 
The point is that though $\Gamma$ is realized in every {\it cylindric} represenation of $\A$,
there might be a representation of its diagonal free reduct that omits $\Gamma$, these are more, 
because we do not require preservation of the diagonal 
elements. This case definitely needs further research, and we are tempted to think that it is not easy.

But what we are sure
of is that the ordinary omitting types theorem fails
for $L_n$ without equality (that is for $\Df_n$) for $n\geq 3$. One way, among many other, is to construct a
representable countable atomic algebra $\A\in {\sf RDf}_n$, that is not completely representable.
the diagonal free reduct of the cylindric algebra constructed in \cite{ANT} is such.
Now what about $\Df_2$? We do not know. But if we have only {\it one} replacement then it fails.
This can be easily recovered from the above example when $n=2$. 
For higher dimensions, the result follows from the following example from 
\cite{AGNS}.

\begin{example}

Let $\B$ be an atomless Boolean set algebra with unit $U$, that has the following property:
For any distinct $u,v\in U$, there is $X\in B$ such that $u\in X$ and $v\in {}\sim X$.
For example $\B$ can be taken to be the Stone representation of some atomless Boolean algebra.
The cardinality of our constructed algebra will be the same as $|B|$.
Let $$R=\{X\times Y: X,Y\in \B\}$$
and
$$A=\{\bigcup S: S\subseteq R: |S|<\omega\}.$$
Then indeed we have $|R|=|A|=|B|$. 
Then $\A$ is a subalgebra of $\wp(^2U)$.
$$S=\{X\times \sim X: X\in B\}.$$
Thus $$S_0^1(\sum S)=U\times U$$
and
$$\sum \{S_{0}^1(Z): Z\in S\}=\emptyset.$$
For $n>2$, one takes $R=\{X_1\times\ldots\times X_n: X_i\in \B\}$ and the definition of $\A$ is the same. Then,
in this case, one takes $S$ to be
$X\times \sim X\times U\times\ldots\times U$
such that $X\in B$. The proof survives verbatim.
By taking $\B$ to be countable, then $\A$ can be countable, and so it violates the omitting types theorem.
\end{example}

$L_n$ can be looked upon  as a multi-dimensional modal logic, where cylindrifiers its most prominent citizens can be viewed as diamonds. 
Therefore, it is natural to study also omitting types (in relation to complete representations) for multi dimensional 
modal logics that do not have cylindrfiers, but have other
modalities like substitutions. Now let ${\sf SA}_n$ be the cylindrifier free reducts of polyadic algebras. Then:

\begin{theorem} For any ordinal $\alpha>1$, and any infinite cardinal $\kappa$, 
there is an atomic set algebra in ${\sf SA}_n$ with $|A|=\kappa$,  that is not  completely representable. 
In particular, $\A$ can be countable.
\end{theorem}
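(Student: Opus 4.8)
The plan is to write the algebra down explicitly, by ``thinning'' the construction of the preceding Example just enough to make it atomic while keeping the {\it finite--infinite} obstruction alive. Fix a set $U$ with $|U|=\kappa$ and call a sequence $s\in{}^nU$ \emph{injective} if $s_i\neq s_j$ whenever $i\neq j$; let $I\subseteq{}^nU$ be the set of all injective sequences, so that $|I|=\kappa$. I would take $\A$ to be the Boolean subalgebra of $\wp({}^nU)$ generated by the singletons $\{s\}$, $s\in I$, i.e.
$$A=\{\,F\subseteq I:|F|<\omega\,\}\ \cup\ \{\,{}^nU\setminus F:F\subseteq I,\ |F|<\omega\,\}.$$
The first step is to see that $A$ is closed under every substitution ${\sf s}_\tau$ (with $\tau$ a finite transformation of $n$), so that $\A\in{\sf SA}_n$. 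On $\wp({}^nU)$ each ${\sf s}_\tau$ is the preimage map along $s\mapsto s\circ\tau$, hence a complete Boolean endomorphism, so it suffices to track the generators: if $\tau$ is not injective then an injective sequence cannot be constant on a nontrivial $\tau$-fibre, so ${\sf s}_\tau\{s\}=\emptyset$, and if $\tau$ is injective then ${\sf s}_\tau\{s\}=\{s\circ\tau^{-1}\}$ is again an injective singleton. Thus $\A$ is a set algebra in ${\sf SA}_n$ with $|A|=\kappa$ (countable when $\kappa=\aleph_0$), and, being a set algebra, it is representable.

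Next I would check atomicity: the atoms of $\A$ are exactly the injective singletons $\{s\}$ ($s\in I$), and every nonzero element of $A$ lies above one — a nonempty finite $F\subseteq I$ obviously does, and a co-finite element ${}^nU\setminus F$ contains $\{s\}$ for any $s\in I\setminus F$, which exists because $I$ is infinite.

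The heart of the argument is that $\A$ is not completely representable, and I would get this from failure of complete additivity of a single replacement. Let $X=\{\{s\}:s\in I\}$ be the set of all atoms. Then $\sum^{\A}X=1_\A={}^nU$: any upper bound $Y\in A$ of $X$ contains $I$, and the only element of $A$ containing $I$ is ${}^nU$ itself (a finite set cannot contain $I$, and ${}^nU\setminus F\supseteq I$ forces $F=\emptyset$). This is precisely the finite--infinite jump of the Example — $\A$ is atomic yet contains no nonzero subset of the diagonal set $\{s\in{}^nU:s_0=s_1\}$, so the supremum of its atoms cannot stop below the unit. On the other hand, orienting the replacement $\sub{0}{1}$ so that $s\circ[0|1]$ has equal $0$-th and $1$-st entries, we get $\sub{0}{1}\{s\}=\{t\in{}^nU:t\circ[0|1]=s\}=\emptyset$ for every injective $s$, since $s_0\neq s_1$. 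Hence $\sum^{\A}\{\sub{0}{1}x:x\in X\}=0$, whereas $\sub{0}{1}(\sum^{\A}X)=\sub{0}{1}1_\A=1_\A\neq 0$. Since in any set algebra $\wp({}^nW)$ the operator $\sub{0}{1}$ is again a preimage map and so commutes with arbitrary unions, a complete representation $h\colon\A\to\wp({}^nW)$ would give
$${}^nW=h(1_\A)=\sub{0}{1}\Big(\bigcup_{s\in I}h(\{s\})\Big)=\bigcup_{s\in I}h\big(\sub{0}{1}\{s\}\big)=\bigcup_{s\in I}h(\emptyset)=\emptyset,$$
which is absurd; so $\A$ has no complete representation.

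The step I expect to be the real obstacle is the one italicised above: producing an \emph{atomic} algebra in which nevertheless no nonzero element lies below a diagonal set, so that the join of the atoms is driven all the way to $1$ exactly as in the atomless Example, while every atom is killed by a replacement. Taking the injective singletons as the atoms is what makes both happen simultaneously, and it also forces closure under the transposition substitutions (a coarser choice such as ``all $s$ with $s_0\neq s_1$'' would fail to be closed under ${\sf s}_{(1\,2)}$ when $n\geq 3$). For an infinite ordinal $\alpha$ in place of $n$ the same idea works after passing to a weak space $\{s\in{}^\alpha U:|\{i:s_i\neq p_i\}|<\omega\}$ for a fixed $p$ and adjusting the count of $|A|$; this needs a little extra care but no new idea.
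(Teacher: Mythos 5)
Your proof is correct and follows essentially the same route as the paper: an atomic set algebra all of whose atoms avoid the diagonal $D_{01}$ (so each is annihilated by ${\sf s}_0^1$) while their supremum is the unit (which ${\sf s}_0^1$ fixes), whence ${\sf s}_0^1$ fails to be completely additive and no complete representation can exist. The only difference is cosmetic: the paper obtains the same finite--cofinite structure from an abstract symmetric partition of a weak space plus a non-principal ultrafilter that hides the diagonal block inside the large elements, whereas you instantiate the atoms concretely as injective singletons.
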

%By enlarging an indexing set of the  partition of a large enough base.
%Now we start implementing our example. 
\begin{proof} Let $\alpha$ be the given ordinal.  Let $|U|=\mu$ be an infinite set and $|I|=\kappa$ be a cardinal such 
that $Q_n$, $n\in \kappa$,  is a family of $\alpha$-ary relations that form a partition of $V={}^{\alpha}U^{(p)}$, 
for some fixed sequence $p\in {}^{\alpha}U$. 
Let $i\in I$, and let $J=I\sim \{i\}$. Then of course $|I|=|J|$. Assume that $Q_i=D_{01}=\{s\in V: s_0=s_1\},$
and that each $Q_n$ is symmetric; that is for any $i,j\in n$, $S_{ij}Q_n=Q_n$. 
It is straightforward to show that such partitions exist.

%For example one can $U=\omega$, and for $n\geq 1$, one sets 
%$$Q_n=\{s\in {}^n\omega: s_0\neq s_1\text { and }\sum s_i=n\}.$$
%For every $k\in \omega$, define the following $n$-ary relation on $\mathbb{Z}$: 
%$$Q_k=\{(x-k,x\ldots,x)\circ q:x\in\mathbb{Z},\; q\in S_n\}.$$ It is clear that $Q_0$ is the 
%identity relation on $\mathbb{Z}$ and for every $k\neq l,$ $Q_k\cap Q_l=\phi$. Also it is clear that all $Q_k$ are symmetric.

Now fix $F$ a non-principal ultrafilter on $J$, that is $F\subseteq \mathcal{P}(J)$. 
For each $X\subseteq J$, define
\[
 R_X =
  \begin{cases}
   \bigcup \{Q_k: k\in X\} & \text { if }X\notin F, \\
   \bigcup \{Q_k: k\in X\cup \{i\}\}      &  \text { if } X\in F
  \end{cases}
\]

Let $$\A=\{R_X: X\subseteq I\sim \{i\}\}.$$
Notice that $|\A|\geq \kappa$. Also $\A$ is an atomic set algebra with unit $R_{J}$, and its atoms are $R_{\{k\}}=Q_k$ for $k\in J$.
(Since $F$ is non-principal, so $\{k\}\notin F$ for every $k$). This can be proved exactly like in \cite{AGNS}. 
The subalgebra generated by the atoms is as required. Indeed in such algebras $s_0^1$ is not completely additive, 
and it can be checked that a complete representation forces all operation to be completely additive.
For $n=2$, one requires that each $Q_n$ has domain and range equal to $U$. 
This is not hard to do. 

\end{proof}

We should also mention that this example shows that Pinters algebra for all dimensions may not be completely representable 
answering an implicit question
of Hodkinson's \cite{AU} top of p. 260.
%\end{proof}

From this we get, contrary to a current wide spread belief that

\begin{theorem} There are atomic $\PA_2$s that are not completely representable; 
furthermore thay can be countable. However, the class of atomic $PA_2$ is elementary
\end{theorem}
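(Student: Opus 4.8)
The plan is to deduce both assertions from the preceding constructions. For the first part, that there are atomic $\PA_2$s that are not completely representable, I would simply invoke the preceding theorem about ${\sf SA}_n$: take $\alpha = 2$ in that statement, obtaining a countable atomic set algebra $\B\in{\sf SA}_2$ that is not completely representable, via the partition $Q_n$ of $V = {}^{2}U^{(p)}$ with $Q_i = D_{01}$ and each $Q_n$ symmetric, where additionally one requires (as noted at the end of that proof for the $n=2$ case) that each $Q_n$ has domain and range equal to $U$. The point is that this set algebra is actually the ${\sf SA}_2$-reduct of a genuine atomic $\PA_2$: the cylindrifiers ${\sf c}_0, {\sf c}_1$ on the field of subsets of $V$ generated by the $Q_k$'s are well defined because the domain/range conditions guarantee closure, and the diagonal $D_{01} = Q_i$ is already a member (indeed an atom) of the algebra. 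So the full polyadic structure is available. Then I would argue that complete representability of the $\PA_2$ would force complete additivity of all its operations, in particular of ${\sf s}_0^1$; but, exactly as in the cited ${\sf SA}_n$ argument and the \cite{AGNS} example, ${\sf s}_0^1$ fails to be completely additive here (the supremum of the $Q_k$ for $k$ ranging over a set in $F$ behaves differently from the $Q_k \cup Q_i$ block), so no complete representation can exist. Countability follows by taking $U$ countable and $|I| = \omega$.

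For the second part, that the class of atomic $\PA_2$s is elementary, the plan is to appeal to the general theory of two-dimensional polyadic equality-free algebras: unlike the situation for $n\geq 3$, in dimension two there is no rainbow or Monk-type obstruction, and atomicity is an elementary property already on the Boolean reduct provided one can express ``every nonzero element is above an atom'' — which is not first-order in general, so the real content is that for $\PA_2$ the class of atomic algebras coincides with an elementary class. I would establish this by showing that $\RPA_2$ (representable $\PA_2$s) is a finitely axiomatizable variety — this is classical for the two-dimensional case, the substitution-and-no-diagonal-no-cylindrifier fragment being very tame — and then that within this variety atomicity is preserved under elementary equivalence and ultraproducts, using that the atoms are definable by a first-order scheme because the algebras are ``small'' in the relevant sense (each atom is determined by finitely much data, and the number of atoms below any element is controlled). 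Concretely I would show the class of atomic members is closed under ultraproducts and ultraroots and elementary substructures, hence elementary by the Keisler–Shelah criterion.

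The order of steps: first state and reduce the non-complete-representability claim to the ${\sf SA}_2$ theorem already proved, checking only that the domain/range-adjusted partition yields a bona fide $\PA_2$ and that complete additivity of ${\sf s}_0^1$ fails; second, record that complete representability implies complete additivity of all operations (a short standard lemma, which I would cite as used in the ${\sf SA}_n$ proof); third, set up the elementarity claim by axiomatizing $\RPA_2$ and then proving closure of its atomic part under ultraproducts and elementary equivalence.

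The hard part will be the second assertion, that atomic $\PA_2$ is elementary: this is genuinely a positive result going against, as the text says, a ``current wide spread belief,'' so the delicate point is pinning down why the pathologies that make atomic $\CA_n$ (and atomic $\PA_n$ for $n\geq 3$) non-elementary simply cannot be encoded with only substitutions in two dimensions. I expect the crux to be an explicit description of the atom structures of simple atomic $\PA_2$s — showing each is essentially a partition of a square with a distinguished symmetric ``diagonal'' block — from which a first-order axiomatization of ``atomic'' (relative to the variety $\RPA_2$, which must itself be shown to equal $\PA_2$, i.e. every $\PA_2$ is representable) can be read off, so that the class is closed under ultraproducts. Verifying that every $\PA_2$ is representable, and that no ``infinitely many blurs'' phenomenon survives the absence of cylindrifiers, is where the real work lies.
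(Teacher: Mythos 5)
Your treatment of the first assertion is essentially the paper's: it too obtains the countable atomic, non-completely-representable $\PA_2$ by rerunning the ${\sf SA}_n$ construction with each $Q_n$ having domain and range equal to $U$ (so that cylindrifications of the generated elements are $0$ or the unit, and the algebra is closed under ${\sf c}_0,{\sf c}_1$), the obstruction being the failure of complete additivity of ${\sf s}_0^1$. One small slip: $D_{01}=Q_i$ is \emph{not} an atom, nor even an element, of the resulting algebra --- the index $i$ is excluded from $J$ and $Q_i$ is smeared over the $R_X$ with $X\in F$ --- but since $\PA_2$ has no diagonals this is harmless.

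The second assertion is where your plan goes astray, on two counts. First, you claim that ``every nonzero element is above an atom'' is ``not first-order in general''; it is first-order, namely $\forall y(y\neq 0\to\exists x(\At(x)\land x\leq y))$ with $\At(x)$ the usual formula, so if the claim were literally about atomicity it would be immediate and none of your machinery (finite axiomatizability of the representable $\PA_2$s, Keisler--Shelah, closure under ultraroots) would be needed. Second, and more importantly, what the paper actually proves at this point is that the class of \emph{completely representable} $\PA_2$s is elementary (the statement's wording notwithstanding): it exhibits an explicit axiomatization $\Sigma$ consisting of the polyadic axioms together with the sentences $\psi_{i,j}$, $i\neq j<2$, asserting that below every nonzero $y$ there is an atom $x$ with ${\sf s}_i^jx\neq 0$ and ${\sf s}_i^jx\leq y$, and asserts that ${\bf Mod}(\Sigma)$ is exactly the class of completely representable $\PA_2$s. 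The point of the ``However'' is thus a contrast with dimension $\geq 3$: in dimension $2$ the only obstruction to complete representability is the failure of complete additivity of the substitutions at the atoms, and that failure is first-order expressible. Your plan never engages with this; proving that every $\PA_2$ is representable, or that the atomic $\PA_2$s form an elementary class, would not yield elementarity of the completely representable ones. What is needed are the two directions: that $\Sigma$ holds in any complete representation (complete representations preserve arbitrary sums, forcing the atom-witnessing condition), and conversely that an atomic $\PA_2$ satisfying the $\psi_{i,j}$ is completely additive and hence completely representable, using the fact recorded elsewhere in the paper that completely additive atomic polyadic algebras are completely representable.
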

\begin{proof}

We modify the above proof as follows. Take each $Q_n$ so it has both domain and range equal to $U$. 
This is possible; indeed it is easy to find such a 
partition of ${}^2U$.
Let $\At(x)$ be the first order formula expressing that $x$ is an atom. That is $\At(x)$ is the formula
$x\neq 0\land (\forall y)(y\leq x\to y=0\lor y=x)$. For distinct $i,j<n$ let $\psi_{i,j}$ be the formula:
$y\neq 0\to \exists x(\At(x)\land {\sf s}_i^jx\neq 0\land {\sf s}_i^jx\leq y).$ Let $\Sigma$ be the polyadic axioms  together
with the $\psi_{i,j}$s, as defined above $(i,j\in \{0,1\})$.
Then it is not hard to check that 
${\bf Mod}(\Sigma)$ is the class of completely representable $\PA_2$s. 
\end{proof}

When we have the diagonal element $D_{01}$ it also does not work for $2$ dimensions. 
(In particular, it does not settle the cylindric case, a task already implemented by 
Andr\'eka and N\'emeti).

\subsection{Generalizing the constructions of Hirsch and Hodkinson to other cylindric-like algebras}

In definition 3.6.3 \cite{HHbook} a cylindric atom structure is defined from a family $K$ of $L$ structures, closed under forming subalgebra. 
This class is  formulated in a language
$L$ of relation symbols having arity $<n$. Call this atom structure $\rho(K)$. 
The atom structure, denote it by ${\cal F}$  can be turned easily into a polyadic equality atom structure 
by defining the binary  relations correponding to the substitution $s_{i,j}$ by: $R_{ij}=\{[f], [g]: f, g \in {\cal F}: f=g\circ [i,j]\}.$

Two examples are given of such classes. For example the (rainbow) class defined in 3.6.9 \cite{HHbook}, 
as an example of  classes
based on on graph.
Fix a graph $\Gamma$. The rainbow polyadic equality algebra based on this graph is denoted by $R(\Gamma)$ 
is the complex algebra of $\rho(K(\Gamma))$, namely $\Cm\rho(K(\Gamma))$.
It is proved that if $\Gamma$ is a countable graph, then the cylindric algebra $R(\Gamma)$ is completely representable
if and only if $\Gamma$ contains a reflexive node or an infinite clique, 
This proof can be very easily checked to work for polyadic equality algebras, it also works for
diagonal free algebras, because arities of relation symbols are $<n$.

Define $K_k$ and $\Gamma$ as in corollary 3.7.1 in \cite{HHbook}. Then $R(\Gamma)$ is  completely representable.
But $\Gamma$ has arbirary large cliques, hence it is 
elementary equivalent to a countable graph $\Delta$ with an infinite clique. Then $R(\Delta)\equiv R(\Gamma)$,
The latter is completely representable, the {\it diagonal free reduct} of the former is not. (A complete representation of the diagonal fre reduct induces a complete representation of the algebra 
itself a result of Hodkinson \cite{AU}.)
Notice that $\Delta\equiv \Gamma$ as first order structures.

The other class of algebras is the generalization of the one we dealt with in this paper. 
It gives examples of strong representability of atom structures built on graphs,
pending on the chromatic number of th graph.

We are in a position to say:

{\it All such results extend to any class of algebra between diagonal free algebras and  polyadic equality case, by observing two things:} 

(1) The results of Hirsch and Hodkinson extend to the polyadic equality case, substitutions corresponding to transpositions 
on all atom structures considered can be defined in an obvious way.

(2) Constructed algebras are based on models
in signatures whose relation symbols have rank $<n$, which means that the algebras are 
generated by elements whose dimension sets are strictly less than $n$;
this makes the passage from the case with equality to non equality
possible.

\begin{theorem} Let $\D$ be a polyadic equality algebra of dimension $n\geq 3$, that is generated by the set $\{x\in D: \Delta x\neq n\}.$
Then if $\Rd_{df}\D$ is completely representable, then so is $\D$.
\end{theorem}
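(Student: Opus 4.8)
The plan is to lift a complete representation of the diagonal-free reduct $\Rd_{df}\D$ to a complete representation of $\D$ itself, using the hypothesis that $\D$ is generated by the low-dimensional elements $\{x\in D:\Delta x\neq n\}$. First I would recall the relevant characterization from the excerpt: for a countable atomic $\D\in\PEA_n$, complete representability is equivalent to being in $S_c\Nr_n\PEA_\omega$ (the polyadic-equality analogue of the Sayed Ahmed--Hirsch theorem mentioned above), and the same holds for $\Df_n$. So the natural strategy is to show that a special (complete) neat embedding of $\Rd_{df}\D$ can be upgraded to one of $\D$. Alternatively, and perhaps more cleanly, one can work directly with representations: a complete representation of an atomic algebra is the same as an atomic representation that respects all suprema of atoms, and such a thing exists iff \pe\ has a winning strategy in the relevant atomic game played on $\At\D$.

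The key steps, in order, would be: (1) Note that $\At\D=\At(\Rd_{df}\D)$ since passing to the diagonal-free reduct does not change the Boolean structure, hence not the atoms; so a complete representation of $\Rd_{df}\D$ is a representation $h$ of the $\Df_n$-reduct that is atomic and completely additive on atoms. (2) Observe that the diagonal elements $\diag{i}{j}$ have dimension set of size $\leq 2<n$, hence lie in the generating set, and more importantly that in a set representation the diagonal elements are forced: in any representation on a base set $U$ with unit $V\subseteq{}^nU$, the only candidate for $h(\diag{i}{j})$ compatible with the cylindric/polyadic axioms is $\{s\in V: s_i=s_j\}$. So the $\Df_n$-representation $h$ already "knows" where the diagonals must go; the content is that $h$ extended by these sets still respects the operations of $\D$. (3) Here is where the generation hypothesis does the work: since $\D$ is generated by elements of dimension $<n$, every element of $\D$ is a term built from such elements using Boolean operations, single cylindrifiers, and substitutions $\sub{i}{j}$; because each generator $x$ satisfies $\cyl{k}x=x$ for the "missing" index $k\notin\Delta x$, one can express the diagonals themselves in terms of the $\Df_n$-operations acting on the generators together with the substitution operators, or conversely express the substitutions via cylindrifiers and diagonals in the standard way — the point being that the extra operations of $\PEA_n$ over $\Df_n$, when restricted to the subalgebra generated by low-dimensional elements, are definable from the $\Df_n$-operations once we fix the base set. (4) Conclude that the map $h$, which is already a complete $\Df_n$-representation, automatically commutes with $\diag{i}{j}$ and with the $\sub{i}{j}$ and transposition operators on generators, hence on all of $\D$ by the generation assumption; atomicity and complete additivity transfer verbatim since the Boolean reduct and atom set are unchanged. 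Therefore $h$ is a complete representation of $\D$.

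The main obstacle I anticipate is step (3): making precise the sense in which the diagonal (and substitution) operations are "redundant" on the subalgebra generated by the $<n$-dimensional elements. The subtlety is that $\D$ is generated by these elements as a $\PEA_n$, not as a $\Df_n$, so one cannot simply say the generators form a $\Df_n$-subalgebra whose diagonals vanish; rather one must track how $\diag{i}{j}$ interacts with a generator $x$ whose $k$-th cylindrification is trivial and show that $\cyl{k}(x\cdot\diag{i}{k})$-type manipulations recover the missing data from the reduct. Hodkinson's original argument (cited as \cite{AU}) that a complete representation of the $\Df$-reduct of a $\CA$ generated by $<n$-dimensional elements induces one of the $\CA$ is exactly the template; the work here is checking that his argument is insensitive to the presence of the extra polyadic (transposition) operations — which it should be, precisely because transpositions are generated by substitutions and those in turn by cylindrifiers and diagonals, and the diagonals are pinned down set-theoretically. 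A secondary, more routine point to verify is that one does not need countability for this direction: the lifting is purely algebraic once the $\Df_n$-representation is in hand, so the theorem holds for $\D$ of arbitrary cardinality, consistent with its unrestricted statement.
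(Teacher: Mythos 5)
Your overall plan (lift the complete $\Df_n$-representation of $\Rd_{df}\D$ and exploit the generation hypothesis, following Hodkinson's template) is the right one, but there is a genuine gap exactly at the point you flag as the ``main obstacle,'' and your step (2) is false as stated. A complete representation $h$ of $\Rd_{df}\D$ is only required to commute with the Boolean operations and the cylindrifiers (and substitutions); its unit is in general a rectangle $V=\prod_{i<n}U_i$ with possibly pairwise disjoint factors $U_i$, so the set $\{s\in V: s_i=s_j\}$ may well be empty, and nothing whatsoever forces $h({\sf d}_{ij})$ to equal it. The diagonal elements are \emph{not} pinned down set-theoretically by a diagonal-free representation; recovering a correct interpretation for them is the entire content of the theorem, so your step (4) (``$h$ automatically commutes with ${\sf d}_{ij}$'') does not follow from anything you have written.

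What is actually needed is the following chain, of which your outline contains none of the three links. First, pass from the rectangular unit $\prod_{i<n}U_i$ to a cubic one ${}^nU$ by taking $U$ to be the disjoint union of the $U_i$ and pulling $h$ back along suitable maps $t_i:U\to U_i$; this yields a complete $\Df_n$-representation $g$ on a cube. Second --- and this is the key idea --- read off from $g({\sf d}_{ij})$ an equivalence relation $\sim$ on the base $U$, and consider the set $E$ of all $d\in D$ whose image is a union of $\sim$-classes; one checks that $E$ contains every $d$ with $\Delta d\neq n$ (using $\Delta {\sf d}_{ij}=\{i,j\}$) and that $E$ is the domain of a \emph{complete} subalgebra, where complete additivity of the given representation is essential, so that the generation hypothesis gives $E=D$; only then can one quotient the base by $\sim$ to send ${\sf d}_{ij}$ to the true diagonal. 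Third, this argument is carried out for simple $\D$, and the general case requires decomposing a complete representation $h:\D\to\prod_{k}Q_k$ into its projections, verifying that each factor is simple (via the discriminator term ${\sf c}_{(n)}x$) and is still completely generated by its low-dimensional elements, and reassembling. You correctly sense where the difficulty lies, but the mechanism you propose for resolving it (definability of the diagonals from the $\Df_n$-operations on generators) is not the one that works.
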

\begin{proof} We asume that that $\D$ is simple, the general case is not much harder. Let $h: \D\to \wp(V)$ be a complete representation, where $V=\prod_{i<n}U_i$ for sets $U_i$.
We can assume that $U_i=U_j$ for all $i,j<n$, and if $s\in V$, $i,j<n$ and $a_i=a_j$ then $a\in h(d_{ij}$.
Let $U$ be the disjoint union of the $U_i$s. 
Let $t_i:U\to U_i$ be the surjection defined by $t_i(u)=(s_j(u))_i$.
Let $g: \D\to \wp(^nU)$ be defined via 
$$d\mapsto \{s\in {}^nU: (t_0(a_0),\ldots, t_{n-1}(a_{n-1}))\in h(d)\}.$$
Then $g$ is a complete representation of $\D.$ Now suppose $s\in {}^nU$, satisfies $s_i=s_j$ with $a_i\in U_k$, say, where $k<n$. 
Let $\bar{b}=s_k(a_i)=s_k(a_j)\in h(\delta).$ Then $t_i(a_i)=b_i$ and $t_j(a_j)=b_j$, so $(t_i(a_i): i<n)$ 
agrees with $\bar{b}$ on coordinates $i,j$. Since $\bar{b}\in h(\delta)$ and 
$\Delta d_{ij}=\{i,j\}$, then $(t_i(a_i): i<n)\in h(d_{ij}$ and so $s\in g(d_{ij}),$ as required.

Now define $\sim_{ij}=\{(a_i, a_j): \bar{a}\in h(d_{ij})$. Then it easy to 
check that $\sim_{01}=\sim_{i,j}$ is an equivalence relation on $U$. For 
$s,t\in {}^nU$, define $s\sim t$, if $s_i\sim t_i$ for each $i<n$, then $\sim$ is an equivalence relation
on $^nU$. Let
$$E=\{d\in D: h(d)\text { is a union of $\sim$ classes }\}.$$
Then $$\{d\in D: \Delta d\neq n\}\subseteq E.$$
Furthermore, $E$ is the domain of a complete subalgebra of $\C$. Hence $E=C$. Now define $V=U/\sim_{01}$, and
define $g:\C\to \wp(^nV)$ via
$$c\mapsto \{(\bar{a}/\sim_{01}): \bar{a}\in h(c).$$
Then $g$ is a complete representation.

Now we drop the assumption that $\D$ is simple. Suppose that $h:\D\to \prod_{k\in K} Q_k$ is a complete representation. 
Fix $k\in K$, let $\pi_k: Q\to Q_k$ be the canonical projection, and let 
$\D_k=rng(\pi_k\circ h)$. We define diagonal elements in $\D_k$ by $d_{ij}=\pi_k(h^{\C}(d_{ij}))$. This expands $\D_k$ 
to a cylindric-type algebra $\C_k$ that is a homomorphic image of $\C$, and hence
is a cylindric algebra with diagonal free reduct $\D_k$. Then the inclusion map $i_k:\D_k\to Q_k$ is a complete
representation of $D_k$. 
Since
$$\pi_k[h[\{c\in C: \Delta c\neq n\}]\subseteq \{c\in C_k: \Delta c\neq n\}$$ 
and $\pi_k, h$ preserve arbitrary sums, then $C_k$ is completely generated by $\{c\in C_k: \Delta c\neq n\}$. 
Now $c_{(n)}x$ is a discriminator term in $Q_k$, so $D_k$ is simple.
So by the above $\C_k$ has complete represenation $g_k:\C_k\to Q_k'$. Define
$g: \C\to \prod_{k\in K}Q_k'$ via
$$g(c)_k=g_k(\pi_k(h(c))).$$
Then $g$ defines a complete representation.
\end{proof}

\subsection{Infinite dimensional case}

The infinite dimensional analogue of such results is completely unknown, and it is stated as an open problem in \cite{hh}.
However, Monk proved non finite axiomatizability of the representable algebras using a lifting argument. 
Here we do the same thing with anti-Monk algebras,
in the hope of getting a weakly representable $\omega$ dimensional atom structure that is not strongly representable.

Let $\Gamma_r$ be a sequence of Erdos graphs. Let $\A_n=\A(n,\Delta_n)$ be the representable atomic algebra of dimension $n$, based  on 
$\Delta_n$, the disjoint union of $\Gamma_r$, $r>n$. Then we know that $\At\A_n$ is a strongly representable atom structure of dimension $n$.
Let $\A_n^+$ be an $\omega$ dimensonal algebra such that 
$\Rd_n\A_{n}^+=\A_n$; we can assume that for $n<m$, there is an $x\in \A_m$, such that  $\A_{n}\cong \Rd\Rl_x\A_{m}$.
Now let $\A=\prod_{n\in F}\A_n^+$, be any non trivial ultarproduct of the $\A_n^+$s, 
then $\A$ is an atomic  $\RCA_{\omega}$ that has an atom structure that is based on the graph $\Delta=\prod \Delta_n$, 
with chromatic number $2$, hence it is only weakly representable.

Another idea is that maybe the construction {\it can} be lifted to infinite dimensions, using {\it weak} set algebras, 
which are representable, based also a labelled the graph $M$ based on $\G$,
replacing $L^n$ by the logic that is like first order logic, but allows infinitary predicates of arity at most $\omega$, call it $L^{\omega}$,
and $L^n_{\infty,\omega}$ by the extention of $L^{\omega}$ with infinite conjunctions.

So we will also have two {\it relativized} set algebras. But there are several difficulties. 
First the graph or model $M$ has to be $n$ homogeneous for every $n$.
We do not think this is a major problem. 
One have to make sure that certain theorems on partial isomorphisms and back and forth systems lifts to the model theory 
of the new logic (we have to be also aware of the fact that we use a different semantics, assignments will have to be eventually constant).

What to our mind, is a major problem is the following. If we insist on including cliques,
then in the limiting case, we need large enough cliques to ensure the representability
of the former algebra, to get back and forth systems for partial isomorphism with finite domain, but arbitrarily large,
but graphs based on such cliques will not have a finite chromatic number, 
which means that we may end up with a representable completion 
as well; Ramseys theorem depending on the existence of a finite colouring of the graph will not apply.

Another idea is that we use Erdos' graphs somehow, and some duality. We have a graph $G$ that is a limit, that is, an ultraproduct of a sequence of graphs $G_r$
each have a finite colouring $>r$, but $G$ has a finite colouring. For each such graph there is a countable 
model $M_r$ defined as above. The desired model on which the two weak set algebras are based is the 
disjoint union of the weak spaces with base $M_r$.

%\end{document}

\section{Discussion of  two other longstanding open problems}
\subsection{Completions of subneat reducts}

The existence of a weakly representable atom structure that is not strongly representable of dimension $n\geq 3$,
gives a representable algebra, namely, the term algebra, such that its 
completion the complex algebra is not representable, from which we infer
that the class $\RCA_n$ is not closed under completions, because the latter is the completion of the former. 
We give a sufficient condition that gives  the stronger result for various varieties consisting of subneat reducts, namely $S\Nr_n\CA_k$,
$n\geq 3$, $k>n$.

Whether such varietes are closed under completions, appears an open question in \cite{neat}, and  much earlier in \cite{hh}.
Let $\RA_n$ be the class of subalgebras of atomic relation algebras having $n$ dimensional relational basis.
Then ${\bf S}\Ra\CA_n\subseteq \RA_n$ \cite{HHbook}. 
The full complex algebra of an atom structure $S$
will be denoted by $\Cm S$, and the term algebra by $\Tm S.$
$S$ could be a relation atom structure or a cylindric atom structure.

\begin{theorem}\label{t} Let $n\geq 3$. Assume that for any simple atomic relation algebra $\cal A$ with atom structure $S$, 
there is a cylindric atom structure $H$ such that:
\begin{enumarab}
\item If $\Tm S\in \RRA$, then $\Tm H\in \RCA_n$.
\item $\Cm S$ is embeddable in $\Ra$ reduct of $\Cm H$.
\end{enumarab}
Then for all $k\geq 3$, $S\Nr_n\CA_{n+k}$ is not closed under completions.

\end{theorem}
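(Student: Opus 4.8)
The idea is to use the hypotheses to transfer a known non-closure result from relation algebras to the cylindric setting, exploiting that $\mathbf{S}\Ra\CA_{n+k}$-type classes are, via the $\Ra$-reduct functor, tied to relation-algebra subalgebras with relational bases. First I would invoke the existence, in the literature (Hirsch--Hodkinson, and the blow-up-and-blur machinery of Andr\'eka--N\'emeti discussed in Section 1), of a simple atomic relation algebra $\cal A$ with atom structure $S$ such that $\Tm S\in \RRA$ but $\Cm S\notin \RRA$ — indeed one can even demand $\Tm S\in \mathbf{S}\Ra\CA_{n+k}$ for the relevant $k$, which is exactly what the blow-up-and-blur construction delivers (the term algebra embeds into enough extra dimensions while the complex algebra fails to be representable because of the finite--infinite contradiction). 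This is the input; the theorem's hypotheses then manufacture a cylindric companion.

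**Key steps in order.** Apply the hypothesis to this $S$ to obtain a cylindric atom structure $H$ with (1) $\Tm H\in \RCA_n$ and (2) $\Cm S$ embeddable in $\Ra$ reduct of $\Cm H$. Next I would argue that $\Tm H$, being a representable atomic $\CA_n$ obtained from this construction, in fact lies in $S\Nr_n\CA_{n+k}$ — this should follow because the construction producing $H$ is designed so that the term algebra neatly embeds into $n+k$ extra dimensions (this is the cylindric analogue of $\Tm S$ having an $(n+k)$-dimensional relational basis, and it is precisely the kind of conclusion the blow-up-and-blur lemmas in \cite{ANT} give: $\Tm\B_n\subseteq\C=\Bb_n(\M,J,E)$ with $\Bb_m=\Nr_m\Bb_n$). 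Then the completion of $\Tm H$ inside the ambient Boolean algebra is $\Cm H$. The final move is to show $\Cm H\notin S\Nr_n\CA_{n+k}$, hence certainly $\Cm H\notin\RCA_n$: if $\Cm H$ were in $S\Nr_n\CA_{n+k}$, then since $\mathbf{S}\Ra\CA_{n+k}\subseteq \RA_{n+k}$ and the $\Ra$-reduct of a (sub)neat reduct of a $\CA_{n+k}$ is a (sub)algebra of the $\Ra$-reduct side, the $\Ra$ reduct of $\Cm H$ would be representable, or at least lie in a class forcing representability of its subalgebras; but $\Cm S$ embeds into it, forcing $\Cm S\in \RRA$ (or into $\RA_3$, say), contradicting the choice of $S$. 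Therefore $S\Nr_n\CA_{n+k}$ contains the atomic algebra $\Tm H$ but not its completion $\Cm H$, so it is not closed under completions, and since $k\geq 3$ was arbitrary the theorem follows.

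**The main obstacle.** The delicate point is the transfer in the last step: one must be careful that non-representability (or non-membership in $\RA_{n+k}$) of $\Cm S$ genuinely propagates \emph{upward} through the $\Ra$-reduct to an obstruction for $\Cm H\in S\Nr_n\CA_{n+k}$. Concretely, I would need: if $\mathfrak B\in S\Nr_n\CA_{n+k}$ then $\Ra\mathfrak B$ (suitably understood, or the relation-algebra reduct of a neat-reduct witness) belongs to $\mathbf{S}\Ra\CA_{k}$ and hence to $\RA_k$, which is closed under subalgebras — so $\Cm S$, embedding into it, would be in $\RA_k$, contradicting $\Cm S\notin\RRA$ provided $k\geq 3$ is enough to force the contradiction ($\RA_3\supsetneq\RRA$ but for the \emph{specific} $\Cm S$ coming from the construction one does get failure of having a $k$-dimensional relational basis). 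Pinning down exactly which ``$\RA$-like'' class the embedding lands in, and checking that the chosen $\Cm S$ sits outside it, is where the real work lies; everything else — the completion being $\Cm H$, atomicity, the membership $\Tm H\in S\Nr_n\CA_{n+k}$ — is bookkeeping that follows from the hypotheses and the cited lemmas of \cite{ANT} and \cite{HHbook}.
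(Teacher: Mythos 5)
Your proposal follows essentially the same route as the paper: pick a relation atom structure $S$ whose term algebra is representable but whose complex algebra is badly non-representable, feed it into the hypothesis to get $H$, note $\Tm H\in \RCA_n\subseteq {\bf S}\Nr_n\CA_{n+k}$, and rule out $\Cm H\in {\bf S}\Nr_n\CA_{n+k}$ by pushing the embedding $\Cm S\hookrightarrow \Ra\Cm H$ through the chain ${\bf S}\Nr_n\CA_{n+k}\Rightarrow \Ra\text{-reduct}\in {\bf S}\Ra\CA_{n+k}\subseteq \RA_{n+k}$, which is closed under subalgebras. You also correctly isolate the one delicate point.

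That point, however, is left unsecured in your write-up, and it is the only real gap. You propose to take $S$ from the blow-up-and-blur construction, which delivers $\Tm S\in\RRA$ and $\Cm S\notin\RRA$; as you yourself observe, $\Cm S\notin\RRA$ is \emph{not} enough, since $\RRA\subsetneq \RA_{n+k}$ and the contradiction needs $\Cm S\notin{\bf S}\Ra\CA_{n+k}$. Your parenthetical claim that ``for the specific $\Cm S$ coming from the construction one does get failure of having a $k$-dimensional relational basis'' is exactly what must be proved, and it is not automatic for the blow-up-and-blur algebras. The paper avoids this by choosing a different input: it takes $S$ from Hirsch--Hodkinson (Lemmas 17.34--17.36 of \cite{HHbook}) with $\Tm S\in\RRA$ and $\Cm S\notin \RA_6$ outright; since ${\bf S}\Ra\CA_{n+k}\subseteq{\bf S}\Ra\CA_6\subseteq \RA_6$ whenever $n+k\geq 6$, i.e.\ whenever $n,k\geq 3$, this immediately yields $\Cm S\notin{\bf S}\Ra\CA_{n+k}$ for every $k\geq 3$ (indeed only $k=3$ needs checking, the classes being decreasing in $k$ while $\Tm H\in\RCA_n={\bf S}\Nr_n\CA_\omega$ lies in all of them). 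With that substitution of input your argument closes; the extra effort you spend justifying $\Tm H\in {\bf S}\Nr_n\CA_{n+k}$ via the lemmas of \cite{ANT} is unnecessary, as it already follows from $\Tm H\in\RCA_n$ and the Henkin neat embedding theorem.
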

\begin{demo}{Proof} Let $S$ be a relation atom structure such that $\Tm S$ is representable while $\Cm S\notin \RA_6$.
Such an atom structure exists \cite{HHbook} Lemmas 17.34-17.36 and are finite. 
It follows that $\Cm S\notin {\bf S}\Ra\CA_n$.
Let $H$ be the $\CA_n$ atom structure provided by the hypothesis of the previous theorem.  
Then $\Tm H\in \RCA_n$. We claim that $\Cm H\notin {\bf S}\Nr_n\CA_{n+k}$, $k\geq 3$. 
For assume not, i.e. assume that $\Cm H\in {\bf S}\Nr_n\CA_{n+k}$, $k\geq 3$.
We have $\Cm S$ is embeddable in $\Ra\Cm H.$  But then the latter is in ${\bf S}\Ra\CA_6$
and so is $\Cm S$, which is not the case.
 \end{demo}

%This prompts the following question:

\begin{corollary} Assume the hypothesis in \ref{t}. Then the following hold:

\begin{enumarab}

\item There exist two atomic 
cylindric algebras of dimension $n$  with the same atom structure, only one of which is 
representable.

\item For $n\geq 3$ and $k\geq 3$, ${\bf S}\Nr_n\CA_{n+k}$
is not closed under completions and is not atom-canonical. 
In particular, $\RCA_n$ is not atom-canonical.

\item There exists a non-representable $\CA_n$ with a dense representable
subalgebra.

\item For $n\geq 3$ and $k\geq 3$,  ${\bf S}\Nr_n\CA_{n+k}$ 
is not Sahlqvist axiomatizable. In particular, $\RCA_n$ is not Sahlqvist axiomatizable.

\item There exists an atomic representable 
$\CA_n$ with no complete representation.

\end{enumarab}
\end{corollary}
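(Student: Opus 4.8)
The plan is to extract everything from the single cylindric atom structure $H$ manufactured in the proof of Theorem~\ref{t}. Under the stated hypothesis we have $\Tm H\in \RCA_n$ while $\Cm H\notin {\bf S}\Nr_n\CA_{n+k}$ for every $k\geq 3$; in particular, since $\RCA_n={\bf S}\Nr_n\CA_{\omega}\subseteq {\bf S}\Nr_n\CA_{n+k}$, the algebra $\Cm H$ is not even representable. By definition $\Tm H$ is the subalgebra of $\Cm H$ generated by the atoms, so $\At\Tm H=\At\Cm H=H$ and $\Tm H$ is dense in $\Cm H$. This one observation already yields (1), via the pair $\Tm H$, $\Cm H$ (two atomic $\CA_n$'s on the atom structure $H$, only the first representable), and (3), via the non-representable $\Cm H$ with dense representable subalgebra $\Tm H$.

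For (2): $\Tm H$ is an atomic algebra lying in $\RCA_n\subseteq {\bf S}\Nr_n\CA_{n+k}$, and its completion is precisely $\Cm H$, which lies outside the class; hence ${\bf S}\Nr_n\CA_{n+k}$ is not closed under completions. Since atom-canonicity of a variety $V$ means that $\A\in V$ atomic implies $\Cm\At\A\in V$, and here $\Cm\At\Tm H=\Cm H$, the same witness shows ${\bf S}\Nr_n\CA_{n+k}$ is not atom-canonical; replacing the target class by the weaker $\RCA_n$ and using $\Cm H\notin\RCA_n$ gives the ``in particular'' clause. For (4) I would invoke the standard meta-theorem from the theory of Boolean algebras with operators: any variety axiomatized by Sahlqvist equations is atom-canonical (indeed canonical, and closed under $\At\A\mapsto\Cm\At\A$). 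As part (2) shows ${\bf S}\Nr_n\CA_{n+k}$ fails atom-canonicity, it cannot be Sahlqvist-axiomatizable, and likewise $\RCA_n$.

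For (5) take $\A=\Tm H$, an atomic representable $\CA_n$. If $\A$ had a complete representation $h\colon \A\to \wp(V)$, then, a complete representation being exactly a representation that is also a complete Boolean homomorphism, $h$ would carry the supremum of all atoms to $V$ and hence extend to a representation of the completion $\Cm\At\A=\Cm H$; this contradicts $\Cm H\notin\RCA_n$, so $\A$ has no complete representation.

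The whole corollary is thus a short chain of routine consequences of the object produced (conditionally) in Theorem~\ref{t}; the only external inputs are the Sahlqvist $\Rightarrow$ atom-canonical meta-theorem used in (4) and the ``complete representation of $\A$ yields a representation of $\Cm\At\A$'' fact used in (5). The only place calling for a little care is the ``in particular $\RCA_n$'' statements, which need $\Cm H\notin\RCA_n$ rather than merely $\Cm H\notin {\bf S}\Nr_n\CA_{n+k}$ — but this is immediate from $\RCA_n\subseteq {\bf S}\Nr_n\CA_{n+k}$. The genuine obstacle lies upstream, in verifying the hypothesis of Theorem~\ref{t} itself; granting that hypothesis, there is essentially no difficulty in the present corollary.
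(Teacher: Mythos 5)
Your derivation is correct and is surely the intended argument: the paper itself gives no proof of this corollary beyond a citation to \cite{t}, and the standard chain you describe — $\Tm H$ dense and representable in the non-representable $\Cm H$, completions/atom-canonicity from that pair, Sahlqvist via the "Sahlqvist implies atom-canonical" meta-theorem, and (5) via "a complete representation of $\A$ induces a representation of $\Cm\At\A$" — is exactly what that reference carries out. The one point you rightly flag, that the "in particular $\RCA_n$" clauses need $\Cm H\notin\RCA_n$, is indeed immediate from $\RCA_n={\bf S}\Nr_n\CA_{\omega}\subseteq {\bf S}\Nr_n\CA_{n+k}$.
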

\begin{demo}{Proof} \cite{t}
\end{demo}

Monk and Maddux constructs such an $H$ for $n=3$ and Hodkinson constructs  an $H,$ but $H$ does not satisfy 2 \cite{AU}.

\subsection{Canonical extensions of infinite dimensional representable algebras}

The construction of weakly representable atom structures that are not strongly representable 
also gives an example of an atomic algebra, namely, the term algebra, that is representable but not completely representable.
For if it were completely representable, then this complete representation will induce a representation of the complex algebra.

Monk prove that for finite dimensions $\A$ is representable if its canonical extension $\A^+$ is representable, 
The analogous problem for cylindric algebras of infinite dimensions is a long standing open one, and it is among the open problems given 
in \cite{HHbbok}. We do not even know whether canonical extensions of 
representable algebras are even completely epresentable on {\it weak} units.
%\begin{theorem} If $\A$ is representable then $A^+$ is completely representable on weak units.
%\end{theorem
In what follows we discuss possibilities of solving this problem, and our discussins will be intervened with non trivial theorems.
that can shed light on the problem in hand. 
We use the notation in \cite{neat}.

\subsubsection{Omitting types again, via Vaught's theorem}

Suppose that $\A\in S_c\Nr_{\alpha}\CA_{\alpha+\omega}$, and $(X_i: i<covK)$ is  a family of subsets of $\A$ 
such that $\prod X_i=0$. Then we can find a weak representation preserving 
those meets using the standard Baire category argument
The statement we prove now is mentioned in \cite{neat}, it is denoted by (*) on p 124  in connection to 
the notion of complete representability and that of neat embeddings,
and again in a slightly different metalogical form, is actually proved  in \cite{Sayed} theorem 3.2.4, in connection to 
the omitting types theorem for certain infinitary extensions of first order
logic; yet again indicating the 
intimate connection. For a Boolean algebra $\A$ and $a\in A$, 
$N_a$ denotes the clopen set in the Stone topology consisting of all ultrafilters
containing $a$.

Assume that $\A\subseteq \Nr_{\alpha}\B$, then because $\A$ is a complete subalgebra, we have $\prod X_i=0$ in $\Nr_{\alpha}\B$, hence also in $\B,$
because the former is the full $\alpha$ neat reduct of $\B$. 
Furthermore, if we take the subalgebra of $\B$ generated by $\A$, then we get
a {\it dimension complemented  algebra}, call it, abusing notation slightly, also $\B$, and in this last algebra the given infinimums are preserved as well.
But by dimension complementedness (which means that the dimension set of any element does not exhaust the dimension), we have
\begin{equation}\label{t1}
\begin{split} (\forall j<\alpha)(\forall x\in A)({\sf c}_jx=\sum_{i\in \alpha\smallsetminus \Delta x}
{\sf s}_i^jx.)
\end{split}
\end{equation}
Here the $s_i^j$ are substitution operations corresponding to replacements; those are definable in cylindric algebras.
Now let $V$ be the weak space $^{\omega}\omega^{(Id)}=\{s\in {}^{\omega}\omega: |\{i\in \omega: s_i\neq i\}|<\omega\}$.
For each $\tau\in V$ for each $i\in \kappa$, let
$$X_{i,\tau}=\{{\sf s}_{\tau}x: x\in X_i\}.$$
Here ${\sf s}_{\tau}$ 
is the unary operation as defined in  \cite[1.11.9]{HMT1}.
For each $\tau\in V,$ ${\sf s}_{\tau}$ is a complete
boolean endomorphism on $\B$ by \cite[1.11.12(iii)]{HMT1}. 
It thus follows that 
\begin{equation}\label{t2}\begin{split}
(\forall\tau\in V)(\forall  i\in \kappa)\prod{}^{\A}X_{i,\tau}=0
\end{split}
\end{equation}
%Let $S$ be the Stone space of the Boolean part of $\A$, and for $x\in \A$, let $N_x$ 
%denote the clopen set consisting of all
%boolean ultrafilters that contain $x$.
Then from \ref{t1}, \ref{t2}, it follows that for $x\in \A,$ $j<\beta$, $i<\kappa$ and 
$\tau\in V$, the sets 
$$\bold G_{j,x}=N_{{\sf c}_jx}\setminus \bigcup_{i\notin \Delta x} N_{{\sf s}_i^jx}
\text { and } \bold H_{i,\tau}=\bigcap_{x\in X_i} N_{{\sf s}_{\bar{\tau}}x}$$
are closed nowhere dense sets in $S$.
Also each $\bold H_{i,\tau}$ is closed and nowhere 
dense. As before, 
let $$\bold G=\bigcup_{j\in \beta}\bigcup_{x\in B}\bold G_{j,x}
\text { and }\bold H=\bigcup_{i\in \kappa}\bigcup_{\tau\in V}\bold H_{i,\tau.}$$
then $\bold H$ is a countable collection of nowhere dense sets,
and so by the Baire Category theorem,  we get that $H(A)=S\sim \bold H\cup \bold G$ is dense in $S$.
Accordingly let $F$ be an ultrafilter in $N_a\cap X$,
and then define $f$ to the full weak set algebra, call it $\B_a$, with unit $V$ as follows:
$$f(x)=\{ \tau\in {}^{\omega}\omega:  {\sf s}_{\tau}x\in F\}, \text { for } x\in \A.$$ 
Then it can be checked that $f$
is a homomorphism  such that $f(a)\neq 0$, and is as required.
The as before form the product $\prod_{a\in A}\B_a\cong \wp(\bigcup V_a)$, where $V_a$ is the unit of $\B_a$,
$\bigcup$ is their disjoint union,
and finally define the injection $f$ via 
$$x\mapsto (f_a(x): a\neq 0).$$
%Now in $\B$ we have $c_kx=\sum s_i^jx$, so that we prove a representation theorem mention in \cite{Sayed}.

Let $\L$ be an extension of first order  logic, and $\M$ a model for $\L$, and $T$ an $\L$ theory. Then $\M$ is weak mode for $T$, if
formulas in $\phi$ are satisfied in the Tarskian sense, by assignments in $\M$ that are eventually constant. 

\begin{theorem} The omitting types theorem holds for rich languages if semantics is relativized to weak models, 
furthermore if we have only finitely many relation symbols and 
$T$ is complete, then the number of such non-isomorphic models is exactly $2^{\aleph_0}$.
\end{theorem}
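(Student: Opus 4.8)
\medskip
\noindent{\it Proof idea.} The plan is to pull the metalogical statement back to the algebraic Baire--category construction of the preceding subsection. Given a rich theory $T$ --- so that every formula $\exists x\,\phi$ is $T$-provably equivalent to $\phi(x/y)$ for some variable $y$ not free in $\phi$ --- and a family $(\Gamma_i:i<covK)$ of non-principal types of $T$, form the Lindenbaum--Tarski algebra $\A=\Fm_T$. Richness is precisely the statement that $\A$ is a dimension complemented algebra, so identity \ref{t1} holds in $\A$, and non-principality of $\Gamma_i$ says exactly that the associated set $X_i\subseteq A$ of (negations of) its formulas has $\prod{}^{\A}X_i=0$. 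This is all the construction above requires, with the ``$\B$'' there taken to be $\A$ itself.

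Concretely, I would run that construction verbatim. Fix $a\in A$, $a\neq 0$; form the nowhere dense sets $\mathbf{G}_{j,x}$ in the Stone space $S$ of $\A$ (coding that a representation commutes with cylindrifiers, via \ref{t1}) and $\mathbf{H}_{i,\tau}$ for $i<covK$ and $\tau\in V={}^{\omega}\omega^{(Id)}$ (coding that the assignment $\tau$ omits $\Gamma_i$). Since $V$ is countable, $\mathbf{H}=\bigcup_{i,\tau}\mathbf{H}_{i,\tau}$ is a countable union of nowhere dense sets, so the Baire category theorem gives an ultrafilter $F\in N_a\setminus(\mathbf{G}\cup\mathbf{H})$. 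Then $f(x)=\{\tau\in V:{\sf s}_\tau x\in F\}$ is a homomorphism onto a weak set algebra $\B_a$ with unit $V$, and because $\B_a$ is a weak set algebra, $f$ is literally the diagram of a weak model $\M_a\models T$ whose assignments run over the eventually constant $\tau\in V$; since $F$ meets no $\mathbf{H}_{i,\tau}$, every assignment fails some formula of $\Gamma_i$, so $\M_a$ omits each $\Gamma_i$. Taking the disjoint union of the $\M_a$ over all $a\neq 0$ yields a single weak model of $T$ omitting all the $\Gamma_i$, which is the first assertion.

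For the cardinality claim, assume finitely many relation symbols: then $L$ has only countably many formulas, $|A|\leq\aleph_0$, and every weak model has base of size $\leq|V|=\aleph_0$, so there are at most $2^{\aleph_0}$ such models up to isomorphism, giving the upper bound. For the matching lower bound I would argue in Vaught/Ehrenfeucht style that the type space of $T$ (the Stone space of $\Fm_T$) carries a perfect tree $(p_\sigma:\sigma\in {}^{<\omega}2)$ of pairwise incompatible complete types extending $T$; this is inherited from the blow-up-and-blur structure of $\Fm_T$ for the theories at hand (each atom is split into infinitely many and the blurs are non-principal ultrafilters, so $S$ has continuum many non-isolated points), the seed being the non-principal co-atom type $\Gamma=\{\phi:\phi/T\text{ is a co-atom}\}$, which is non-principal since $\sum\At\A=1$ forces $\prod\{-x:x\in\At\A\}=0$. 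For each branch $\eta\in {}^{\omega}2$, rerun the Baire construction with the added demand that $F$ contain a point of $\bigcap_n N_{p_{\eta\restr n}}$ while still avoiding the $\mathbf{H}$'s attached to all other branches (their mutual incompatibility makes this consistent); this yields a weak model $\M_\eta\models T$ realizing the branch $\eta$ and omitting the $2^{\aleph_0}$ branches $\neq\eta$. Distinct $\eta$ give non-isomorphic $\M_\eta$, since the countable set of branches realized in $\M_\eta$ recovers $\eta$, so we obtain $2^{\aleph_0}$ pairwise non-isomorphic weak models.

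The routine part --- the reduction to the algebraic statement, the verification that $f$ is a representation and that omission holds --- is just the transcription already written out in the preceding subsection and in \cite{Sayed,neat}. The real obstacle is the lower-bound count. One must genuinely secure a perfect set of pairwise contradictory types: for a general complete $T$ this fails (the Ehrenfeucht example has exactly three countable models though complete and with a non-principal type), so the value $2^{\aleph_0}$ must be read under the standing richness of the $\Fm_T$ produced by the constructions of this paper. And one must fit, inside a single Baire--category argument, the positive demand ``realize $p_\eta$'' together with continuum many negative demands; this forces ``realize $p_\eta$'' to be an intersection of countably many dense open conditions and leans essentially on the $p_\sigma$ being mutually exclusive. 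Checking that this bookkeeping survives both the eventually-constant weak semantics and the infinite-dimensional ($\CA_\omega$) setting is where the work lies.
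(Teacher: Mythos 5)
Your first part is essentially the paper's: pass to $\A=\Fm_T\in{\sf Dc}_\alpha\subseteq\Nr_\alpha\CA_{\alpha+\omega}$ and run the Baire category construction of the preceding subsection; that is exactly what the paper does. The gap is in your lower bound for the counting claim, and it is twofold. First, you never establish the perfect tree of pairwise incompatible complete types; you only assert it is ``inherited from the blow-up-and-blur structure,'' whereas the theorem is stated for an arbitrary complete rich theory in finitely many relation symbols, and you yourself concede that such a tree can fail for a general complete theory. The paper gets continuum many candidates much more cheaply: since $T$ is complete and has finitely many relation symbols, $\A=\Fm_T$ is simple, countable and finitely generated, hence not atomic, so its Stone space is a perfect Polish space with exactly $2^{\aleph_0}$ ultrafilters; the set $\mathbb{H}$ of ``good'' ultrafilters (those respecting the cylindrifier and diagonal conditions and omitting the $\Gamma_i$) is a $G_\delta$ co-meager subset --- the offending union over $\lambda<cov K$ indices reduces to a countable union of nowhere dense sets by the defining property of $cov K$ --- so $|\mathbb{H}|=2^{\aleph_0}$. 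No tree of types is needed.

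Second, even granting the tree, your plan to build $\M_\eta$ realizing branch $\eta$ while ``omitting the $2^{\aleph_0}$ branches $\neq\eta$'' asks a single Baire category argument to avoid continuum many nowhere dense sets, which the Baire category theorem does not license and which $cov K$ cannot repair at the continuum itself. The paper sidesteps distinguishing models by realized types altogether: each good ultrafilter $F$ yields the weak set algebra $h_F(\A)$ on the fixed countable unit $V={}^{\omega}\alpha^{(Id)}$, where $h_F(a)=\{\tau\in V:{\sf s}_\tau a\in F\}$ is injective by simplicity of $\A$, and two such models are base-isomorphic iff $G={\sf s}_\sigma F$ for some finite bijection $\sigma\in V$. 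Since $V$ is countable these orbits are countable, so $|\mathbb{H}/\sim|=2^{\aleph_0}$ and the count follows. You should replace the tree-and-branches argument by this orbit count, or else supply, for the specific theories you have in mind, both the perfect tree and a legitimate mechanism for handling continuum many omission requirements (for instance maximality of the types plus Shelah's theorem, as invoked later in the paper).
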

\begin{proof} For the first part, if $If \L$ 
is a countable rich theory and $\Gamma_i$ are non principal types, then $\A=\Fm_T\in {\sf Dc}_{\alpha}$, 
and $X_i=\{\phi_T: \phi\in \Gamma_i\}$ satisfies that $\prod X_i=0.$
But ${\sf \Dc}_{\alpha}\subseteq \Nr_{\alpha}\CA_{\alpha+\omega}$, and we are done.

For the second part, we have $\A$ as above is simple and finitely generated. Let $V={}^{\omega}\alpha^{(Id)}$. Let $\lambda< covK$, 
let $\mathcal{H}(\A)=\bigcap_{i<\omega,x\in A}(N_{-{\sf c}_ix}\cup\bigcup_{j<\omega}N_{{\sf s}^i_jx}),$ 
$\mathcal{H}'(\A)=\mathcal{H}(\A)\cap\bigcap_{i\neq j\in\omega}N_{-{\sf d}_{ij}},$
and $\mathbb{H}= \mathcal{H}'(\A)\cap\sim  \bigcup_{i\in\lambda,\tau\in V}\bigcap_{a\in \Gamma_i}N_{{\sf s}_\tau a)}.$
Here ${\sf s}_{\tau}$ is a well defined boolean endomorphism
of $\A$. (The last union is the models {\it not} omiting the types, it will turn out to be a  set of first category).
This is exactly the space of ultrafilters corresponding to models of $T$ omitting the $\Gamma_i$'s.
Now $\A$ is simple, countable, and is finitely generated, hence it is not atomic, so it has exactly continuum many ultrafilters, and 
$\mathbb{H}=2^{\aleph_0}$, being a $G_{\delta}$ subset of the stone space, which is a Polish space.
This follows from the proprty of $cov K$ that a $\lambda$ union with
$\lambda <covK$ can be reduced to a countable union.
To find the number of non-isomorphic, it remains to  factor out this set of the power of the continuum,
by the relation $F\sim G$ if there exists a finite bijection $\tau\in V$ such that ${\sf s}_{\tau}F=G$
For an ultrafilter $F$, let $h_F(a)=\{\tau \in V: s_{\tau}a\in F\}$, $a\in \A$.
Then $h_F\neq 0$, indeed $Id\in h_F(a)$ for any $a\in F$, hence $h_F$ is an injection, by simplicity of $\A$.
Now $h_F:\A\to \wp(V)$; all the $h_F$'s have the same target algebra.
Then it is easy to check that $h_F(\A)$ is base isomorphic to $h_G(\A)$ iff there exists a finite bijection $\sigma\in V$ such that
$s_{\sigma}F=G$. 
But the orbits of this relation is countable, and and so the number of non-isomorphi models is $|\mathbb{H}/\sim|=2^{\aleph_0}.$
\end{proof}

\subsubsection{A threatening example}

Now why are we giving this proof? If $\A$ is representable, that is $\A\subseteq \Nr_{\alpha}\B$, then $\A^+$ is a {\it complete} subalgebra of 
$\Nr_{\alpha}\B^+$.

However, $\A^+$ is {\it uncountable}, so we cannot apply the last Baire Category argument taking the meet to be preserved 
to be the co atoms of $\A^+$, that is the set $\{-x: x\in \At\A^+\}$, but we thought that the analogy could be inspiring. 
On the other hand, there are {\it omitting types theorems} for {\it uncountable} 
languages when the types are maximal, which is also not the case with the type consisting only 
of co-atoms.

We have managed to construct an uncountable atomic algebra in $\Nr_{\alpha}\CA_{\alpha+\omega}$ 
that is {\it not} completely representable even on {\it weak} units. This could be threatning.
But the big algebra it neatly embeds into is {\it atomless}; which does not apply here.

We give the proof of this, which is not trivial. The proof consists of two parts. 
The first part constructs finite dimensional atomic uncountable algebras
in $\Nr_n\CA_{\omega}$, $n\geq 3$, with no complete representations, then using a lifting argument essentially due
to Monk, implemented via utraproducts,  we lift the result to infinite dimensions. 
The first part is due to Robin Hirsch, the idea is only sketched in a remark \cite{r},  
and remarkably, the example also answers a  question of his in the same paper. 
Therefore we have decided to give a detailed proof, with a slight genearlization; we use any infinite
cardinal $\kappa$ in place of $\omega$.
But first we need.
\begin{theorem}(Erdos-Rado)
If $r\geq 2$ is finite, $k$  an infinite cardinal, then
$$exp_r(k)^+\to (k^+)_k^{r+1}$$

where $exp_0(k)=k$ and inductively $exp_{r+1}(k)=2^{exp_r(k)}$.
\end{theorem}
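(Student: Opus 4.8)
\begin{demo}{Proof idea}
The plan is to argue by induction on $r$, the crux being the extraction of a large \emph{end-homogeneous} set. Say that a subset $Y$ of a linearly ordered set is end-homogeneous for a colouring $d\colon[Y]^{r+2}\to k$ if $d(t\cup\{x\})=d(t\cup\{y\})$ whenever $t\in[Y]^{r+1}$ and $x,y\in Y$ both exceed $\max t$. The base of the recursion is the case of one-element subsets, namely $k^+\to(k^+)^1_k$, which is immediate from the regularity of $k^+$; if one prefers to start at $r\geq 1$, the classical bound $(2^k)^+\to(k^+)^2_k$ serves just as well and is the present argument run with $r=0$.

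For the inductive step, assume $exp_r(k)^+\to(k^+)^{r+1}_k$ and aim for $exp_{r+1}(k)^+\to(k^+)^{r+2}_k$. Write $\mu=exp_r(k)$, so $exp_{r+1}(k)=2^{\mu}$; let $X$ have size $(2^{\mu})^+$, fix a well ordering of $X$ of that order type, and let $c\colon[X]^{r+2}\to k$. First I would build, by transfinite recursion of length $\mu^+$, an increasing sequence $\langle a_\alpha:\alpha<\mu^+\rangle$ of elements of $X$ together with a decreasing chain $X=A_0\supseteq A_1\supseteq\cdots$ of sets of size $(2^{\mu})^+$, each $A_\alpha$ lying above every earlier $a_\beta$: at stage $\alpha$ set $a_\alpha=\min A_\alpha$, observe that the derived colourings $z\mapsto c(t\cup\{z\})$, as $t$ ranges over the at most $\mu$ many $(r+1)$-element subsets of $\{a_\beta:\beta\leq\alpha\}$, together assign to each $z\in A_\alpha$ with $z>a_\alpha$ one of at most $k^{\mu}=2^{\mu}$ possible value-vectors, and since $(2^{\mu})^+$ is regular and exceeds $2^{\mu}$, one such vector is realised on a subset $A_{\alpha+1}$ of size $(2^{\mu})^+$; at limit stages take intersections. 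Then $Y=\{a_\alpha:\alpha<\mu^+\}$ is end-homogeneous for $c$ and has size $\mu^+=exp_r(k)^+$.

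Given $Y$, the colouring $c$ collapses to $c'\colon[Y]^{r+1}\to k$ defined by $c'(s)=c(s\cup\{z\})$ for any $z\in Y$ with $z>\max s$; this is well defined precisely because $Y$ is end-homogeneous. Applying the induction hypothesis $exp_r(k)^+\to(k^+)^{r+1}_k$ to $c'$ on $Y$ produces $H\subseteq Y$ with $|H|=k^+$ monochromatic for $c'$. For any $(r+2)$-element $u\subseteq H$, write $u=s\cup\{\max u\}$ with $s\in[H]^{r+1}$ and $\max s<\max u$; then $c(u)=c'(s)$, the constant value of $c'$ on $[H]^{r+1}$. Hence $H$ is monochromatic for $c$, and the induction goes through.

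The main obstacle is the cardinal bookkeeping at the limit stages of the recursion in the second paragraph: the intersection $A_\delta=\bigcap_{\alpha<\delta}A_\alpha$ can fail to have size $(2^{\mu})^+$ if the successor steps are chosen naively, since a single step may in principle discard a set of the full size $(2^{\mu})^+$. Making this work requires choosing the shrinkings coherently --- organising the possible accumulated value-vectors into a tree and descending along a branch that stays ``popular'', or, in more modern language, running the recursion inside an elementary submodel of size $2^{\mu}$ closed under $\mu$-sequences --- so that at every limit $\delta$ the value-vector realised by $A_\delta$ is still realised $(2^{\mu})^+$ times. It is exactly here that the identities $k^{\mu}=2^{\mu}=exp_{r+1}(k)$ and the regularity of $exp_{r+1}(k)^+$ are used with no slack; the remaining verifications are routine.
\end{demo}
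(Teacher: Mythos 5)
The paper does not actually prove this statement: Erd\H{o}s--Rado is quoted as a classical black box (it is only \emph{used}, in the construction of the uncountable non-completely-representable algebra), so there is no in-paper argument to compare yours against. Your overall route --- induction on $r$, extraction of an end-homogeneous set of size $exp_r(k)^+$, collapse of $c$ to an $(r+1)$-ary colouring $c'$, and application of the induction hypothesis --- is the standard and correct skeleton, and the final paragraph of your sketch (well-definedness of $c'$ and why a $c'$-homogeneous set is $c$-homogeneous) is fine.

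The one substantive step, however, is exactly the one you leave open, and as literally written it fails. A decreasing chain $A_0\supseteq A_1\supseteq\cdots$ of sets of size $(2^{\mu})^+$ can have empty intersection already at a limit of cofinality $\omega$ (partition $(2^{\mu})^+$ into $\omega$ pieces $P_m$ each of size $(2^{\mu})^+$ and let $A_m=\bigcup_{l\geq m}P_l$), and nothing about the colour classes you select prevents this. The standard repair is not to shrink a large set but to grow a small one: build an increasing \emph{continuous} chain $\langle B_\alpha:\alpha<\mu^+\rangle$ of subsets of $X$, each of size $2^{\mu}$, such that $B_{\alpha+1}$ realises over every $S\in[B_\alpha]^{\leq\mu}$ every $(r+1)$-type realised in $X$ (a counting check shows $2^{\mu}\cdot k^{\mu}=2^{\mu}$ types suffice to track). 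Since $|\bigcup_\alpha B_\alpha|=2^{\mu}<|X|$ one may fix a single external guide element $a\in X\setminus\bigcup_\alpha B_\alpha$ and then choose $a_\xi\in\bigcup_\alpha B_\alpha$ recursively so that $c(t\cup\{a_\xi\})=c(t\cup\{a\})$ for all $t\in[\{a_\eta:\eta<\xi\}]^{r+1}$; continuity of the chain and regularity of $\mu^+$ make every recursion stage land inside some $B_\alpha$, and the resulting $\{a_\xi:\xi<\mu^+\}$ is end-homogeneous. Your suggested elementary-submodel fix (an $M$ of size $2^{\mu}$ with ${}^{\mu}M\subseteq M$) is exactly this argument in different clothing, so the gap is closable by a known device you correctly name --- but the construction you actually wrote down is not the one that works, and the limit-stage issue is not mere ``bookkeeping''; it forces the chain to be increasing rather than decreasing.
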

The above partition symbol describes the following statement. If $f$ is a coloring of the $r+1$
element subsets of a set of cardinality $exp_r(k)^+$
in $k$ many colors, then there is a homogeneous set of cardinality $k^+$+
(a set, all whose $r+1$ element subsets get the same $f$-value).

The proof constructs cylindric algebras from relation algebras having an $\omega$ dimensional cylindric basis.
\begin{theorem} Let $\alpha$ be an infinite ordinal. Then there exists $\B\in \Nr_{\alpha}\CA_{\alpha+\omega}$ 
that is atomic, uncountable and not completely
representable.
\end{theorem}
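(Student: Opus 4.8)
The plan is to follow the two-stage route sketched just above: handle all finite dimensions first, then lift to the given infinite $\alpha$.

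\textbf{Stage 1: finite dimensions.} I would produce, for every finite $n\geq 3$, an uncountable atomic $\mathcal{C}_n\in\Nr_n\CA_\omega$ with no complete representation, starting not from a cylindric algebra but from a relation algebra. Fix an infinite cardinal $\kappa$ and build a symmetric integral relation atom structure $S=S(\kappa)$ whose non-identity atoms carry a $\kappa$-colouring, with the composition table declaring a triple of atoms consistent unless it would realise a \emph{monochromatic} configuration of a kind that the Erdos-Rado theorem will later forbid. Two things then need checking: (i) that $S$ has an $\omega$-dimensional cylindric basis $B$ --- a verification of consistency and amalgamation of basic matrices carried out uniformly in $n$, in the spirit of the $(J4)_n$ and $(J5)_n$ conditions of \cite{ANT}, so that the complex algebras $\mathcal{C}_n:=\Cm B_n$ of the $n$-dimensional basic matrices cohere, $\mathcal{C}_m\cong\Nr_m\mathcal{C}_n$ for $m<n$, and in fact $\mathcal{C}_n\cong\Nr_n\Cm B_\omega\in\Nr_n\CA_\omega$; and (ii) that $\mathcal{C}_n$ is atomic, with atoms the $n$-dimensional basic matrices, and of cardinality at least $|S|$, hence uncountable once $\kappa$ is. To rule out complete representations I would argue as Hirsch does in \cite{r}: a complete representation of $\mathcal{C}_n$ restricts, through its relation-algebra reduct, to a complete representation of the relation algebra $\Cm S$, i.e.\ to a labelling of the edges of a complete graph by atoms of $S$ obeying the composition table, and one can insist the vertex set have size at least $\exp_r(\kappa)^+$ since each atom must be realised at each node and this can be iterated $r$ times. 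Colouring each $(r+1)$-element set of vertices by the isomorphism type of the labelled configuration it carries, the partition relation $\exp_r(\kappa)^+\to(\kappa^+)^{r+1}_\kappa$ yields a homogeneous set of size $\kappa^+$ all of whose edges bear a single colour --- exactly the monochromatic clique the composition table forbids. This contradiction shows $\mathcal{C}_n$ has no complete representation; that it is representable at all is automatic, since $\Nr_n\CA_\omega\subseteq S\Nr_n\CA_\omega=\RCA_n$.

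\textbf{Stage 2: passing to dimension $\alpha$.} Because the cylindric-basis conditions only constrain finitely many rows and columns at a time, $S$ should carry a $\lambda$-dimensional cylindric basis for every infinite $\lambda$; taking $\lambda=\alpha$ and $\lambda=\alpha+\omega$ gives the candidate $\mathcal{B}:=\Cm B_\alpha$, which then satisfies $\mathcal{B}\cong\Nr_\alpha\Cm B_{\alpha+\omega}\in\Nr_\alpha\CA_{\alpha+\omega}$, is atomic (a complex algebra, with atoms the $\alpha$-dimensional basic matrices), and has cardinality at least $|S|$. A complete representation of $\mathcal{B}$ would once more restrict to a complete representation of $\Cm S$ through the relation-algebra reduct (defined since $\alpha\geq 3$), so the Stage 1 Erdos-Rado argument applies verbatim and kills it. The route announced in \cite{r} instead packages the same content through an ultraproduct: take $(\alpha+\omega)$-dimensional dilations $\mathcal{C}_n^{+}$ of the finite-dimensional $\mathcal{C}_n$, form $\prod_n\mathcal{C}_n^{+}/U$ for a non-principal ultrafilter $U$ on $\omega$, and extract $\mathcal{B}$ as a neat reduct inside it; that route relies only on Stage 1, whereas the direct route makes atomicity and the failure of complete representability transparent.

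\textbf{The main obstacle.} The genuinely delicate point, in either packaging, is that the properties \emph{atomic} and \emph{not completely representable} are not first-order --- indeed ${\sf CRCA}_n$ is non-elementary --- so neither transfers to infinite dimensions for free. I would sidestep this by keeping the whole argument combinatorial: atomicity is secured by building $\mathcal{B}$ as the complex algebra of an atom structure, and the absence of a complete representation is re-established for $\mathcal{B}$ itself via Erdos-Rado applied to its relation-algebra reduct, the ultraproduct (if used at all) serving only to bundle the finite-dimensional data into dimension $\alpha$. The part that will actually take work is the bookkeeping inside Stage 1: arranging $S$ and its forbidden patterns so that any complete representation really is forced onto $\exp_r(\kappa)^+$ vertices and the Erdos-Rado homogeneous set is genuinely illegal, and checking the $\omega$-dimensional cylindric basis carefully enough to land $\mathcal{C}_n$ in $\Nr_n\CA_\omega$ as a full neat reduct, not merely a subalgebra of one.
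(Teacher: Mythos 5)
Your Stage 1 is, in substance, the proof the paper gives: the same kind of relation algebra with $\kappa$ colours and all monochromatic triangles forbidden, the same Erdos--Rado argument against complete representability, and the same device (an amalgamation class of $\omega$-dimensional networks, alias an $\omega$-dimensional cylindric basis) to land the finite-dimensional algebra in $\Nr_n\CA_\omega$ as a full neat reduct. The gap is in your preferred Stage 2. The claim that ``the cylindric-basis conditions only constrain finitely many rows and columns at a time, so $S$ should carry a $\lambda$-dimensional cylindric basis for every infinite $\lambda$'' is false, and it fails for exactly the reason Stage 1 succeeds: an $\alpha$-dimensional basic matrix is a globally consistent atom-labelling of the complete graph on $\alpha$ nodes, and after collapsing all the $a_0^i$ to one colour this is an edge-colouring of $K_\alpha$ by essentially $\kappa$ colours with no monochromatic triangle. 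By the very partition relation $(2^{\kappa})^+\to(\kappa^+)^2_\kappa$ you invoke to kill complete representations, no such colouring exists once $\alpha\geq (2^{\kappa})^+$, so for large $\alpha$ your $B_\alpha$ is empty and $\Cm B_\alpha$ is trivial. Re-choosing $\kappa$ huge relative to $\alpha$ restores nonemptiness but not the proof: the bookkeeping that makes the $\omega$-dimensional amalgamation work in the paper (each network uses only finitely many colours from a designated infinite supply, so a fresh colour is always available when a node is added) has no stated analogue for networks on $|\alpha|$ many nodes, and it is exactly this extension property, not mere nonemptiness of $B_\alpha$, that is needed for atomicity of $\Cm B_\alpha$ and for $\Cm B_\alpha\cong\Nr_\alpha\Cm B_{\alpha+\omega}$. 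The finitary $(J4)_n$/$(J5)_n$-style conditions do not settle this.

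This is precisely why the paper does not take the direct route but lifts by a Monk-style ultraproduct, and your description of that fallback is also not the paper's argument. The index set is not $\omega$: one takes the directed set $I$ of finite subsets $\Gamma$ of $\alpha$, an ultrafilter $F$ containing every $M_\Gamma=\{\Delta\in I:\Gamma\subseteq\Delta\}$, and for each $\Gamma$ an algebra $\C_\Gamma$ similar to a $\CA_\alpha$ whose reduct along a bijection $\rho_\Gamma\colon|\Gamma|\to\Gamma$ is the finite-dimensional example $\C(|\Gamma|)$; the desired $\B$ is $\prod_{\Gamma/F}\C_\Gamma$, placed in $\Nr_\alpha\CA_{\alpha+\omega}$ because neat reducts commute with these ultraproducts. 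Your stated reason for distrusting this route is also off: atomicity \emph{is} first order (every nonzero element dominates an atom), so it passes through the ultraproduct for free, and non-complete-representability is not transferred elementarily at all but read off from $\Nr_n\B\cong\C(n)$, since a complete representation of $\B$, even on a weak unit, induces one of each finite-dimensional neat reduct. So the finite-dimensional half of your proposal matches the paper, but the infinite-dimensional half, as you propose to carry it out, does not go through.
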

\begin{proof}
\begin{enumarab}

\item We define an atomic relation algebra $\A$ with uncountably many
atoms. Let $\kappa$ be an infinite cardinal. This algebra will be used to construct cylindric algebras of dimension
$n$ showing that countability is essential in the above characterization.

The atoms are $1', \; a_0^i:i<2^{\kappa}$ and $a_j:1\leq j<
\kappa$, all symmetric.  The forbidden triples of atoms are all
permutations of $(1',x, y)$ for $x \neq y$, \/$(a_j, a_j, a_j)$ for
$1\leq j<\kappa$ and $(a_0^i, a_0^{i'}, a_0^{i^*})$ for $i, i',
i^*<2^{\kappa}.$  In other words, we forbid all the monochromatic
triangles.

Write $a_0$ for $\set{a_0^i:i<2^{\kappa}}$ and $a_+$ for
$\set{a_j:1\leq j<\kappa}$. Call this atom
structure $\alpha$.

Let $\A$ be the term algebra on this atom
structure; the subalgebra of $\Cm\alpha$ generated by the atoms.  $\A$ is a dense subalgebra of the complex algebra
$\Cm\alpha$. We claim that $\A$, as a relation algebra,  has no complete representation.

Indeed, suppose $\A$ has a complete representation $M$.  Let $x, y$ be points in the
representation with $M \models a_1(x, y)$.  For each $i<\omega_1$ there is a
point $z_i \in M$ such that $M \models a_0^i(x, z_i) \wedge a_1(z_i, y)$.

Let $Z = \set{z_i:i<2^{\kappa}}$.  Within $Z$ there can be no edges labeled by
$a_0$ so each edge is labelled by one of the $\kappa$ atoms in
$a_+$.  The Erdos-Rado theorem forces the existence of three points
$z^1, z^2, z^3 \in Z$ such that $M \models a_j(a^1, z^2) \wedge a_j(z^2, z^3)
\wedge a_j(z^3, z_1)$, for some single $j<\kappa$.  This contradicts the
definition of composition in $\A$.

Let $S$ be the set of all atomic $\A$-networks $N$ with nodes
 $\omega$ such that\\ $\set{a_i: 1\leq i<\omega,\; a_i \mbox{ is the label
of an edge in }
 N}$ is finite.
Then it is straightforward to show $S$ is an amalgamation class, that is for all $M, N
\in S$ if $M \equiv_{ij} N$ then there is $L \in S$ with
$M \equiv_i L \equiv_j N.$
Hence the complex cylindric algebra $\Ca(S)\in \CA_\omega$.

Now let $X$ be the set of finite $\A$-networks $N$ with nodes
$\subseteq\omega$ such that
\begin{enumerate}
\item each edge of $N$ is either (a) an atom of
$\c A$ or (b) a cofinite subset of $a_+=\set{a_j:1\leq j<\kappa}$ or (c)
a cofinite subset of $a_0=\set{a_0^i:i<2^{\kappa}}$ and
\item $N$ is `triangle-closed', i.e. for all $l, m, n \in nodes(N)$ we
have $N(l, n) \leq N(l,m);N(m,n)$.  That means if an edge $(l,m)$ is
labeled by $1'$ then $N(l,n)= N(m,n)$ and if $N(l,m), N(m,n) \leq
a_0$ then $N(l,n).a_0 = 0$ and if $N(l,m)=N(m,n) =
a_j$ (some $1\leq j<\omega$) then $N(l,n).a_j = 0$.
\end{enumerate}
For $N\in X$ let $N'\in\Ca(S)$ be defined by
\[\set{L\in S: L(m,n)\leq
N(m,n) \mbox{ for } m,n\in nodes(N)}\]
For $i,\omega$, let $N\restr{-i}$ be the subgraph of $N$ obtained by deleting the node $i$.
Then if $N\in X, \; i<\omega$ then $\cyl i N' =
(N\restr{-i})'$.
The inclusion $\cyl i N' \subseteq (N\restr{-i})'$ is clear.

Conversely, let $L \in (N\restr{-i})'$.  We seek $M \equiv_i L$ with
$M\in N'$.  This will prove that $L \in \cyl i N'$, as required.
Since $L\in S$ the set $X = \set{a_i \notin L}$ is infinite.  Let $X$
be the disjoint union of two infinite sets $Y \cup Y'$, say.  To
define the $\omega$-network $M$ we must define the labels of all edges
involving the node $i$ (other labels are given by $M\equiv_i L$).  We
define these labels by enumerating the edges and labeling them one at
a time.  So let $j \neq i < \omega$.  Suppose $j\in nodes(N)$.  We
must choose $M(i,j) \leq N(i,j)$.  If $N(i,j)$ is an atom then of
course $M(i,j)=N(i,j)$.  Since $N$ is finite, this defines only
finitely many labels of $M$.  If $N(i,j)$ is a cofinite subset of
$a_0$ then we let $M(i,j)$ be an arbitrary atom in $N(i,j)$.  And if
$N(i,j)$ is a cofinite subset of $a_+$ then let $M(i,j)$ be an element
of $N(i,j)\cap Y$ which has not been used as the label of any edge of
$M$ which has already been chosen (possible, since at each stage only
finitely many have been chosen so far).  If $j\notin nodes(N)$ then we
can let $M(i,j)= a_k \in Y$ some $1\leq k < \omega$ such that no edge of $M$
has already been labeled by $a_k$.  It is not hard to check that each
triangle of $M$ is consistent (we have avoided all monochromatic
triangles) and clearly $M\in N'$ and $M\equiv_i L$.  The labeling avoided all
but finitely many elements of $Y'$, so $M\in S$. So
$(N\restr{-i})' \subseteq \cyl i N'$.

Now let $X$ be the set of finite $\A$-networks $N$ with nodes
$\subseteq\omega$ such that
\begin{enumerate}
\item each edge of $N$ is either (a) an atom of
$\c A$ or (b) a cofinite subset of $a_+=\set{a_j:1\leq j<\kappa}$ or (c)
a cofinite subset of $a_0=\set{a_0^i:i<2^{\kappa}}$ and
\item $N$ is `triangle-closed', i.e. for all $l, m, n \in nodes(N)$ we
have $N(l, n) \leq N(l,m);N(m,n)$.  That means if an edge $(l,m)$ is
labeled by $1'$ then $N(l,n)= N(m,n)$ and if $N(l,m), N(m,n) \leq
a_0$ then $N(l,n).a_0 = 0$ and if $N(l,m)=N(m,n) =
a_j$ (some $1\leq j<\omega$) then $N(l,n).a_j = 0$.
\end{enumerate}
For $N\in X$ let $N'\in\Ca(S)$ be defined by
\[\set{L\in S: L(m,n)\leq
N(m,n) \mbox{ for } m,n\in nodes(N)}\]
For $i,\omega$, let $N\restr{-i}$ be the subgraph of $N$ obtained by deleting the node $i$.
Then if $N\in X, \; i<\omega$ then $\cyl i N' =
(N\restr{-i})'$.
The inclusion $\cyl i N' \subseteq (N\restr{-i})'$ is clear.

Conversely, let $L \in (N\restr{-i})'$.  We seek $M \equiv_i L$ with
$M\in N'$.  This will prove that $L \in \cyl i N'$, as required.
Since $L\in S$ the set $X = \set{a_i \notin L}$ is infinite.  Let $X$
be the disjoint union of two infinite sets $Y \cup Y'$, say.  To
define the $\omega$-network $M$ we must define the labels of all edges
involving the node $i$ (other labels are given by $M\equiv_i L$).  We
define these labels by enumerating the edges and labeling them one at
a time.  So let $j \neq i < \omega$.  Suppose $j\in nodes(N)$.  We
must choose $M(i,j) \leq N(i,j)$.  If $N(i,j)$ is an atom then of
course $M(i,j)=N(i,j)$.  Since $N$ is finite, this defines only
finitely many labels of $M$.  If $N(i,j)$ is a cofinite subset of
$a_0$ then we let $M(i,j)$ be an arbitrary atom in $N(i,j)$.  And if
$N(i,j)$ is a cofinite subset of $a_+$ then let $M(i,j)$ be an element
of $N(i,j)\cap Y$ which has not been used as the label of any edge of
$M$ which has already been chosen (possible, since at each stage only
finitely many have been chosen so far).  If $j\notin nodes(N)$ then we
can let $M(i,j)= a_k \in Y$ some $1\leq k < \omega$ such that no edge of $M$
has already been labeled by $a_k$.  It is not hard to check that each
triangle of $M$ is consistent (we have avoided all monochromatic
triangles) and clearly $M\in N'$ and $M\equiv_i L$.  The labeling avoided all
but finitely many elements of $Y'$, so $M\in S$. So
$(N\restr{-i})' \subseteq \cyl i N'$.

Now let $X' = \set{N':N\in X} \subseteq \Ca(S)$.
Then the subalgebra of $\Ca(S)$ generated by $X'$ is obtained from
$X'$ by closing under finite unions.
Clearly all these finite unions are generated by $X'$.  We must show
that the set of finite unions of $X'$ is closed under all cylindric
operations.  Closure under unions is given.  For $N'\in X$ we have
$-N' = \bigcup_{m,n\in nodes(N)}N_{mn}'$ where $N_{mn}$ is a network
with nodes $\set{m,n}$ and labeling $N_{mn}(m,n) = -N(m,n)$. $N_{mn}$
may not belong to $X$ but it is equivalent to a union of at most finitely many
members of $X$.  The diagonal $\diag ij \in\Ca(S)$ is equal to $N'$
where $N$ is a network with nodes $\set{i,j}$ and labeling
$N(i,j)=1'$.  Closure under cylindrification is given.
Let $\c C$ be the subalgebra of $\Ca(S)$ generated by $X'$.
Then $\A = \Ra(\c C)$.
Each element of $\A$ is a union of a finite number of atoms and
possibly a co-finite subset of $a_0$ and possibly a co-finite subset
of $a_+$.  Clearly $\A\subseteq\Ra(\c C)$.  Conversely, each element
$z \in \Ra(\c C)$ is a finite union $\bigcup_{N\in F}N'$, for some
finite subset $F$ of $X$, satisfying $\cyl i z = z$, for $i > 1$. Let $i_0,
\ldots, i_k$ be an enumeration of all the nodes, other than $0$ and
$1$, that occur as nodes of networks in $F$.  Then, $\cyl
{i_0} \ldots
\cyl {i_k}z = \bigcup_{N\in F} \cyl {i_0} \ldots
\cyl {i_k}N' = \bigcup_{N\in F} (N\restr{\set{0,1}})' \in \A$.  So $\Ra(\c C)
\subseteq \A$.
$\A$ is relation algebra reduct of $\c C\in\CA_\omega$ but has no
complete representation.
Let $n>2$. Let $\B=\Nr_n \c C$. Then
$\B\in \Nr_n\CA_{\omega}$, is atomic, but has no complete representation.

\item For $k\geq 3$, let $\c C(k)\in \Nr_k\CA_{\omega}$ be atomic uncountable and not completely representable.
Such algebras exist \cite{Sayed}.
Let $I=\{\Gamma: \Gamma\subseteq \alpha,  |\Gamma|<\omega\}$. 
For each $\Gamma\in I$, let $M_{\Gamma}=\{\Delta\in I: \Gamma\subseteq \Delta\}$, 
and let $F$ be an ultrafilter on $I$ such that $\forall\Gamma\in I,\; M_{\Gamma}\in F$. 
For each $\Gamma\in I$, let $\rho_{\Gamma}$ 
be a one to one function from $|\Gamma|$ onto $\Gamma.$
%(Note that if $\alpha=\omega$, then the $\rho_{\Gamma}$'s can be the identity maps). 
Let ${\c C}_{\Gamma}$ be an algebra similar to $\CA_{\alpha}$ such that 
$\Rd^{\rho_\Gamma}{\c C}_{\Gamma}={\c C}(|\Gamma|)$. In particular, ${c C}_{\Gamma}$ has an atomic Boolean reduct.
Let  
$\B=\prod_{\Gamma/F\in I}\c C_{\Gamma}$
We will prove that 
$\B\in \Nr_\alpha\CA_{\alpha+\omega},$  $\B$ is atomic and $\B$ is not completely representable. The last two requirements are
easy. $\B$ is atomic, because it is an ultraproduct of atomic algebras. 
$\B$ is not completely representable, even on weak units, because, $\Nr_n\B={\c C}(n)$, and so  such a representaion induces
a complete (square) representation of its $n$ neat reducts, $n\geq 3.$
For the first part, for each $\Gamma\in I$ we know that $\c C(|\Gamma|+k) \in\CA_{|\Gamma|+k}$ and 
$\Nr_{|\Gamma|}\c C(|\Gamma|+k)\cong\c C(|\Gamma|)$.
Let $\sigma_{\Gamma}$ be a one to one function 
 $(|\Gamma|+\omega)\rightarrow(\alpha+\omega)$ such that $\rho_{\Gamma}\subseteq \sigma_{\Gamma}$
and $\sigma_{\Gamma}(|\Gamma|+i)=\alpha+i$ for every $i<\omega$. Let $\A_{\Gamma}$ be an algebra similar to a 
$\CA_{\alpha+\omega}$ such that 
$\Rd^{\sigma_\Gamma}\A_{\Gamma}=\c C(|\Gamma|+k)$.  Then 
$\Pi_{\Gamma/F}\A_{\Gamma}\in \CA_{\alpha+\omega}$.
We prove that $\B= \Nr_\alpha\Pi_{\Gamma/F}\c A_\Gamma$.  Recall that $\B^r=\Pi_{\Gamma/F}\c C^r_\Gamma$ and note 
that $C^r_{\Gamma}\subseteq A_{\Gamma}$ 
(the base of $C^r_\Gamma$ is $\c C(|\Gamma|)$, the base of $A_\Gamma$ is $\c C(|\Gamma|+k)$).
Now here again we use the fact that neat reducts commute with forming ultraproducts, so, for each $\Gamma\in I$,
\begin{align*}
\Rd^{\rho_{\Gamma}}\C_{\Gamma}^r&=\c C((|\Gamma|)\\
&\cong\Nr_{|\Gamma|}\c C(|\Gamma|+k)\\
&=\Nr_{|\Gamma|}\Rd^{\sigma_{\Gamma}}\A_{\Gamma}\\
&=\Rd^{\sigma_\Gamma}\Nr_\Gamma\A_\Gamma\\
&=\Rd^{\rho_\Gamma}\Nr_\Gamma\A_\Gamma
\end{align*}
By the first part of the first part we deduce that 
$\Pi_{\Gamma/F}\C^r_\Gamma\cong\Pi_{\Gamma/F}\Nr_\Gamma\A_\Gamma=\Nr_\alpha\Pi_{\Gamma/F}\A_\Gamma$.
\end{enumarab}
\end{proof}

\subsubsection{Back again to the original problem, coonections with complete representability of polyadic algebras}

One more attempt is to use that principal ultrafilters lie outside nowhere dense sets.
So suppose that  $\A$ is atomic, and the above meets, with just one meet, namely the one consisting of co atoms, and joins hold, 
then the set of principal ultrafilters form a dense 
subset of the Stone space, because $\A$ is atomic, hence we can find  a principal ultrafilter {\it outside the union of such no where dense
sets } and the representation based on it, will be the required complete representation.

We know that $\A^+$ neatly embeds into an algebra in $\omega$ extra dimensions, 
which is the canonical extension of any algebra 
$\B$ in which $\A$ neatly embeds by representability.
So like we did before, we take the subalgebra of the big algebra generated by $\A^+$, we get the required joins and meets by dimension 
complementedness, but we lose atomicity. Unfortunately, dimension complemented algebras are {\it atomless} except for very trivial cases 
which certainly are not ours. 

This idea of  does not survive here,
because it essentially depended on the fact that the principal utrafilters (corresponding to atoms)
are dense. This holds in $\B^+$, but not in its subalgebra generated by $\A^+$ which is atomless.

However, it does work for other important algebras, namely {\it polyadic algebras} of infinite dimension. Any completely additive atomic 
polyadic algebra is completely representable.
Call an algebra $\B\in \PA_{\beta}$ a $\beta$ dilation of $\A\in \PA_{\alpha}$ if $\A=\Nr_{\alpha}\B$.
What plays a major role here, is that if $\A\in \PA_{\alpha}$ then $\A$ has a $\beta$ dilation $\B$ for any $\beta$,
such that if $\A$ is atomic, then so is $\B$. Furthermore, the joins and meets we want to preserve (concerning infinitary cylindrfiers and substitutions)
are preserved in $\B$,  and the principal ultrafilters in $\B$ which are now dense, lie outside nowhere dense sets, 
so that basically the argument used here works. We note that the notions of weakly representable atom structures that are not 
strongly representable  
for $\PA$s is trivial, since every algebra is
representable, and furthermore every completely additive atomic algebra is completely representable. 
However, these notions make sense for polyadic equality algebras of infinite dimensions.

\begin{theorem} If $\A$ is representable, then $\exists$ can win the games $G^{\kappa+\omega}(\A^+)$ 
hence the canonical extension is completely representable.
\end{theorem}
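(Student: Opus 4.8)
The plan is to transfer the standard step-by-step construction of a complete representation from $\A^+$ up into a suitable \emph{atomic} dilation of $\A^+$, using two features peculiar to polyadic algebras: every algebra in $\PA_\alpha$ has dilations of every dimension $\beta>\alpha$ that stay atomic whenever the original is atomic, and in such a dilation the extra coordinates furnish Henkin witnesses for the cylindrifiers.

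First I would fix the neat-embedding data. Because $\A$ is representable it neatly embeds into some $\D\in\PA_{\alpha+\omega}$, and after replacing $\D$ by the subalgebra it generates over $\A$ we may take $\D$ dimension complemented, so that $\cyl j x=\sum_{i\in(\alpha+\omega)\setminus\Delta x}{\sf s}_i^j x$ for all $x\in D$ and all $j$, and each ${\sf s}_\tau$ is a complete Boolean endomorphism of $\D$. Put $\B=\D^+$. Passing to canonical extensions, $\A^+$ embeds as a \emph{complete} subalgebra of $\Nr_\alpha\B$ (as recorded in the discussion above), the algebra $\B$ is again a polyadic algebra of dimension $\alpha+\omega$, and — here is the point with no cylindric analogue — $\B$ is atomic, so its principal ultrafilters are dense in its Stone space. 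Since $\A^+$ is a complete subalgebra, the displayed joins stay exact for $x$ in the image of $\A^+$, and the co-atom meet $\prod\{-x:x\in\At\A^+\}=0$ is inherited by $\B$.

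Next I would describe \pe's strategy in $G^{\kappa+\omega}(\A^+)$. She plays inside $\B$, keeping at each stage a network $N$ on at most $\kappa+\omega$ nodes whose hyperedges carry atoms of $\B$ lying below the corresponding elements of $\A^+$, coherence being read off from $\B$. On a cylindrifier move — a hyperedge labelled $a$, an index $j$, and a request for a witness on a new node — \pe\ uses a fresh coordinate $i\geq\alpha$ with $i\notin\Delta a$; by exactness of $\cyl j a=\sum_i{\sf s}_i^j a$ the set of ultrafilters of $\B$ that realise no ${\sf s}_i^j a$ below $a$ is a closed nowhere dense set of the form $N_{\cyl j a}\setminus\bigcup_i N_{{\sf s}_i^j a}$, as in the Baire argument above, and the substitution moves (transpositions and replacements) likewise determine only closed nowhere dense sets, each ${\sf s}_\tau$ being completely additive. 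At any single round only finitely many such sets constrain the new label, so their union omits a nonempty open piece of the relevant clopen set; by density of the atoms of $\B$ she picks an atom there, and labels the remaining (possibly infinitely many) new hyperedges one at a time exactly as in the amalgamation arguments of the first section, which go through because the atoms of $\B$ are dense. With the usual bookkeeping that schedules every pair (atom of $\A^+$, node) into some round, \pe\ is never stuck and so wins all $\kappa+\omega$ rounds.

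Finally, a \ws\ for \pe\ in $G^{\kappa+\omega}(\A^+)$ yields, by the customary limit-of-networks argument, an atomic network system on $\A^+$ whose induced homomorphism is a complete representation; alternatively one quotes the characterisation that an atomic completely additive polyadic algebra on which \pe\ wins the $(\kappa+\omega)$-round atomic game is completely representable. Hence $\A^+$, the canonical extension, is completely representable. The step I expect to need real care is the second one: checking that after canonisation $\B=\D^+$ is genuinely atomic and that the suprema $\cyl j x=\sum_i{\sf s}_i^j x$ genuinely stay exact for $x$ coming from $\A^+$ — exactly where the polyadic case parts from the cylindric one — together with arranging the bookkeeping so that a game of uncountable length $\kappa+\omega$ discharges every demand within its rounds.
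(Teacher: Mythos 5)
Your proposal follows the same route as the paper's (very brief) idea: from $\A\subseteq\Nr_\alpha\D$ pass to canonical extensions, so that $\A^+$ sits as a complete subalgebra of $\Nr_\alpha\D^+$, and then run the Baire-category/game argument of the preceding subsection inside the atomic algebra $\D^+$, whose principal ultrafilters are dense. In outline you and the paper agree.

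But the step you yourself flag as needing ``real care'' is a genuine gap, and it is exactly the point on which the whole argument turns. The identity $\cyl jx=\sum_{i\notin\Delta x}{\sf s}_i^jx$ holds in $\D$ because $\D$ is dimension complemented, but it does not transfer to $\B=\D^+$: canonical extensions do not preserve infinite joins of this kind. Concretely, since $\cyl jx\not\leq{\sf s}_{i_1}^jx+\cdots+{\sf s}_{i_m}^jx$ for any finite set of indices (otherwise the join would already be a finite one), the family $\{\cyl jx\}\cup\{-{\sf s}_i^jx:i\notin\Delta x\}$ has the finite intersection property, so there is an ultrafilter of $\D$ --- that is, an atom of $\D^+$ --- lying below $\cyl jx$ and below no ${\sf s}_i^jx$; hence the join is strictly smaller than $\cyl jx$ in $\D^+$. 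Nor can you rederive the identity inside $\D^+$ from dimension complementedness, because $\D^+$ is complete and atomic of infinite dimension and so is not dimension complemented (this is precisely the ``dimension complemented algebras are atomless'' tension described before the theorem). So your \pe\ may face a cylindrifier move on an element $\cyl ja$ for which no coordinate $i$ supplies a witness ${\sf s}_i^ja$ meeting the atom she must choose, and the nowhere-dense-set bookkeeping collapses. The escape the paper intends, for polyadic algebras, is different: one does not canonise a dilation of $\A$, one takes the polyadic (Daigneault--Monk style) dilation of $\A^+$ itself, which by the dilation theory can be chosen atomic whenever $\A^+$ is (and $\A^+$ always is) and which preserves the relevant infinitary joins by construction; alternatively, since $\A^+$ is atomic and, being a complex algebra, completely additive, one can quote directly the statement that every completely additive atomic polyadic algebra is completely representable. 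Your placing of ``the point with no cylindric analogue'' at the atomicity of $\D^+$ (which holds for cylindric algebras just as well) rather than at the survival of these joins is a symptom of the same issue.
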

\begin{demo}{Idea} 
Asume that $\A\subseteq \Nr_{\alpha}\B$, then 
$\A^+$ is a complete subalgebra of $\Nr_{\alpha}\B^+.$
\end{demo}

\subsubsection{Omitting types yet, again}

We note that in the above counterexample, the co-atoms do not form an ultrafilter. If they did, then it is impossible to omit them, 
even in the uncountable case. This follows from the next tremendously deep result of Shelah, 
which gets us back to omitting types. This result says that maximal non-principal types can be always 
omitted, as long as they are $< 2^{\aleph_0}.$

\begin{lemma} Suppose that $T$ is a theory,
$|T|=\lambda$, $\lambda$ regular, then there exist models $\M_i: i<{}^{\lambda}2$, each of cardinality $\lambda$,
such that if $i(1)\neq i(2)< \chi$, $\bar{a}_{i(l)}\in M_{i(l)}$, $l=1,2,$, $\tp(\bar{a}_{l(1)})=\tp(\bar{a}_{l(2)})$,
then there are $p_i\subseteq \tp(\bar{a}_{l(i)}),$ $|p_i|<\lambda$ and $p_i\vdash \tp(\bar{a}_ {l(i)})$ ($\tp(\bar{a})$ denotes the complete type realized by 
the tuple $\bar{a}$).
\end{lemma}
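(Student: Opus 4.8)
The plan is not to reconstruct the argument from scratch but to locate it: the statement is, up to harmless typographical slips ($\chi$ here is ${}^{\lambda}2$, and the subscripts $l(i)$ and $i(l)$ are the same), Shelah's non\nobreakdash-structure/many\nobreakdash-models theorem from \emph{Classification Theory and the Number of Nonisomorphic Models} (the chapters on building $2^{\lambda}$ models of a fixed size by tree constructions), and the only use we make of it is the contrapositive: a \emph{maximal} non\nobreakdash-principal type, having \emph{no} support by a subtype of size $<\lambda$, cannot be one of the types that recur across two of the models $\M_i$, so it is omitted in $2^{\aleph_0}$-many of them. So what follows is an outline of the architecture, with a pointer to where the real work sits.

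First I would fix an enumeration $\langle \varphi_{\xi}:\xi<\lambda\rangle$ of all formulas of the language of $T$ with parameters from a set of size $\lambda$ reserved in advance (this is where $|T|=\lambda$ and the regularity of $\lambda$ enter, for the bookkeeping). Then I would build, by recursion on $\alpha<\lambda$ and along nodes $\eta\in{}^{\alpha}2$, an increasing tree of partial elementary diagrams (equivalently, partial elementary maps into a large saturated model of $T$): at a node $\eta$ one has committed to a $<\lambda$-sized consistent fragment $\Delta_{\eta}$, and at stage $\alpha$ one passes to the two extensions $\eta 0,\eta 1$ by \emph{splitting} on a carefully chosen formula, so that $\Delta_{\eta 0}\cup\Delta_{\eta 1}$ is inconsistent. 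Regularity of $\lambda$ guarantees that after $\lambda$ stages every $\varphi_{\xi}$ has been decided along each branch, so each $i\in{}^{\lambda}2$ determines a complete consistent elementary diagram, hence a model $\M_i\models T$ of cardinality $\lambda$, and distinct branches give non\nobreakdash-elementarily\nobreakdash-embeddable models.

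Second, to extract the clause about types, I would run the splitting so that (a) every tuple $\bar a$ occurring in some $\M_i$ is read off the branch $i$ as the realization of a "generic" pattern introduced at some stage $\alpha(\bar a)<\lambda$, and (b) the splitting formulas are chosen among those that separate complete types, not merely diagrams. Then, if $\bar a_{i(1)}\in\M_{i(1)}$ and $\bar a_{i(2)}\in\M_{i(2)}$ realize the same complete type $p$ with $i(1)\neq i(2)$, the branches $i(1),i(2)$ already part at some level $\alpha_0<\lambda$; the pattern of $\bar a_{i(l)}$ was fixed before $\alpha_0$ together with $<\lambda$ further commitments, and these together already imply $p$. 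This is precisely a subtype $p_i\subseteq \tp(\bar a_{i(l)})$ with $|p_i|<\lambda$ and $p_i\vdash \tp(\bar a_{i(l)})$.

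\textbf{Main obstacle.} The delicate point — and the reason I would cite Shelah rather than write it out — is item (b) together with doing it \emph{uniformly across all $2^{\lambda}$ branches at once}: one must choose the branching formulas so that they genuinely split \emph{types} (so that a recurring type is forced to have been pinned down below the splitting level), while still producing honest models of $T$ of cardinality exactly $\lambda$ and not collapsing the tree. Arranging this in general — without stability hypotheses on $T$ — is exactly what Shelah's coding/indiscernibles machinery for uncountable fragments accomplishes, and reproducing it here would be both lengthy and tangential; for the application above it suffices to invoke the theorem and note that a maximal non\nobreakdash-principal type of size $<2^{\aleph_0}$, admitting no such small support, is therefore omitted in continuum\nobreakdash-many of the $\M_i$.
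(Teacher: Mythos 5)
Your proposal takes essentially the same route as the paper: the paper's entire proof of this lemma is a citation to Shelah (\emph{Classification Theory}, Theorem 5.16 of Chapter IV), and you likewise identify the statement as Shelah's many-models theorem and defer the genuinely hard uniform-splitting argument to that source. Your additional sketch of the tree-of-partial-diagrams architecture goes beyond what the paper records, but the essential move — invoking Shelah rather than reproving him — is identical.
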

\begin{proof} \cite{Shelah} Theorem 5.16, Chapter IV.
\end{proof}
\begin{corollary} For any countable theory, there is a family of $< {}^{\omega}2$ countable models that overlap only on principal types
\end{corollary}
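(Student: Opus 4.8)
The statement to be proved is the corollary: for any countable theory $T$, there is a family of $<{}^{\omega}2$ (i.e. continuum many) countable models that pairwise overlap only on principal types. The plan is to derive this as a direct specialization of the preceding lemma of Shelah, taking $\lambda=\aleph_0$. First I would check that $\aleph_0$ is regular, so the hypothesis $\lambda$ regular is met, and that a countable theory $T$ has $|T|=\lambda=\aleph_0$. Applying the lemma then yields a family $\M_i$, $i<{}^{\omega}2$, each of cardinality $\aleph_0$, hence countable, with the stated overlap property on types. So the only real content is to translate the conclusion of the lemma into the phrase ``overlap only on principal types.''

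The key step is that translation. The lemma says: if $i(1)\neq i(2)$ and $\bar a_{i(1)}\in M_{i(1)}$, $\bar a_{i(2)}\in M_{i(2)}$ realize the same complete type $p$, then $p$ is implied by a subset $p_i$ of size $<\lambda=\aleph_0$, i.e.\ by a finite subset. A complete type implied by a finite subset of itself is, by definition, a principal (isolated) type. Hence any complete type realized in two distinct members of the family is principal; equivalently, the non-principal types realized in $M_{i(1)}$ and in $M_{i(2)}$ are disjoint for $i(1)\neq i(2)$. That is exactly the assertion that the models ``overlap only on principal types.'' I would write this out in one or two sentences, noting that $|p_i|<\aleph_0$ means $p_i$ is finite and that $p_i\vdash p$ with $p_i\subseteq p$ is the definition of $p$ being principal.

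I do not anticipate a genuine obstacle here, since the corollary is a pure specialization; the one point to be careful about is the quantifier ``$\chi$'' appearing in the lemma statement (it should be read as ${}^{\lambda}2$, the index bound), and making sure that the pairwise condition is stated for \emph{all} pairs $i(1)\neq i(2)$ from the full family, which is what the lemma provides. If one wanted to be thorough one could also remark why continuum many such models genuinely exist rather than fewer — but that is already built into the lemma's conclusion, which produces the models $\M_i$ indexed by all of ${}^{\omega}2$ outright, so nothing further is needed. The proof is therefore essentially: invoke the lemma with $\lambda=\aleph_0$, observe finiteness of $p_i$, and read off the definition of principal type.
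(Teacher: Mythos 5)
Your proposal is correct and is exactly the intended derivation: the paper offers no separate proof of the corollary, treating it as the immediate specialization of Shelah's lemma to $\lambda=\aleph_0$, where $|p_i|<\aleph_0$ makes $p_i$ finite and hence any complete type realized in two distinct models of the family is isolated by the conjunction of a finite subset, i.e.\ principal. Nothing further is needed.
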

This following provides a proof  result stated in \cite{Sayed} without a proof, namely, theorem
3.2.9. In fact, it proves a stronger result since it addresses a larger class than $\Nr_n\A_{\omega}$ addressed in the above mentioned result,
namely, the class $S_c\Nr_n\CA_{\omega}.$
The inclusion $\Nr_n\CA_{\omega}\subseteq S_c\Nr_n\CA_{\omega}$ is proper; indeed $\B$ constructed above is in the latter class
because it a complete subalgebra of $\A$, but it is not in the former class.
(This is not trivial, for example its relation algebra analogue is an open problem, see \cite{r}. We will investigate this problem below).

\begin{theorem}\label{uncountable} Let $\A=S_c\Nr_n\CA_{\omega}$. Assume that $|A|=\lambda$, where $\lambda$ is an uncountable
cardinal. Let  $\kappa< {}^{\lambda}2$,
and $(F_i: i<\kappa)$ be a system of non principal ultrafilters.
Then there exists
a set algebra $\C$ with base $U$ such that $|U|\leq \lambda$, $f:\A\to \C$ such that $f(a)\neq 0$ and for all $i\in \kappa$, $\bigcap_{x\in X_i} f(x)=0.$

\end{theorem}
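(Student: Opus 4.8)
The plan is to adapt the Baire-category argument that appears earlier in the excerpt (in the subsection "Omitting types again, via Vaught's theorem") to the uncountable setting, the key new ingredient being that we must work with the given algebra $\A \in S_c\Nr_n\CA_\omega$ directly, rather than with a countable Lindenbaum--Tarski algebra, and that the number of types $\kappa$ is allowed to be as large as ${}^\lambda 2$. First I would fix a dilation: since $\A \in S_c\Nr_n\CA_\omega$, there is $\B \in \CA_\omega$ with $\A$ a \emph{complete} subalgebra of $\Nr_n\B$; replacing $\B$ by the subalgebra it generates over $\A$, I may assume $\B$ is dimension-complemented, so that the crucial identity ${\sf c}_j x = \sum_{i \in \omega \smallsetminus \Delta x} {\sf s}_i^j x$ holds for all $x \in A$, $j < \omega$. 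Because $\A$ is a complete subalgebra of $\Nr_n\B$ and the latter is the full neat reduct, each system $(X_i : i < \kappa)$ with $\prod^{\A} X_i = 0$ still satisfies $\prod^{\B} X_i = 0$, and applying the complete Boolean endomorphisms ${\sf s}_\tau$ for $\tau$ in the weak space $V = {}^{\omega}\omega^{(\mathrm{Id})}$ we get $\prod^{\B} X_{i,\tau} = 0$ for every $\tau \in V$ and every $i < \kappa$.

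Next I would pass to the Stone space $S$ of $\B$ and define, exactly as in the earlier argument, the nowhere dense sets $\bold G_{j,x} = N_{{\sf c}_j x} \setminus \bigcup_{i \notin \Delta x} N_{{\sf s}_i^j x}$ (nowhere dense by dimension-complementedness) and $\bold H_{i,\tau} = \bigcap_{x \in X_i} N_{{\sf s}_\tau x}$ (nowhere dense because $\prod X_{i,\tau} = 0$). The essential point, and the reason the cardinality bound $\kappa < {}^{\lambda}2$ enters, is that although there are $\kappa \cdot |V| = \kappa$ many sets $\bold H_{i,\tau}$, this union can still be "trapped" inside a meager set: here I would invoke the property of $\mathrm{cov}\,K$ — or rather, since $\kappa$ may exceed $\mathrm{cov}\,K$, I expect one must instead use Shelah's theorem (the Lemma of Chapter IV, Theorem 5.16, quoted above) to reduce the system of $\kappa$ non-principal ultrafilters so that only countably many genuinely obstruct, or argue model-theoretically that the relevant union is of first category. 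Granting that $\bold H := \bigcup_{i<\kappa}\bigcup_{\tau \in V} \bold H_{i,\tau}$ together with $\bold G := \bigcup_{j<\omega}\bigcup_{x \in B}\bold G_{j,x}$ is meager, the Baire category theorem gives that $S \smallsetminus (\bold G \cup \bold H)$ is dense, so for any prescribed nonzero $a$ we may pick an ultrafilter $F \in N_a$ avoiding $\bold G \cup \bold H$.

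Finally I would build the representation from $F$ in the standard way: set $f(x) = \{\tau \in {}^{\omega}\omega : {\sf s}_\tau x \in F\}$ for $x \in \A$, landing in the full weak set algebra $\B_a$ with unit $V$. Avoiding $\bold G$ guarantees $f$ respects cylindrifications (so it is a homomorphism, indeed into a $\CA$-reduct and then down to dimension $n$), avoiding $\bold H$ guarantees $\bigcap_{x \in X_i} f(x) = \emptyset$ for each $i < \kappa$, and $F \in N_a$ guarantees $f(a) \neq 0$; taking the disjoint union $\C = \prod_{a \neq 0} \B_a$ over all nonzero $a$ and the diagonal map $x \mapsto (f_a(x) : a \neq 0)$ yields an injection into a set algebra with base of cardinality $\leq \lambda$, preserving all the meets. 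The main obstacle I anticipate is precisely the category computation for the family $\bold H$: with $\kappa$ possibly far larger than $\mathrm{cov}\,K$ and with $\A$ uncountable (so $S$ is not second countable), one cannot merely quote the Baire category theorem for countably many nowhere dense sets, and the delicate step is to use the hypothesis $\kappa < {}^{\lambda}2$ together with Shelah's result on omitting maximal (or here, suitably "independent") non-principal types to ensure the obstruction set does not cover the space — this is where essentially all the work lies, the rest being a routine transcription of the dimension-complemented Baire argument already in the paper.
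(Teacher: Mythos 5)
There is a genuine gap here, and it sits exactly where you say ``this is where essentially all the work lies.'' Your plan is to run the dimension-complemented Baire category argument and then hope that the obstruction set $\bold H\cup\bold G$ can be ``trapped inside a meager set.'' In the present cardinality regime that hope cannot be realized: $\A$ is uncountable, so the Stone space of the dilation is not separable, and the family $\{\bold H_{i,\tau}\}$ has $\kappa$ members with $\kappa$ possibly far above $2^{\aleph_0}$ (anything below $2^{\lambda}$ is allowed). A union of that many nowhere dense sets in such a space is simply not meager in general, and no appeal to ${\rm cov}\,K$ or to second countability is available. Writing ``granting that $\bold H\cup\bold G$ is meager'' assumes the conclusion; the topological framework has to be abandoned, not patched.

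The paper's actual proof is a different, purely model-theoretic counting argument, and it is worth seeing why it works where category does not. One takes the dilation $\B\in\CA_{\omega}$ to be \emph{locally finite} and of the same cardinality $\lambda$ as $\A$ (generated by $A$, using that there are countably many operations), so that $\B\cong\Fm_T$ for a first order theory $T$ with $|T|=\lambda$, and the $F_i$ become maximal non-principal types of $T$ --- maximality is essential and is why the theorem is stated for ultrafilters rather than arbitrary subsets with zero meet. Shelah's theorem (the lemma quoted just before the statement) produces $2^{\lambda}$ models of $T$ that pairwise overlap only on principal types. If no model omitted all the $F_i$, the map sending each of these $2^{\lambda}$ models to the nonempty set of $F_i$'s it realizes would have pairwise disjoint values (a complete type realized in two of the models is principal, and the $F_i$ are not), forcing $\kappa\geq 2^{\lambda}$ and contradicting $\kappa<2^{\lambda}$. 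Some model therefore omits every $F_i$; the induced representation of $\B$ restricts to the required $f$ on $\A$. You correctly sensed that Shelah's theorem is the missing ingredient, but it is not used to make a union meager --- it replaces the Baire argument wholesale with a pigeonhole on a family of $2^{\lambda}$ almost-disjoint models.
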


\begin{proof}\label{omit} Let $\A\subseteq_c \Nr_n\B$, where $\B$ is $\omega$ dimensional, locally finite and has the same cardinality as $\A$.
This is possible by taking $\B$ to be the subalgebra of which $\A$ is a strong neat reduct generated by  $A$, and noting that we gave countably many
operations.
The $F_i$'s correspond to maximal $n$ types in the theory $T$ corresponding to $\B$, that is, the first order theory $T$ such that $\Fm_T\cong \B$.
Assume that ${\bold F}$ be the given set of non principal ultrafilters, with no model omitting them.
Then for all $i<{}^{\lambda}2$, for every representation $\B_i$ of $\B$,
there exists $F$ such that $F$ is realized in $\B_i$. Let $\psi:{}^{\lambda}2\to \wp(\bold F)$, be defined by
$\psi(i)=\{F: F \text { is realized in  }\B_i\}$.  Then for all $i<{}^{\lambda}2$, $\psi(i)\neq \emptyset$.
Furthermore, for $i\neq j$, $\psi(i)\cap \psi(j)=\emptyset,$ for if $F\in \psi(i)\cap \psi(j)$ then it will be realized in
$\B_i$ and $\B_j$, and so it will be principal. But this means that $\|\bold F|=2^{\aleph_0}$ which is not the case.
So there exists a model omitting the given set of maximal non principla types; algebraically 
there exists $f:\B\to \wp(^{\omega}M)$
such that $\bigcap_{x\in F_i} f(x)=\emptyset.$
The restriction of $f$ to $\A$ defined the obvious way is as required.
\end{proof}

Now we give two metalogical readings of the last two theorems. The first is given in \cite{Sayed}, theorem 3.2.10,
but we include it, because it prepares for the other one, which is an entirely new omitting types theorems for cylindric
algebras of sentences. Cylindrifiers in such algebras can be defined because we include individual constants; the number of
these determines the dimension of the algebra in
question, this interpretation was given in  \cite{Amer}; in the context of representing 
algebras of sentences as (full) neat reducts.

\begin{theorem} Let $T$ be an $\L_n$ consistent theory that admits elimination of quantifiers.
Assume that $|T|=\lambda$ is a regular cardinal.
Let $\kappa<2^{\lambda}$. Let $(\Gamma_i:i\in \kappa)$ be a set of non-principal maximal types in $T$. Then there is a model $\M$ of $T$ that omits
all the $\Gamma_i$'s
\end{theorem}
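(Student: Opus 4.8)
The plan is to transfer the omitting types problem for the $\L_n$ theory $T$ into an algebraic statement about the Tarski--Lindenbaum algebra $\Fm_T$ and then apply Theorem~\ref{uncountable}. The first step is to observe that, since $T$ admits elimination of quantifiers, the cylindric algebra of $n$-dimensional formulas $\A=\Fm_T$ sits inside $S_c\Nr_n\CA_\omega$; more precisely one shows that $\A$ is a (complete) neat subreduct of the locally finite $\omega$-dimensional algebra $\Fm_{T}^{(\omega)}$ of all formulas of the theory in the countably many variables $x_0,x_1,\dots$, where quantifier elimination is exactly what guarantees that every $n$-variable formula is, modulo $T$, equivalent to an $n$-variable one and hence that the embedding $\A\hookrightarrow \Nr_n\Fm_T^{(\omega)}$ is onto the full neat reduct. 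This identifies $|A|=\lambda$ with $|T|$, keeps $\lambda$ regular, and realizes the hypotheses of Theorem~\ref{uncountable}.

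**Next I would** translate the maximal non-principal types $\Gamma_i$, $i\in\kappa<2^\lambda$, into systems of ultrafilters of $\A$. For each $i$ put $X_i=\{\phi/T:\phi\in\Gamma_i\}\subseteq A$; because $\Gamma_i$ is a complete (maximal) type it generates an ultrafilter $F_i$ of the Boolean reduct of $\A$, and because it is non-principal (not isolated over $T$) we have $\prod X_i=0$, i.e.\ $\bigcap_{x\in X_i}N_x=\emptyset$ in the Stone space, and $F_i$ is a \emph{non-principal} ultrafilter of $\A$. So the $(F_i:i<\kappa)$ form a system of $\kappa<{}^{\lambda}2$ non-principal ultrafilters of an algebra in $S_c\Nr_n\CA_\omega$ of cardinality $\lambda$, with $\lambda$ regular uncountable --- exactly the input of Theorem~\ref{uncountable}.

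**Then** I apply Theorem~\ref{uncountable} to get a set algebra $\C$ with base $U$, $|U|\le\lambda$, and a homomorphism $f\colon\A\to\C$ with $f(1)\neq 0$ (hence $f$ is an embedding, $\A$ being simple when $T$ is complete, or one restricts to a subdirectly-indecomposable component otherwise) such that $\bigcap_{x\in X_i}f(x)=0$ for every $i<\kappa$. Finally I read this representation back as a model: the base $U$, together with the interpretation of the relation symbols of $\L_n$ induced by $f$ and the individual constants if present, gives a model $\M\models T$; and $\bigcap_{x\in X_i}f(x)=0$ says precisely that no tuple of $\M$ realizes $\Gamma_i$, so $\M$ omits all the $\Gamma_i$ simultaneously. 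The passage from a cylindric set algebra representation to a genuine first-order model is the standard dictionary (the representation respects diagonals and cylindrifications, so the induced structure satisfies $T$), and quantifier elimination is again what lets a representation of the quantifier-free fragment lift to one of all of $T$.

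**The main obstacle** I expect is the very first step: justifying cleanly that $\Fm_T\in S_c\Nr_n\CA_\omega$ with the \emph{complete} (rather than merely dense, or merely $S\Nr$) neat embedding, and that the infimums $\prod X_i=0$ computed in $\A$ are still $0$ in the dilation $\Fm_T^{(\omega)}$ --- this is where quantifier elimination does the real work and where one must be careful that maximality of the types is used to ensure the $F_i$ stay ultrafilters and non-principal after passing to the bigger algebra. Everything after the invocation of Theorem~\ref{uncountable} is the routine algebra-to-model translation and I would not belabour it.
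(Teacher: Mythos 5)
Your proposal follows essentially the same route as the paper: quantifier elimination is used to show that $\Fm_T$ is (onto) the full neat reduct $\Nr_n\Fm_{T_\omega}$ of the locally finite $\omega$-dimensional Tarski--Lindenbaum algebra, the maximal non-principal types become non-principal ultrafilters $X_i$ with $\prod X_i=0$, and Theorem~\ref{uncountable} is then invoked since $\Nr_n\CA_\omega\subseteq S_c\Nr_n\CA_\omega$. The paper compresses all of this into a few lines, so your more explicit treatment of the type-to-ultrafilter translation and the model-reading step fills in exactly what the paper leaves implicit.
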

\begin{demo}{Proof} If $\A=\Fm_T$ denotes the cylindric algebra corresponding to $T$, then since $T$ admits elimination of quantifiers, then
$\A\in \Nr_n\CA_{\omega}$. This follows from the following reasoning. Let $\B=\Fm_{T_{\omega}}$ be the locally finite cylindric algebra
based on $T$ but now allowing $\omega$ many variables. Consider the map $\phi/T\mapsto \phi/T_{\omega}$.
Then this map is from $\A$ into $\Nr_n\B$. But since $T$ admits elimination of quantifiers the map is onto.
The Theorem now follows.
\end{demo}

We now give another natural omitting types theorem for certain uncountable languages.
Let $L$ be an ordinary first order language with
a list $\langle c_k\rangle$ of individual constants
of order type $\alpha$. $L$ has no operation symbols, but as usual, the
list of variables is of order type $\omega$. Denote by $Sn^{L_{\alpha}}$ the set
of all $L$ sentences, the subscript $\alpha$ indicating that we have $\alpha$ many constants Let $\alpha=n\in \omega$.
Let $T\subseteq Sn^{L_0}$ be consistent. Let $\M$  be an $L_0$  model of $T$.
Then any $s:n\to M$ defines an expansion of $\M$ to $L_n$ which we denote by $\M{[s]}$.
For $\phi\in L_n$ let $\phi^{\M}=\{s\in M^n: \M[s]\models \phi\}$. Let
$\Gamma\subseteq Sn^{L_{n}}$.
The question we address is: Is there a model $\M$ of $T$ such that for no expansion $s:n\to M$ we have
$s\in  \bigcap_{\phi\in \Gamma}\phi^{\M}$.
Such an $\M$ omits $\Gamma$. Call $\Gamma$ principal over $T$ if there exists $\psi\in L_n$ consistent with $T$ such that 
$T\models \psi\to \Gamma.$
Other wise $\Gamma$ is non principal over T.

\begin{theorem}  Let $T\subseteq Sn^{L_0}$ be consistent and assume that $\lambda$ is a regular cardinal, and $|T|=\lambda$.
Let $\kappa<2^{\lambda}$. Let $(\Gamma_i:i\in \kappa)$ be a set of non-principal maximal types in $T$.
Then there is a model $\M$ of $T$ that omits
all the $\Gamma_i$'s
That is, there exists a model $\M\models T$ such that there is no $s:n\to \M$
such that   $s\in \bigcap_{\phi\in \Gamma_i}\phi^{\M}$.
\end{theorem}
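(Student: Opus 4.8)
The plan is to reduce the statement to the algebraic omitting types theorem Theorem~\ref{uncountable}, by realizing the Lindenbaum--Tarski algebra of $L_n$-sentences modulo $T$ as a \emph{full} neat reduct; in the sentence-algebra setting this identification is essentially automatic once $T\subseteq Sn^{L_0}$, and so it takes over the role played by quantifier elimination in the preceding theorem. Concretely, first I would fix two algebras. Let $\A=\Fm_T$ be the $\CA_n$ with universe $\{\phi/T:\phi\in Sn^{L_n}\}$, Boolean operations induced by $\lnot,\land$, cylindrifier $\cyl i(\phi/T)$ the class of ``$\exists v\,\phi[c_i:=v]$'' (for $v$ a variable not in $\phi$), and diagonal ${\sf d}_{ij}$ the class of ``$c_i=c_j$'' ($i,j<n$); let $\B=\Fm_{T_{\omega}}$ be the analogous algebra built from $Sn^{L_\omega}$, the $L_0$-sentences that may use all of the constants $c_0,c_1,\dots$. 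Since every sentence uses only finitely many constants, $\B$ is a locally finite $\CA_\omega$, and $|A|=|Sn^{L_n}|=\lambda$ (the relational language has at most $\lambda$ symbols, since $|T|=\lambda$). The map $\phi/T\mapsto\phi/T_\omega$ is a $\CA_n$-embedding $\A\hookrightarrow\Nr_n\B$.

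The crux is that this embedding is onto, i.e. $\A=\Nr_n\B$, whence $\A\in\Nr_n\CA_\omega\subseteq S_c\Nr_n\CA_\omega$. Given $\psi\in Sn^{L_\omega}$ with $\Delta(\psi/T_{\omega})\subseteq n$, let $c_{k_1},\dots,c_{k_r}$ list the finitely many constants of index $\geq n$ occurring in $\psi$. For each $k_j$ we have $k_j\notin\Delta(\psi/T_{\omega})$, i.e. $\psi\equiv_{T_{\omega}}\exists v\,\psi[c_{k_j}:=v]$, and this last sentence uses one fewer constant of index $\geq n$ while still having dimension set $\subseteq n$ (since $\Delta(\cyl k x)\subseteq\Delta x$). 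Iterating $r$ times produces $\sigma\in Sn^{L_n}$ with $\sigma\equiv_{T_{\omega}}\psi$, and since $T$ has no constants, $\sigma\equiv_{T}\psi$; hence $\psi/T_\omega\in\A$. This is where $T\subseteq Sn^{L_0}$ is used: the constants $c_n,c_{n+1},\dots$ are unconstrained by $T$, so a constant appearing only inessentially in $\psi$---exactly what $\Delta(\psi/T_{\omega})\subseteq n$ records---can always be replaced by a bound variable.

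Next, each $\Gamma_i$ is a maximal, hence complete, $n$-type over $T$, so $F_i=\{\phi/T:\phi\in\Gamma_i\}$ is an ultrafilter of $\A$; and ``$\Gamma_i$ non-principal over $T$'' says precisely that $F_i$ has no nonzero lower bound in $\A$, equivalently (a nonzero lower bound of an ultrafilter being an atom) that $F_i$ is a non-principal ultrafilter, $\prod^{\A}F_i=0$. We now have $\A\in S_c\Nr_n\CA_\omega$ with $|A|=\lambda$ regular, $\kappa<2^{\lambda}$, and $(F_i)_{i<\kappa}$ a system of non-principal ultrafilters, so Theorem~\ref{uncountable} supplies a non-degenerate representation $f\colon\A\to\C$, where $\C$ is a cylindric set algebra with base $U$, $|U|\leq\lambda$, such that $\bigcap_{x\in F_i}f(x)=\emptyset$ for every $i<\kappa$. (For $\lambda=\aleph_0$ one instead runs on the same $\A$ the Baire category / $covK$ argument recorded earlier in this section, with the identical conclusion.)

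Finally I would translate back. By the representation/model correspondence for algebras of sentences of \cite{Amer}---again using $T\subseteq Sn^{L_0}$ and the absence of function symbols---the representation $f$ of $\Fm_T$ is induced by a structure $\M\models T$ with universe $U$ for which $\phi^{\M}=f(\phi/T)$ for all $\phi\in Sn^{L_n}$ (if $T$ is not complete, one first passes to a non-degenerate component of the subdirect decomposition underlying $f$). Then for each $i$,
$$\{s\colon n\to M:\ s\in\textstyle\bigcap_{\phi\in\Gamma_i}\phi^{\M}\}=\bigcap_{x\in F_i}f(x)=\emptyset,$$
i.e. $\M$ omits $\Gamma_i$, which is the conclusion. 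I expect the main obstacle to be exactly the identity $\A=\Nr_n\B$: not the three-line dimension-set computation itself, but setting up the sentence-algebra framework carefully enough that this computation is literally valid and that it meshes with the correspondence of \cite{Amer}; everything past that point is the black-box application of Theorem~\ref{uncountable} together with routine Lindenbaum--Tarski bookkeeping.
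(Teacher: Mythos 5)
Your proposal is correct and follows essentially the same route as the paper: form the sentence algebras $\A_n=Sn^{L_n}/T$ and $\A_\omega=Sn^{L_\omega}/T$, show $\A_n\cong\Nr_n\A_\omega$ (you supply the dimension-set/quantifying-out argument the paper only asserts), note the maximal non-principal types give non-principal ultrafilters, and invoke the algebraic omitting types theorem via $\Nr_n\CA_\omega\subseteq S_c\Nr_n\CA_\omega$. Your extra care about the translation back to a model and the $\lambda=\aleph_0$ case only adds detail the paper leaves implicit.
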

\begin{demo}{Proof}
Let $T\subseteq Sn^{L_0}$ be consistent. Let $\M$ be an $\L_0$  model of $T$.
For $\phi\in \Sn^L$ and $k<\alpha$
let $\exists_k\phi:=\exists x\phi(c_k|x)$ where $x$ is the first variable
not occurring in $\phi$. Here $\phi(c_k|x)$ is the formula obtained from $\phi$ by
replacing all occurrences of $c_k$, if any, in $\phi$ by $x$.
Let $T$ be as indicated above, i.e, $T$ is a set of sentences in which no constants occur. Define the
equivalence relation $\equiv_{T}$ on $Sn^L$ as follows
$$\phi\equiv_{T}\psi \text { iff } T\models \phi\equiv \psi.$$
Then, as easily checked $\equiv_{T}$ is a
congruence relation on the algebra
$$\Sn=\langle Sn,\land,\lor,\neg,T,F,\exists_k, c_k=c_l\rangle_{k,l<n}$$
We let $\Sn^L/T$ denote the quotient algebra.
In this case, it is easy to see that $\Sn^L/T$ is a $\CA_n$, in fact is an $\RCA_n$.
Let $L$ be as described above. But now we denote it $L_n$,
the subscript $n$ indicating that we have $n$-many
individual constants. Now enrich $L_{n}$
with countably many constants (and nothing else) obtaining
$L_{\omega}$.
Recall that both languages, now, have a list of $\omega$ variables.
For $\kappa\in \{n, \omega\}$
let $\A_{\kappa}=\Sn^{L_{k}}/{T}$.
For $\phi\in Sn^{L_n}$, let $f(\phi/T)=\phi/{T}$.
Then, as easily checked, $f$ is an embedding  of $\A_{n}$
into $\A_{\omega}$. Moreover $f$ has the additional property that
it maps $\A_{n}$, into (and onto) the neat $n$ reduct of $\A_{\beta}$,
(i.e. the set of $\alpha$ dimensional elements of $A_{\beta}$).
In short, $\A_{n}\cong \Nr_{n}\A_{\omega}$. Now again putting $X_i=\{\phi/T: \phi\in \Gamma_i\}$ and using that the
$\Gamma_i$'s are maximal non isolated, it follows that the
$X_i's$ are non-principal ultrafilters
Since $\Nr_n\CA_{\omega}\subseteq S_c\Nr_n\CA_{\omega}$, then our result follows.\end{demo}

%The problem seems to be hard.  But we think that canonical extensions of representable algebras {\it can} be completely representable on weak units, 
%and that perhaps the squaring technique used above could preserve the particular meet of  co atoms,
%but certainly further research is needed.

%If the co atoms form an ultarfilter, this is not possible:

\section{On a problem of Robin Hirsch}

For cylindric algebras, we take the $n$ neat reducts of algebras in higher dimension, ending up with a $\CA_n$, 
but we can also take {\it relation algebra reducts}, getting instead a relation algebra. 
The class of relation algebra reducts of cylindric algebras of dimension $n\geq 3$, denoted 
by $\Ra\CA_n$. The $\Ra$ reduct of a $\CA_n$, $\A$, is obtained by taking the $2$ neat reduct of $\A$, then defining composition and converse 
using one space dimension.
For $n\geq 4$, $\Ra\CA_n\subseteq \RA$. Robin Hirsch dealt primarily with this class in \cite{r}.   
This class has also been investigated by many authors, 
like Monk, Maddux, N\'emeti and Simon (A chapter in Simon's dissertation is devoted to such a class, 
when $n=3$). 
After a list of results and publications, Simon proved $\Ra\CA_3$ is not closed under subalgebras for $n=3$, 
with a persucor by Maddux proving the cases $n\geq 5$,
and Monk proving the case $n=4$.

In \cite{r}, Hirsch deals only the relation algebras proving that the $\Ra$ reducts of $\CA_k$s, $k\geq 5$, 
is not elementary, and he ignored the $\CA$ case, probably
because  of analogous results proved by the author on neat reducts \cite{IGPL}.

But  the results in these two last  papers are not identical (via a replacement of relation algebra via a cylindric algebra and vice versa).
There are differences and similarities that are illuminating 
for both. For example in the $\RA$  case Hirsch proved that the elementary subalgebra that is not an $\Ra$ reduct 
is not a complete subalgebra of the one that is.
In the cylindric algebra case, the elementary subalgebra that is not a neat reduct 
constructed is a complete subalgebra of the neat reduct.

Hirsch \cite{r} also proved that any $\K$, such that $\Ra\CA_{\omega}\subseteq \K\subseteq S_c\Ra\CA_k$, $k\geq 5$ 
is not elementary;  but using a rainbow construction for cylindric algebras, we prove its $\CA$ analogue.
In the same paper \cite{r}. In op.cit Robin asks whether the inclusion $\Ra\CA_n\subseteq S_c\Ra\CA_n$ is proper, the construction 
in \cite{IGPL}, shows that for $n$ neat reducts, 
it is. 

Besides giving a unified  proof of all cylindric like algebras for finite dimensions, 
we show that the inclusion is proper given that a certain $\CA_n$ term exists. 
(This is a usual first order formula using $n$ variables).
And indeed  using the technique in \cite{IGPL} we prove an analogous result for relation algebras,
answering the above  question of Hirsch's in \cite{r}. We show that there is an $\A\in \Ra\CA_{\omega}$ with a  an elmentary subalgebra
$\B\in S_c\Ra\CA_{\omega}$, that is not in $\Ra\CA_k$ when $\leq 5$. 
In particular, $\Ra\CA_k\subset S_c\Ra\CA_5$, for
$k\geq 5$.

\begin{theorem} Let $\K$ be any of cylindric algebra, polyadic algebra, with and without equality, or Pinter's substitution algebra.
We give a unified model theoretic construction, to show the following:
\begin{enumarab}
\item For $n\geq 3$ and $m\geq 3$, $\Nr_n\K_m$ is not elementary, and $S_c\Nr_n\K_{\omega}\nsubseteq \Nr_n\K_m.$
\item Assume that there exists a $k$-witness. For any $k\geq 5$, $\Ra\CA_k$ is not elementary
and $S_c\Ra\CA_{\omega}\nsubseteq \Ra\CA_k$.
\end{enumarab}
\end{theorem}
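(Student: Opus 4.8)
The plan is to give a single model-theoretic construction, adapting the neat-reduct construction of \cite{IGPL} (and, for (2), Hirsch's relation-algebra construction of \cite{r}) and making it signature-uniform. Using the coloured-(hyper)graph / rainbow machinery of \cite{r}, \cite{hh}, I would build an $\omega$-dimensional algebra $\C$ of cylindric type together with its neat reduct $\A=\Nr_n\C$, atomic. The diagonals and the substitutions indexed by transpositions are definable directly on the coloured-graph atom structures involved, and the algebra is generated by elements whose dimension sets have size $<n$, so that via this reduction the \emph{same} construction simultaneously yields the statement for $\CA$, for $\SC$, for $\PA$ and for $\PEA$. Write $\K_m$ for the class in question in dimension $m$.

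Inside $\A$ I would then isolate, by a L\"owenheim--Skolem-type argument arranged so as to preserve the relevant suprema, a proper subalgebra $\B$ that is simultaneously an \emph{elementary} subalgebra and a \emph{complete} subalgebra of $\A$ --- this being the feature, noted above, that separates the $\CA$ situation from the $\RA$ one of \cite{r}. Since $\B$ is a complete subalgebra of the neat reduct $\A=\Nr_n\C$ with $\C\in\K_\omega$, it lies in $S_c\Nr_n\K_\omega$. The crux is to show $\B\notin\Nr_n\K_m$, indeed $\B\notin S_c\Nr_n\K_m$. For this I would play the pebble game $F^k$ --- the $\omega$-round game with $k$ reusable nodes, in which a winning strategy for $\forall$ rules an algebra out of $S_c\Nr_n\K_k$ --- on the atom structure of $\B$, and show that $\forall$ wins it for a suitable $k$ with $n<k\leq m$; since $\Nr_n\K_m\subseteq S_c\Nr_n\K_m\subseteq S_c\Nr_n\K_k$ this suffices. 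The strategy for $\forall$ is the usual one: drive the play onto a monochromatic red clique whose size the construction forces to exceed the number of available nodes, and extract the finite-versus-infinite clash; this is the only way $\forall$ can win. As $\A=\Nr_n\C\in\Nr_n\K_\omega\subseteq\Nr_n\K_m$ while $\B\equiv\A$ and $\B\notin\Nr_n\K_m$, the class $\Nr_n\K_m$ is not elementary, and $\B$ witnesses $S_c\Nr_n\K_\omega\nsubseteq\Nr_n\K_m$. (To be sure that $\A$ really is a full neat reduct one checks that $\exists$ wins the corresponding neat-embedding hypergraph game $H$ on $\At\A$ --- with its cylindrifier move, its forced amalgamation move and its two-graph amalgamation move, and its constant hyperlabels on short hyperedges --- by answering the cylindrifier move with a red label and using the standard rainbow bookkeeping for the amalgamation moves and the shades-of-yellow labelling of the $(n-1)$-hyperedges.)

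For (2) I would transport this through the $\Ra$-reduct functor, starting from the $\CA_k$ side rather than the $\RA$ side of \cite{r}. The hypothesis that a $k$-witness exists supplies the extra $k$-variable term needed to pin the $\Ra$-reduct down inside a dilation --- i.e.\ to ensure that the element coding composition and converse sees precisely the right neat-reduct elements --- and with it the games $H$ and $F^k$ become games testing membership in $\Ra\CA_\omega$ and in $S_c\Ra\CA_k$. The same dichotomy then yields, for $k\geq 5$, both that $\Ra\CA_k$ is not elementary and, taking the elementary subalgebra complete once more, that $S_c\Ra\CA_\omega\nsubseteq\Ra\CA_k$; in particular $\Ra\CA_k\subset S_c\Ra\CA_5$ for every $k\geq 5$.

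The step I expect to be the main obstacle is reconciling ``$\A$ is a full neat reduct of $\K_\omega$'' with ``$\forall$ wins $F^k$ on $\At\B$'': this forces $\B$ to omit exactly the right atoms of $\A$ while remaining elementarily equivalent to it and complete in it, and controlling this omission is where the cylindric rainbow construction is substantially more delicate than its relation-algebra ancestor in \cite{r} and \cite{hh}. A secondary difficulty is verifying that the whole bookkeeping --- in particular $\exists$'s strategy in $H$ and $\forall$'s in $F^k$ --- is insensitive to the presence or absence of the diagonal symbols, so that the single construction does serve all of $\CA$, $\SC$, $\PA$ and $\PEA$; this is precisely where the ``generated by elements of dimension $<n$'' observation is used.
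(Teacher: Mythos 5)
Your plan has an internal contradiction that makes its central step impossible. You arrange for $\B$ to be a \emph{complete} subalgebra of $\A=\Nr_n\C$ with $\C\in\K_\omega$, and correctly conclude $\B\in S_c\Nr_n\K_\omega$. But $S_c\Nr_n\K_\omega\subseteq S_c\Nr_n\K_k$ for every $k$ with $n\leq k\leq\omega$ (replace $\C$ by $\Nr_k\C$), so no algebra in $S_c\Nr_n\K_\omega$ can be excluded from $S_c\Nr_n\K_k$; in particular $\forall$ cannot win the game $F^k$ on $\At\B$, since a win for $\forall$ in $F^k$ is precisely what rules out membership in $S_c\Nr_n\K_k$. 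You are therefore simultaneously asserting $\B\in S_c\Nr_n\K_m$ and $\B\notin S_c\Nr_n\K_m$. What the theorem actually requires is the strictly finer separation $\B\notin\Nr_n\K_m$ \emph{while} $\B\in S_c\Nr_n\K_\omega$, i.e.\ a tool that distinguishes \emph{full} neat reducts from complete subalgebras of neat reducts. The rainbow games $H$ and $F^k$ do not see that distinction: they operate at the level of $S_c\Nr_n$ and of elementary equivalence, which is why the paper uses them for the $S_c\Nr_n\CA_{n+2}$ result and for complete representability, not here.

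The paper's proof is an entirely different, cardinality-based construction. One builds an uncountable homogeneous model with quantifier elimination (by Fra\"{\i}ss\'e amalgamation) whose term set algebra $\A$ is automatically a full neat reduct of an $\omega$-dimensional locally finite algebra; the Boolean reduct of $\A$ is a finite product of uncountable relativizations $\A_u$ indexed by a finite atom structure, and $\A$ is quantifier-free interpretable in this product expanded by finitely many constants. Replacing one factor by a countable elementary Boolean subalgebra gives $\B$, which is an elementary \emph{and} complete subalgebra of $\A$, hence lies in $S_c\Nr_n\K_\omega$ and is elementarily equivalent to a full neat reduct. The $k$-witness term $\tau_k$ (for part (2)), or the terms $\tau_4,\tau$ of Lemma \ref{term} (for part (1)), are chosen so that in any genuine member of $\Nr_n\K_m$ (resp.\ $\Ra\CA_k$) their value on the distinguished components must be uncountable, while in $\B$ that value is trapped below the countable twisted component --- a cardinality clash invisible to first-order logic but forced by membership in $\Nr_n\K_m$. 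This is where the $k$-witness hypothesis is genuinely used; in your outline it plays no operative role. To salvage a game-theoretic route you would need a game characterizing full neat reducts as opposed to $S_c$-neat reducts, which $F^k$ is not.
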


We construct the desired algebra an $\Ra$ reduct of a cylindric algebra. The idea is to 
use an uncountable cylindric algebra $\A\in \Nr_3\CA_{\omega}$, hence $\A$ is representable, together
with a finite atom structure of another simple cylindric algebra, that is also representable.

The former algebra will be a set algebra based on a homogeneous model, that admits elimination of quantifiers
(hence will be a full neat reduct).

Such a model is constructed using  Fraisse's methods of building models by amalgamating smaller parts.
%The latter will also be a set algebra.
The idea is the smae idea that we used before. 
The Boolean reduct of $\A$ can be viewed as a finite direct product of the of disjoint Boolean relativizations of $\A$.
Each component will be still uncountable; the product will be indexed by the elements of the atom structure.
The language of Boolean algebras can now be expanded by constants also indexed by the atom structure,
so that $\A$ is first order interpretable in this expanded structure $\P$ based on the finite Boolean product.
The interpretation here is one dimensional and quantifier free.

The $\Ra$ reduct of $\A$ be as desired; it will be a full $\Ra$ reduct of a full neat reduct of an $\omega$ 
dimensional algebra, hence an $\Ra$ reduct
of an $\omega$ dimensional algebra, and it has a
complete elementary equivalent subalgebra not in
$\Ra\CA_k$. (This is the same idea for $\CA$, but in this case, and the other cases of its relatives, one spare dimension suffices.)

This {\it elementary subalgebra} is obtained from $\P$, by replacing one of the components of the product with an elementary
{\it countable} Boolean subalgebra, and then giving it the same interpretation.
First order logic will not see this cardinality twist, but a suitably chosen term
$\tau_k$ not term definable in the language of relation algebras will, witnessing that the twisted algebra is not in $\Ra\CA_k$.

\begin{definition}
Let $k\geq 4$. A $k$ witness $\tau_k$ is $m$-ary term of $\CA_k$ with rank $m\geq 2$ such 
that $\tau_k$ is not definable in the language of relation algebras (so that $k$ has to be $\geq 4$)
and for which there exists a term $\tau$ expressible in the language of relation algebras, such that
$\CA_k\models \tau_k(x_1,\ldots x_m)\leq \tau(x_1,\ldots x_m).$ (This is an implication between two first order formulas using $k$-variables).

Furthermore, whenever $\A\in {\bf Cs}_k$ (a set algebra of dimension $k$) is uncountable,
and $R_1,\ldots R_m\in A$  are such that at least one of them is uncountable,
then $\tau_k^{\A}(R_1\ldots R_m)$ is uncountable as well.
\end{definition}

%For $u_1\ldots u_m\in {}^33$ and $\sigma$ a relation algebra term
%we write $\sigma(u_1\ldots u_l)$ for $\sigma^{\R}(\{u_1\},\ldots \{u_n\})$,
%the latter evaluated in the group relation algebra $\R=\Cm S_3$.
The following lemma, is available in \cite{Sayed} with a sketch of proof; it is fully 
proved in \cite{MLQ}. If we require that a (representable) algebra be a neat reduct,
then quantifier elimination of the base model guarantees this, as indeed illustrated in our fully proved next lemma.

\begin{lemma} Let $V=(\At, \equiv_i, {\sf d}_{ij})_{i,j<3}$ be a finite cylindric atom structure,
such that $|\At|\geq |{}^33.|$
Let $L$ be a signature consisting of the unary relation
symbols $P_0,P_1,P_2$ and
uncountably many tenary predicate symbols.
For $u\in V$, let $\chi_u$
be the formula $\bigwedge_{u\in V}  P_{u_i}(x_i)$.
Then there exists an $L$-structure $\M$ with the following properties:
\begin{enumarab}

\item $\M$ has quantifier elimination, i.e. every $L$-formula is equivalent
in $\M$ to a boolean combination of atomic formulas.

\item The sets $P_i^{\M}$ for $i<n$ partition $M$, for any permutation $\tau$ on $3,$
$\forall x_0x_1x_2[R(x_0,x_1,x_2)\longleftrightarrow R(x_{\tau(0)},x_{\tau(1)}, x_{\tau(2)}],$

\item $\M \models \forall x_0x_1(R(x_0, x_1, x_2)\longrightarrow
\bigvee_{u\in V}\chi_u)$,
for all $R\in L$,

\item $\M\models  \exists x_0x_1x_2 (\chi_u\land R(x_0,x_1,x_2)\land \neg S(x_0,x_1,x_2))$
for all distinct tenary $R,S\in L$,
and $u\in V.$

\item For $u\in V$, $i<3,$
$\M\models \forall x_0x_1x_2
(\exists x_i\chi_u\longleftrightarrow \bigvee_{v\in V, v\equiv_iu}\chi_v),$

\item For $u\in V$ and any $L$-formula $\phi(x_0,x_1,x_2)$, if
$\M\models \exists x_0x_1x_2(\chi_u\land \phi)$ then
$\M\models
\forall x_0x_1x_2(\exists x_i\chi_u\longleftrightarrow
\exists x_i(\chi_u\land \phi))$ for all $i<3$
%\item For all $r_1, r_2\in G$, $\M\models \exists x_2[\exists x_0(R_{r_1}(x_0, x_1)\land x_0=x_1)\land
%\exists x_1(R_{r_2}(x_0, x_1)\land x_0=x_2)\longleftrightarrow R_{r_1+r_2}(x_0, x_1)]$
%\item $\forall x_0x_1x_2[ \phi_{\tau_k}(\chi_{u_1},\ldots \chi_{u_m})(x_0,x_1,x_2)\longrightarrow \chi_{\tau(u_1,\ldots u_{m})}(x_0,x_1,x_2)]$

\end{enumarab}
\end{lemma}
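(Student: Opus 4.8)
The plan is to build the structure $\M$ by a Fra\"iss\'e-style amalgamation construction, so that $\M$ is the (countable, ultrahomogeneous) limit of a suitable class of finite $L$-structures; quantifier elimination (item (1)) is then automatic from ultrahomogeneity together with the fact that the class has only finitely many isomorphism types of one-element substructures (the colours $P_0,P_1,P_2$) and is $\forall$-axiomatizable. First I would fix the signature $L$ as stated ($P_0,P_1,P_2$ unary, and uncountably many ternary symbols $R$; here one must be a little careful, since Fra\"iss\'e limits of finite structures are naturally countable, so in fact I would first run the construction with countably many ternary symbols to get a countable $\M_0$ with quantifier elimination, and then note that the uncountable version is obtained either by taking an elementary extension realizing uncountably many ``independent'' ternary relations, or — cleaner — by defining the class $\mathbf{K}$ of finite structures in the full uncountable language but insisting each finite structure uses only finitely many of the ternary symbols; the limit of this class over the uncountable index set is still built by a chain of finite steps and still has q.e., by the usual argument that every embedding between finite substructures extends).

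The key combinatorial content is encoding the atom structure $V=(\At,\equiv_i,{\sf d}_{ij})_{i,j<3}$ into the amalgamation class. Concretely, $\mathbf{K}$ consists of finite $L$-structures $N$ in which: (a) $P_0^N,P_1^N,P_2^N$ partition $N$; (b) each ternary symbol $R$ is totally symmetric, i.e. invariant under all permutations $\tau$ of its arguments (item 2); (c) whenever $R(a_0,a_1,a_2)$ holds, the colour-triple $(i_0,i_1,i_2)$ with $a_j\in P_{i_j}$ lies in $V$ (item 3) — this is exactly where the forbidden/allowed triangles of the atom structure get translated into a constraint on which monochromatic-type triples may carry an $R$-edge; and (d) for each triple $u\in V$, on the sub-structure of points of the prescribed colours the ternary symbols behave ``freely'' (any Boolean combination of them consistent with (b),(c) can occur), which is what will give items (4) and (6). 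One checks $\mathbf{K}$ has HP, JEP and AP; AP is the only nontrivial one and it is the usual ``free amalgamation over a common colour-respecting substructure, adding no new $R$-edges'' argument — free amalgamation works precisely because condition (c) is a restriction of the form ``an $R$-edge on colours $\notin V$ is forbidden'', which is preserved by gluing without new edges. Let $\M$ be the limit (for the uncountable-signature version, the limit of the directed system).

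From here the six items follow in order. Quantifier elimination (1) is the standard consequence of ultrahomogeneity for a class with finitely many $1$-types. Item (2) and the partition part of the list in (2) are built into $\mathbf{K}$. Item (3) is condition (c). Item (4): given distinct ternary $R,S$ and $u\in V$, the three-element structure of the right colours with $R$ holding and $S$ failing lies in $\mathbf{K}$ by condition (d), hence embeds into $\M$ by the extension property, giving the required $\exists x_0x_1x_2$. Item (5) is a purely atom-structure fact combined with the extension property: $\exists x_i\chi_u$ holds of a point-pair iff that pair extends to a triple of some colour $v$ with $v\equiv_i u$, and every such $v\in V$ is actually realized by the extension property; I would prove both inclusions, the ``$\Leftarrow$'' being immediate from $\chi_v\models\exists x_i\chi_u$ at the level of colours (using $v\equiv_i u$), the ``$\Rightarrow$'' using that the colour of the $i$-th coordinate can only be one for which the resulting triple lies in $V$, i.e. $v\equiv_i u$ for some such $v$. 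Item (6) is the crucial homogeneity statement: if some realization of $\chi_u$ satisfies $\phi$, then by quantifier elimination $\phi$ is (on $\chi_u$) a Boolean combination of atomic formulas, and by condition (d) together with ultrahomogeneity, any partial realization of $\chi_u$ extending consistently to satisfy $\phi$ can be completed — so $\exists x_i\chi_u$ and $\exists x_i(\chi_u\wedge\phi)$ define the same set; I would phrase this via a back-and-forth / amalgamation argument, extending a witness of $\exists x_i\chi_u$ to a witness of the conjunction by embedding an appropriate member of $\mathbf{K}$.

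The main obstacle I expect is twofold: first, getting amalgamation to genuinely respect the constraint (c) while still being rich enough for (d) — one must choose the ``free'' behaviour on same-coloured triples carefully so that free amalgamation never forces a forbidden edge, and verifying this is where the translation ``forbidden triangles of the atom structure $\leftrightarrow$ non-existence of certain triples'' does its work; second, handling the uncountability of the ternary signature cleanly, since the bare Fra\"iss\'e theorem produces a countable structure — the fix is routine (work with finite approximations inside the big signature, or build the Fra\"iss\'e limit over each countable sub-signature and take a direct limit, checking q.e.\ is preserved because every formula uses finitely many symbols), but it must be stated with some care so that the ``uncountably many ternary symbols, all pairwise distinguishable by (4)'' requirement is actually met.
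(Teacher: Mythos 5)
Your overall strategy --- a Fra\"iss\'e/generic-model construction in which the atom structure $V$ is encoded as the set of admissible colour-triples for the ternary predicates --- is exactly the approach the paper indicates (the paper gives no proof beyond a citation to \cite{MLQ}, but describes $\M$ as a homogeneous model with quantifier elimination built ``using Fraisse's methods of building models by amalgamating smaller parts''). However, your argument for item (6) has a genuine gap. You reduce $\phi$ on $\chi_u$ to a Boolean combination of atomic formulas and then argue that ``any partial realization of $\chi_u$ extending consistently to satisfy $\phi$ can be completed.'' But item (6) asserts the biconditional $\forall \bar x\,(\exists x_i\chi_u\leftrightarrow\exists x_i(\chi_u\wedge\phi))$ for \emph{every} tuple realizing $\exists x_i\chi_u$, not only for those that ``extend consistently.'' Genericity of the construction only gives you freedom in choosing the new point $x_i$; it cannot repair constraints that $\phi$ places on the coordinates $x_j$ for $j\neq i$. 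Concretely, take $u$ with $u_0=u_1$, $\phi:=(x_0=x_1)$ and $i=2$: then $\exists x_0x_1x_2(\chi_u\wedge\phi)$ holds, and $\exists x_2\chi_u$ holds at every pair $(a,b)$ of the prescribed colours, but $\exists x_2(\chi_u\wedge x_0=x_1)$ fails whenever $a\neq b$. The same difficulty arises for atomic $\phi$ in which $x_i$ does not occur, e.g.\ $\phi:=R(x_0,x_1,x_1)$ with $i=2$. So either the statement must be read with an implicit restriction on $\phi$ (in the intended application $\phi$ is, modulo $\chi_u$, a Boolean combination of full atoms $R(x_0,x_1,x_2)$ in which $x_i$ genuinely occurs, with the equality and variable-identification cases handled separately), or your proof must say explicitly how those cases are dealt with; as written, this step fails.

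A second, smaller gap is the derivation of quantifier elimination. With uncountably many ternary symbols there are uncountably many quantifier-free $3$-types, so the observation that the set defined by $\exists x\,\psi$ is a union of orbits (equivalently, of complete quantifier-free types) does not by itself produce a \emph{finite} Boolean combination of atomic formulas. What is needed is that for each finite sub-signature $L_0$ the reduct of $\M$ to $L_0$ is itself ultrahomogeneous, so that satisfaction of $\exists x\,\psi$ (with $\psi$ quantifier-free over $L_0$) depends only on the $L_0$-atomic type of the parameters. This is precisely where your ``freeness'' condition (d) must be invoked: the ternary predicates have to be mutually generic over the colouring, so that forgetting all but finitely many of them preserves homogeneity. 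Your direct-limit fix is the right idea, but the assertion that quantifier elimination ``is preserved'' needs this reduct-homogeneity argument spelled out rather than asserted.
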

\begin{proof}\cite{MLQ}
\end{proof}

\begin{lemma}\label{term}
\begin{enumarab}

\item For $\A\in \CA_3$ or $\A\in \SC_3$, there exist
a unary term $\tau_4(x)$ in the language of $\SC_4$ and a unary term $\tau(x)$ in the language of $\CA_3$
such that $\CA_4\models \tau_4(x)\leq \tau(x),$
and for $\A$ as above, and $u\in \At={}^33$,
$\tau^{\A}(\chi_{u})=\chi_{\tau^{\wp(^nn)}(u).}$

\item For $\A\in \PEA_3$ or $\A\in \PA_3$, there exist a binary
term $\tau_4(x,y)$ in the language of $\SC_4$ and another  binary term $\tau(x,y)$ in the language of $\SC_3$
such that $PEA_4\models \tau_4(x,y)\leq \tau(x,y),$
and for $\A$ as above, and $u,v\in \At={}^33$,
$\tau^{\A}(\chi_{u}, \chi_{v})=\chi_{\tau^{\wp(^nn)}(u,v)}.$

%\begin{enumarab}

\end{enumarab}
\end{lemma}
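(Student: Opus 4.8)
The plan is to exhibit the terms explicitly and then to reduce the displayed identities to the single observation that the characteristic elements $\chi_u$ generate, inside $\A$, a faithful copy of the small set algebra $\wp({}^33)$ in the relevant signature.

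First I would fix the terms. For part (i) take $\tau_4(x)$ to be the $\SC_4$-term that realises the transposition substitution ${\sf p}_{01}$ using the spare index $3$ as scratch, e.g. $\tau_4(x)={\sf s}^0_3{\sf s}^1_0{\sf s}^3_1 x$, and take $\tau(x)={\sf c}_0{\sf c}_1 x$, which lies in the language of $\CA_3$. For part (ii) take the analogous binary term, for instance $\tau_4(x,y)=({\sf s}^0_3{\sf s}^1_0{\sf s}^3_1 x)\cdot y$ with bounding term $\tau(x,y)=({\sf c}_0{\sf c}_1 x)\cdot y$ in the language of $\SC_3$ (one may instead use the relation-algebra-style composition term ${\sf c}_2({\sf s}^1_2 x\cdot{\sf s}^0_2 y)$ rewritten with an extra index). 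The inequalities $\CA_4\models\tau_4(x)\le\tau(x)$ and $\PEA_4\models\tau_4(x,y)\le\tau(x,y)$ are routine equational consequences of the axioms, most transparently seen by evaluating in $\wp({}^44)$: if $s\circ[0,1]\in x$ then $s$ and $s\circ[0,1]$ agree off $\{0,1\}$, so $s\in{\sf c}_0{\sf c}_1 x$; each substitution step only moves the element within a bounded set of dimensions.

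Next comes the structural heart. I claim the assignment $\iota(W)=\bigvee_{u\in W}\chi_u$ for $W\subseteq{}^33$ is an embedding of $\wp({}^33)$ into $\A$ in the appropriate type. Booleanness is immediate from property (2) of the preceding lemma, which makes the $P_i^{\M}$, hence the $\chi_u$, a partition of the unit of $\A$; injectivity uses property (4), which forces each $\chi_u\neq 0$. For the cylindric operations property (5) gives ${\sf c}_i\chi_u=\bigvee_{v\equiv_i u}\chi_v=\iota\bigl({\sf c}_i^{\wp({}^33)}\{u\}\bigr)$, whence ${\sf c}_i\iota(W)=\iota({\sf c}_i^{\wp({}^33)}W)$ by additivity; the diagonals satisfy ${\sf d}_{ij}=\bigvee_{u_i=u_j}\chi_u$ directly from $\chi_u=\bigwedge_{k<3}P_{u_k}(x_k)$; the remaining operations (replacements for $\SC$, transpositions ${\sf p}_{ij}\chi_u=\chi_{u\circ[i,j]}$ for $\PEA$) act on the $\chi_u$ by the rule read off from the same shape of $\chi_u$, using quantifier elimination (property (1)) to pass from the formula level to $\A$. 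In particular the $\chi_u$-span is a subalgebra and $\iota$ is an isomorphism onto it. Given this, the displayed identities are just the statement that $\iota$ is a homomorphism, applied to the single term $\tau$: $\tau^{\A}(\chi_u)=\tau^{\A}(\iota\{u\})=\iota(\tau^{\wp({}^33)}\{u\})=\chi_{\tau^{\wp({}^nn)}(u)}$, with $\chi_{\tau^{\wp(^nn)}(u)}$ read as $\iota$ of the (possibly non-atomic) value $\tau^{\wp({}^33)}(\{u\})$, and similarly in the binary case.

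I expect the main obstacle to be bookkeeping rather than conceptual: pinning down the substitution conventions so that $\tau_4$ genuinely computes the intended transposition via the scratch index, and checking carefully, with quantifier elimination and properties (5)--(6), that the $\chi_u$-span is closed under all the operations and meets $\wp({}^33)$ exactly. The domination inequality and the homomorphism property are then mechanical. The one genuinely substantive feature of these terms, that $\tau_4$ is not expressible in the language of relation algebras, is not needed for this lemma; it enters only in the downstream notion of a $k$-witness and is recorded there separately.
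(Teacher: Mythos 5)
Your structural observation is sound: the $\chi_u$ do span a copy of $\wp({}^33)$ inside $\A$ (by properties (2), (4), (5) of the model $\M$), and once the terms are fixed the displayed identities reduce to evaluating $\tau$ in that small algebra. The verification you sketch is essentially the one the paper leaves implicit. The problem is your choice of $\tau$. Taking $\tau(x)={\sf c}_0{\sf c}_1x$ gives $\tau^{\A}(\chi_u)=\bigvee\{\chi_v: v_2=u_2\}$, a join of \emph{nine} blocks, and you silently weaken the conclusion to accommodate this ("read as $\iota$ of the possibly non-atomic value"). That reading guts the lemma: in the theorem where it is applied, one replaces the single component $\A_{\tau(u)}$ by a countable elementary subalgebra and derives a contradiction from the fact that the uncountable element $\tau_4^{\D}(f(\chi_u))$ is squeezed \emph{below one countable block} $f(\chi_{\tau(u)})$. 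If $\tau^{\A}(\chi_u)$ is a union of nine blocks, eight of which remain uncountable, no contradiction arises. The bound must act as a \emph{function on atoms}: the paper's $\tau(x)={\sf s}_1^0{\sf c}_1x\cdot{\sf s}_0^1{\sf c}_0x$ does exactly this, since on a rectangular atom $\chi_{(u_0,u_1,u_2)}$ it evaluates to the single atom $\chi_{(u_1,u_0,u_2)}$, while still dominating ${}_3{\sf s}(0,1)x$ by a standard $\CA_4$ identity.

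Part (ii) has a second, independent defect. Your binary term is built from the transposition substitution, but in $\PA_3$ and $\PEA_3$ the transposition ${\sf p}_{01}$ is a \emph{primitive} operation, so your $\tau_4(x,y)$ is already term-definable in three dimensions; it therefore cannot witness failure of membership in $\Nr_3\K_4$, which is the entire purpose of these terms (and is why the lemma switches to binary terms for the polyadic signatures in the first place). The paper's choice is a composition-style term $\tau_4(x,y)={\sf c}_3({\sf s}_3^1{\sf c}_3x\cdot{\sf s}_3^0{\sf c}_3y)$, genuinely requiring the spare index, bounded by $\tau(x,y)={\sf c}_1({\sf c}_0x\cdot{\sf s}_1^0{\sf c}_1y)\cdot{\sf c}_1x\cdot{\sf c}_0y$, which again sends a pair of atoms to a single atom. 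So while your homomorphism framework is the right way to check the identities, both exhibited terms need to be replaced before the argument goes through.
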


\begin{proof}

\begin{enumarab}

\item For all reducts of polyadic algebras, these terms are given in \cite{FM}, and \cite{MLQ}.
For cylindric algebras $\tau_4(x)={}_3 {\sf s}(0,1)x$ and $\tau(x)={\sf s}_1^0{\sf c}_1x.{\sf s}_0^1{\sf c}_0x$.
For polyadic algebras, it is a little bit more complicated because the former term above is definable.
In this case we have $\tau(x,y)={\sf c}_1({\sf c}_0x.{\sf s}_1^0{\sf c}_1y).{\sf c}_1x.{\sf c}_0y$, and 
$\tau_4(x,y)={\sf c}_3({\sf s}_3^1{\sf c}_3x.{\sf s}_3^0c_3y)$.

\item  We omit the construction of such terms. But from now on, we assme that they exist.
\end{enumarab}
\end{proof}

\begin{theorem}
\begin{enumarab}
\item There exists $\A\in \Nr_3\QEA_{\omega}$
with an elementary equivalent cylindric  algebra, whose $\SC$ reduct is not in $\Nr_3\SC_4$.
Furthermore, the latter is a complete subalgebra of the former.

\item Assume that there is $k$ witness. Then there exists a 
relation algebra $\A\in \Ra\CA_{\omega}$, with an elementary equivalent relation algebra not in $\Ra\CA_k$.
Furthermore, the latter is a complete subalgebra of the former.
\end{enumarab}
\end{theorem}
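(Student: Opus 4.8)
The plan is to realize the \emph{big} algebra $\A$ as a set algebra over a homogeneous base model, exhibit its Boolean reduct as a finite product indexed by the finite atom structure ${}^33$, and then perform a cardinality twist on a single Boolean factor which first–order logic cannot detect but the dimension–raising term $\tau_4$ of Lemma \ref{term} can. First I would fix the $L$-structure $\M$ supplied by the quantifier–elimination lemma above (with $V={}^33$ and uncountably many ternary predicate symbols), built by Fra\"iss\'e amalgamation, and let $\A$ be the algebra of $3$-ary relations $\varphi^\M$, $\varphi$ an $L_3$-formula; clauses (5)--(6) of that lemma give closure under cylindrifiers and diagonals and show the transposition substitutions act on $\A$, so $\A$ is a $\QEA_3$. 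Quantifier elimination forces $\A$ to be a \emph{full} neat reduct: letting $\B$ be the analogous algebra of $\omega$-ary relations over $\M$ in the language with $\omega$ variables, the natural map $\A\to\Nr_3\B$ is onto because every $L_\omega$-formula is $\M$-equivalent to a Boolean combination of atomic formulas, i.e. to an $L_3$-formula; hence $\A\cong\Nr_3\B\in\Nr_3\QEA_\omega$. By clause (4) distinct predicate symbols name distinct relations, so $\A$, and in fact each relativisation $\Rl_{\chi_u}\A$ for $u\in V$, is uncountable. Since $V$ is finite and the $\chi_u$ partition the unit, the Boolean reduct of $\A$ is the finite direct product $\prod_{u\in V}\Rl_{\chi_u}\A$; expanding the language of Boolean algebras by constants naming the $\chi_u$ and the diagonals gives a structure $\P$ in which $\A$ is one–dimensional quantifier–free interpretable, precisely because (5)--(6) express cylindrifiers and substitutions blockwise in terms of the $\chi_u$.

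Next I would perform the twist. Arrange the construction so that one block, say $\Rl_{\chi_{u_0}}\A$ with $u_0\in{}^33$ chosen with $u_0(0)\neq u_0(1)$, is atomless, and replace it by a countable atomless Boolean algebra $\D_0$ that is elementarily equivalent to it and is a \emph{regular} (complete) subalgebra of it; leave all other factors unchanged, and call the resulting product–with–constants $\P'$ and the algebra it interprets $\A'$. A finite product of Boolean algebras with one factor swapped for an elementarily equivalent one is elementarily equivalent to the original, so $\P\equiv\P'$ and hence $\A\equiv\A'$ (and the same persists after passing to the $\CA$ or $\SC$ reduct). Since $\D_0\subseteq_c\Rl_{\chi_{u_0}}\A$ and the remaining factors coincide, $\A'$ is a complete subalgebra of $\A$.

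Now suppose towards a contradiction that the $\SC$ reduct of $\A'$ equals $\Nr_3\C$ for some $\C\in\SC_4$. Put $u_1=u_0\circ(0\,1)$, so $u_1\neq u_0$; by Lemma \ref{term} there is a unary $\SC_4$-term $\tau_4={}_3{\sf s}(0,1)(x)$ and a $\CA_3$-term $\tau(x)={\sf s}^0_1{\sf c}_1x\cdot{\sf s}^1_0{\sf c}_0x$ with $\SC_4\models\tau_4(x)\leq\tau(x)$, $\tau^{\A'}(\chi_{u_1})=\chi_{u_0}$, and, crucially, $\tau_4$ available in $\C$ (it needs the fourth index $3$, so it is \emph{not} an $\SC_3$-term) and mapping $3$-dimensional elements to $3$-dimensional elements, hence restricting to an operation of $\Nr_3\C=\A'$. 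The term $\tau_4$ realises, on the $\chi_u$-blocks, the coordinate transposition $(0\,1)$, so $x\mapsto\tau_4^{\A'}(x)$ is injective and maps $\Rl_{\chi_{u_1}}\A'$ onto $\Rl_{\chi_{u_0}}\A'$ (it is dominated by $\tau$, and $\tau^{\A'}(\chi_{u_1})=\chi_{u_0}$). But $\Rl_{\chi_{u_1}}\A'$ is uncountable (that block was not twisted), while $\Rl_{\chi_{u_0}}\A'=\D_0$ is countable -- contradiction. Hence the $\SC$ reduct of $\A'$ is not in $\Nr_3\SC_4$, which is (1). Part (2) follows by the same scheme, using one spare dimension: take $\A$ as above (or its $\CA$ reduct), form its $\Ra$ reduct, legitimate since the dilation has dimension $\omega\geq 5$, and note that an $\Ra$ reduct of a full neat reduct of an $\omega$-dimensional algebra is again an $\Ra\CA_\omega$; perform the identical Boolean twist; elementary equivalence and the complete–subalgebra property transfer verbatim; and if the twisted relation algebra lay in $\Ra\CA_k$ we apply the hypothesised $k$-witness $\tau_k$ -- an $m$-ary $\CA_k$-term dominated by a relation–algebra term whose defining cardinality clause (needed here because $\tau_k$, unlike the transposition substitution, is not a bijection) forces the image of the uncountable block to be uncountable while lying inside the single twisted block, the same contradiction.

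The main obstacle is the twist step. One must replace a single Boolean factor by a countable algebra that is simultaneously (a) elementarily equivalent to it, so that $\A\equiv\A'$, and (b) a regular subalgebra of it, so that $\A'\subseteq_c\A$, while the ambient algebra stays uncountable and "thin" in exactly that block so the $\tau_4$-argument bites. Securing (a) and (b) at once is why the construction should be set up so the block $\Rl_{\chi_{u_0}}\A$ is atomless -- one then has to check that a countable atomless regular subalgebra can indeed be extracted -- and one must verify that the finite–product interpretation genuinely transports elementary equivalence and regular–subalgebra–hood from the Boolean level up to the $\QEA$/$\Ra$ level, and that $\tau_4$ (resp. $\tau_k$) really survives into every dilation $\C$ with the bijectivity (resp. cardinality) behaviour claimed. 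This bookkeeping, rather than any single hard idea, is the substance of the proof.
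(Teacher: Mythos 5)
Your proposal follows the paper's proof in all essentials: the homogeneous base model with quantifier elimination, the identification of $\A$ with a full neat reduct of the $\omega$-dimensional algebra of definable relations, the decomposition of the Boolean reduct as the finite product $\prod_{u\in {}^33}\A_u$ with constants $1_u$, ${\sf d}_{ij}$ making $\A$ quantifier-free interpretable in that product, the cardinality twist of a single factor, Feferman--Vaught for elementary equivalence, and the contradiction via the domination $\tau_4(x)\leq\tau(x)$ (injectivity of the transposition substitution in the unary case, the cardinality clause in the definition of a $k$-witness in the relation-algebra case, precisely because $\tau_k$ need not be injective). So the architecture is the paper's.

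The one place you genuinely depart is the twist itself, and that is where your plan has a gap. You propose to arrange that the block $\Rl_{\chi_{u_0}}\A$ be \emph{atomless} and then to extract a countable atomless regular subalgebra of it. But the construction does not deliver atomless blocks: with uncountably many ternary predicates that are pairwise distinguished on each block (clause (4) of the lemma, sharpened to pairwise disjointness in the detailed construction), each relativization $\A_u$ is \emph{atomic}, isomorphic to the finite--cofinite algebra on an uncountable index set; making a block atomless would mean redesigning the base model and re-verifying all clauses of the quantifier-elimination lemma. Moreover, even granting an atomless block, the simultaneous existence of a countable subalgebra that is atomless (hence elementarily equivalent) \emph{and} regular is exactly the step you defer, and it is not automatic for an arbitrary uncountable atomless Boolean algebra. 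The paper sidesteps both problems by keeping the blocks atomic and taking for the twisted factor the subalgebra generated by countably many of its atoms: elementary equivalence and elementarity of that subalgebra come from Tarski's invariants together with the rich automorphism group of $\A$ (every permutation of the atoms of one block fixing the others extends to an automorphism), which is also what feeds Feferman--Vaught in the presence of the diagonal constants. Two smaller points: your claim that $\tau_4$ maps $\Rl_{\chi_{u_1}}\A'$ \emph{onto} the twisted block is neither needed nor justified --- injectivity into it already produces uncountably many elements of $\A'=\Nr_3\C$ below $\chi_{u_0}$; and you should take $u_0$ one-to-one (as the paper does by arranging the image block to be $Id$), so that every ${\sf d}_{ij}$ projects to $0$ on the twisted factor and the constants do not disturb the elementary equivalence of the two factors.
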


\begin{proof} Let $\L$ and $\M$ as above. Let
$\A_{\omega}=\{\phi^M: \phi\in \L\}.$
Clearly $\A_{\omega}$ is a locally finite $\omega$-dimensional ylindric set algebra.
$\P$ denotes the
structure $\prod_{u\in {}V}\A_u$ for the signature of Boolean algebras expanded
by constant symbols $1_{u}$ for $u\in V$
and ${\sf d}_{ij}$ for $i,j\in 3$ as in \cite{Sayedneat}.
For the second part; for relation algebras.
The $\Ra$ reduct of $\A$ is a generalized reduct of $\A$, hence $\P$ (as define above) is first order interpretable in $\Ra\A$, as well.
It follows that there are closed terms and a formula $\eta$ built out of these closed terms such that
$$\P\models \eta(f(a), b, c)\text { iff }b= f(a\circ^{\Ra\A} c),$$
where the composition is taken in $\Ra\A$.
Here $\At$ defined depends on $\tau_k$ and $\tau$, so we will not specify it any further,
we just assume that it is finite.

As before, for each $u\in \At$, choose any countable Boolean elementary
complete subalgebra of $\A_{u}$, $\B_{u}$ say.
%Thus $\B_{Id}\preceq \A_{Id}$, and is also a complete subalgebra.
Le $u_i: i<m$ be elements in $\At$, and let $v=\tau(u_1,\ldots u_m)$.
Let $$Q=(\prod_{u_i: i<m}\A_{u_i}\times \B_{v}\times \times \B_{\breve{b}}\times \prod_{u\in {}V\smallsetminus \{u_1,\ldots u_m, v, \breve{v}\}}
\A_u), t_j)_{u,v\in {}V,i,j<3}\equiv$$
$$(\prod_{u\in V} \A_u, 1_{u,v}, {\sf d}_{ij})_{u\in V, i,j<3}=\P.$$

Let $\B$ be the result of applying the interpretation given above to $Q$.
Then $\B\equiv \Ra\A$ as relation  algebras, furthermore $\Bl\B$ is a complete subalgebra of $\Bl\A$.
%(Indeed $\B\preceq \A$.
%Therefore $\B\in RCA_3$; it is easy to see that
%$B$ is simple the latter expressible by a first order formula
%so that it is actually a $Cs_3$.)
Now we use essentially the same argument. We force the $\tau(u_1,\ldots u_m)$
component together with its permuted versions (because we have converse) countable;
the resulting algebra will be a complete elementary subalgebra of the original one, but $\tau_k$
will force our twisted countable component to be uncountable, arriving at a contradiction.

In more detail, assume for contradiction that $\B=\Ra\D$ with $\D\in \CA_k$.
Then $\tau_k^{\D}(f(\chi_{u_1}),\ldots f(\chi_{u_n}))$, is uncountable in $\D$.
Because $\B$ is a full $\RA$ reduct,
this set is contained in $\B.$  For simplicity assume that $\tau^{\Cm\At}(u_1\ldots u_m)=Id.$
On the other hand, for $x_i\in B$, with $x_i\leq \chi_{u_i}$, let $\bar{x_i}=(0\ldots x_i,\ldots)$ 
with $x_i$ in the $uth$ place.
Then we have
$$\tau_k^{\D}(\bar{x_1},\ldots \bar{x_m})\leq \tau(\bar{x_1}\ldots \bar{x_m})\in \tau(f(\chi_{u_1}),\ldots f({\chi_{u_m}}))
=f(\chi_{\tau(u_1\ldots u_m)})=f(\chi_{Id}).$$
But this is a contradiction, since  $\B_{Id}=\{x\in B: x\leq \chi_{Id}\}$ is  countable and $f$ is a Boolean isomorphism.
\end{proof}

\end{document}